\numberwithin{equation}{section}
\newtheorem{theorem}{Theorem}[section]
\newtheorem{proposition}[theorem]{Proposition}
\newtheorem{corollary}[theorem]{Corollary}
\newtheorem{lemma}[theorem]{Lemma}
\newtheorem{problem}[theorem]{Problem}
\theoremstyle{definition}
\newtheorem{defn}[theorem]{Definition}
\newcommand{\maj}{{\mathrm {maj}}}
\newcommand{\inv}{{\mathrm {inv}}}
\newcommand{\Des}{{\mathrm {Des}}}
\newcommand{\Val}{{\mathrm {Val}}}
\newcommand{\Rise}{{\mathrm {Rise}}}
\newcommand{\Stir}{{\mathrm {Stir}}}
\newcommand{\Hilb}{{\mathrm {Hilb}}}
\newcommand{\grFrob}{{\mathrm {grFrob}}}
\newcommand{\des}{{\mathrm {des}}}
\newcommand{\rev}{{\mathrm {rev}}}
\newcommand{\NSym}{{\mathbf {NSym}}}
\newcommand{\Sym}{{\mathrm {Sym}}}
\newcommand{\QSym}{{\mathrm {QSym}}}
\newcommand{\SYT}{{\mathrm {SYT}}}
\newcommand{\Frob}{{\mathrm {Frob}}}
\newcommand{\initial}{{\mathrm {in}}}
\newcommand{\Ch}{{\mathrm {Ch}}}
\newcommand{\ch}{{\mathbf {ch}}}
\newcommand{\neglex}{{\mathtt {neglex}}}
\newcommand{\iDes}{{\mathrm {iDes}}}
\newcommand{\shape}{{\mathrm {shape}}}
\newcommand{\symm}{{\mathfrak{S}}}
\newcommand{\QQ}{{\mathbb {Q}}}
\newcommand{\ZZ}{{\mathbb {Z}}}
\newcommand{\OP}{{\mathcal{OP}}}
\newcommand{\FF}{{\mathbb{F}}}
\newcommand{\KK}{{\mathbb{K}}}
\newcommand{\CCC}{{\mathcal{C}}}
\newcommand{\AAA}{{\mathcal{A}}}
\newcommand{\BBB}{{\mathcal{B}}}
\newcommand{\GS}{{\mathcal{GS}}}
\newcommand{\zz}{{\mathbf {z}}}
\newcommand{\xx}{{\mathbf {x}}}
\newcommand{\II}{{\mathbf {I}}}
\newcommand{\TT}{{\mathbf {T}}}
\newcommand{\ii}{{\mathbf{i}}}
\newcommand{\sss}{{\mathbf{s}}}
\newcommand{\dd}{{\mathbf {d}}}
\newcommand{\pib}{\overline{\pi}}
\begin{document}

\title[Ordered set partitions and the 0-Hecke algebra]
{Ordered set partitions and the 0-Hecke algebra}

\author{Jia Huang}
\address
{Department of Mathematics \newline \indent
University of Nebraska at Kearney \newline \indent
Kearney, NE, 68849, USA}
\email{huangj2@unk.edu}

\author{Brendon Rhoades}
\address
{Department of Mathematics \newline \indent
University of California, San Diego \newline \indent
La Jolla, CA, 92093, USA}
\email{bprhoades@math.ucsd.edu}

\begin{abstract}
Let the symmetric group $\symm_n$ act on the polynomial ring 
$\QQ[\xx_n] = \QQ[x_1, \dots, x_n]$ by variable permutation.  The coinvariant algebra
is the graded $\symm_n$-module $R_n := {\QQ[\xx_n]} / {I_n}$, where $I_n$ is the ideal in $\QQ[\xx_n]$
generated by invariant polynomials with vanishing constant term.  
Haglund, Rhoades, and 
Shimozono introduced a new quotient $R_{n,k}$ of the polynomial ring $\QQ[\xx_n]$ depending 
on two positive integers $k \leq n$ which reduces to the classical coinvariant algebra of the symmetric group $\symm_n$ when $k = n$.
The quotient $R_{n,k}$ carries the structure of a graded $\symm_n$-module;
 Haglund et. al. determine its graded isomorphism type and relate it to the Delta Conjecture in the theory of Macdonald 
 polynomials.  We introduce and study a related quotient $S_{n,k}$ of $\FF[\xx_n]$ which carries a graded 
 action of the 0-Hecke algebra
 $H_n(0)$, where $\FF$ is an arbitrary field. 
 We prove 0-Hecke analogs of the results of Haglund, Rhoades, and Shimozono.
 In the classical case $k = n$, we recover earlier results of Huang concerning 
 the 0-Hecke action on the coinvariant algebra.
\end{abstract}

\keywords{Hecke algebra, set partition, coinvariant algebra}
\maketitle

\section{Introduction}
\label{Introduction}

The purpose of this paper is to define and study a 0-Hecke analog
of a recently defined graded module for the symmetric group \cite{HRS}.
Our construction has connections with the combinatorics of ordered set partitions
and the Delta Conjecture \cite{HRW} in the theory of Macdonald polynomials.

The symmetric group $\symm_n$ acts on the polynomial ring
$\QQ[\xx_n] := \QQ[x_1, \dots, x_n]$ by variable permutation.
The corresponding {\em invariant subring} $\QQ[\xx_n]^{\symm_n}$ consists of all $f \in \QQ[\xx_n]$ with $w(f) = f$  
for all $w \in \symm_n$, 
and is generated 
by the elementary symmetric functions $e_1(\xx_n), \dots, e_n(\xx_n)$, where
\begin{equation}
e_d(\xx_n) = e_d(x_1, \dots, x_n) := \sum_{1 \leq i_1 < \cdots < i_d \leq n} x_{i_1} \cdots x_{i_d}.
\end{equation}
The {\em invariant ideal} $I_n \subseteq \QQ[\xx_n]$ is the ideal generated by those invariants
$\QQ[\xx_n]^{\symm_n}_+$ with
vanishing constant term:
\begin{equation}
I_n := \langle \QQ[\xx_n]^{\symm_n}_+ \rangle = \langle e_1(\xx_n), \dots, e_n(\xx_n) \rangle.
\end{equation}
The {\em coinvariant algebra} $R_n := {\QQ[\xx_n]} / {I_n}$ is the corresponding quotient ring.

The coinvariant algebra $R_n$ inherits a graded action of $\symm_n$ from $\QQ[\xx_n]$.
This module is among the most important representations in algebraic and geometric combinatorics.
Its algebraic properties are closely tied to the combinatorics of permutations in $\symm_n$;
let us recall some of these properties.

\begin{itemize}
\item The quotient $R_n$ has dimension $n!$ as a  $\QQ$-vector space.  In fact, E. Artin~\cite{Artin} used Galois theory to prove that the set of `sub-staircase' monomials
$\AAA_n := \{ x_1^{i_1} \cdots x_n^{i_n} \,:\, 0 \leq i_j < j \}$ descends to a basis for $R_n$.
\item A different monomial 
basis $\GS_n$ of $R_n$ was discovered by Garsia and Stanton \cite{GS}.  Given a permutation
$w = w(1) \dots w(n) \in \symm_n$, the corresponding GS monomial basis element is
\begin{equation*}
gs_w := \prod_{w(i) > w(i+1)} x_{w(1)} \cdots x_{w(i)}.
\end{equation*}
\item Chevalley \cite{C} proved that $R_n$ is isomorphic as an ungraded $\symm_n$-module to the regular representation
$\QQ[\symm_n]$.
\item Lusztig (unpublished) and Stanley described the {\em graded} 
$\symm_n$-module structure of $R_n$ using the major index statistic on standard Young tableaux \cite{Stanley}.
\end{itemize}

Let $k \leq n$ be two positive integers.  Haglund, Rhoades, and Shimozono \cite[Defn. 1.1]{HRS} introduced
the ideal $I_{n,k} \subseteq \QQ[\xx_n]$ with generators 
\begin{equation}
I_{n,k} := \langle x_1^k, x_2^k, \dots, x_n^k, e_n(\xx_n), e_{n-1}(\xx_n), \dots, e_{n-k+1}(\xx_n) \rangle
\end{equation}
and studied the corresponding quotient ring $R_{n,k} := {\QQ[\xx_n]} / {I_{n,k}}$.  Since $I_{n,k}$ is homogeneous
and
stable 
under the action of $\symm_n$, the ring $R_{n,k}$ is a graded $\symm_n$-module.
When $k = n$, we have $I_{n,n} = I_n$, so that   $R_{n,n} = R_n$ and we recover the usual 
invariant ideal and coinvariant algebra.

To study $R_{n,k}$ one needs the notion of an {\em ordered set partition} of $[n] := \{1, 2, \dots, n\}$, which is a set partition of $[n]$ with a total order on its blocks.
For example, we have an ordered set partition
\begin{equation*}
\sigma = (25 \mid 6 \mid 134)
\end{equation*}
written in the `bar notation'. The three blocks $\{2,5\}$, $\{6\}$, and $\{1,3,4\}$ are ordered from left to right, and elements of each block are increasing.

Let $\OP_{n,k}$ denote the collection of ordered set partitions of $[n]$ with $k$ blocks.  We have
\begin{equation}
|\OP_{n,k}| = k! \cdot \Stir(n,k),
\end{equation}
where $\Stir(n,k)$ is the (signless) Stirling number of the second kind counting $k$-block
set partitions of $[n]$.
The symmetric group $\symm_n$ acts on $\OP_{n,k}$ by permuting the letters $1,\ldots,n$.  For example, the permutation $w=241365$, written in one-line notation, sends $(25 \mid 6 \mid 134)$ to $(46 \mid 5 \mid 123)$.

Just as the  structure of the classical coinvariant module $R_n$ is controlled by permutations
in $\symm_n$, the structure of $R_{n,k}$ is governed by  the collection $\OP_{n,k}$ of
ordered set partitions of $[n]$ with $k$ blocks \cite{HRS}.  
\begin{itemize}
\item  The dimension of $R_{n,k}$ is $|\OP_{n,k}| = k! \cdot \Stir(n,k)$ \cite[Thm. 4.11]{HRS}.
We have a generalization $\AAA_{n,k}$ of the Artin monomial basis to $R_{n,k}$ \cite[Thm. 4.13]{HRS}.
\item There is a generalization
$\GS_{n,k}$ of the Garsia-Stanton monomial basis to $R_{n,k}$ \cite[Thm. 5.3]{HRS}.
\item The module $R_{n,k}$ is isomorphic as an {\em ungraded} $\symm_n$-representation to $\OP_{n,k}$ \cite[Thm. 4.11]{HRS}.
\item There are explicit descriptions of the {\em graded} $\symm_n$-module structure of $R_{n,k}$
which generalize the work of Lusztig--Stanley \cite[Thm 6.11, Cor. 6.12, Cor. 6.13, Thm. 6.14]{HRS}.
\end{itemize}

Now let $\FF$ be an arbitrary field and let $n$ be a positive integer.  
The 
{\em (type A) $0$-Hecke algebra} $H_n(0)$ 
is the unital associative $\FF$-algebra with generators $\pi_1, \pi_2, \dots, \pi_{n-1}$ and relations
\begin{equation}
\label{zero-hecke-relations}
\begin{cases}
\pi_i^2 =  \pi_i & 1 \leq i \leq n-1, \\
\pi_i \pi_j = \pi_j \pi_i & |i - j| > 1,  \\
\pi_i \pi_{i+1} \pi_i = \pi_{i+1} \pi_i \pi_{i+1} & 1 \leq i \leq n-2.
\end{cases}
\end{equation}
Recall that the symmetric group $\symm_n$ has Coxeter generators $\{s_1, s_2, \dots, s_{n-1} \}$, where
$s_i$ is the adjacent transposition $s_i = (i, i+1)$.
These generators satisfy similar relations as \eqref{zero-hecke-relations} except that $s_i^2=1$ for all $i$. 
If $w \in \symm_n$ is a permutation and $w = s_{i_1} \cdots s_{i_{\ell}}$ is a reduced (i.e., as short as possible)
expression for $w$
in the Coxeter generators $\{s_1, \dots, s_{n-1}\}$, we define 
the 0-Hecke algebra element $\pi_w := \pi_{i_1} \cdots \pi_{i_{\ell}} \in H_n(0)$.  It can be shown that the 
set $\{ \pi_w \,:\, w \in \symm_n \}$ forms a basis for $H_n(0)$ as an $\FF$-vector space, and in particular
$H_n(0)$ has dimension $n!$. 
In contrast to the situation with the symmetric group, the representation theory of the 
0-Hecke algebra is insensitive to the choice of ground field, which motivates our generalization
from $\QQ$ to $\FF$.

The algebra $H_n(0)$ is a deformation of the symmetric group algebra
$\FF[\symm_n]$.  Roughly speaking, whereas in a typical $\FF[\symm_n]$-module the 
generator $s_i$ acts by `swapping' the letters $i$ and $i+1$, in a typical 
$H_n(0)$-module the generator $\pi_i$ acts by `sorting' the letters $i$ and $i+1$.
Indeed, the relations satisfied by the $\pi_i$ are precisely the relations satisfied by bubble sorting operators
acting on a length $n$ list of entries $x_1 \dots x_n$  from a totally ordered alphabet:
\begin{equation}
\pi_i .( x_1 \dots x_i x_{i+1} \dots x_n) := \begin{cases}
x_1 \dots x_{i+1} x_i \dots x_n & x_i > x_{i+1} \\
x_1 \dots x_i x_{i+1} \dots x_n & x_i \leq x_{i+1}.
\end{cases}
\end{equation}

Proving 0-Hecke analogs of  module theoretic results concerning the symmetric group has received
a great deal of recent study in algebraic combinatorics \cite{BBSSZ, Huang, HuangTab, VT}; let us 
recall the 0-Hecke analog of the variable permutation action of $\symm_n$ on a polynomial ring.

Let $\FF[\xx_n] := \FF[x_1, \dots, x_n]$ be the polynomial ring in $n$ variables over the field $\FF$.
The algebra $H_n(0)$ acts on $\FF[\xx_n]$ by the {\em isobaric Demazure operators}:
\begin{equation}\label{Demazure}
\pi_i(f) := \frac{x_i f - x_{i+1} (s_i(f))}{x_i - x_{i+1}},\quad 1\le i\le n-1.
\end{equation}
If $f \in \FF[\xx_n]$ is symmetric in the variables $x_i$ and $x_{i+1}$, then $s_i(f) = f$ and thus $\pi_i(f) = f$.
The isobaric Demazure operators give a 0-Hecke analog of variable permutation.

We also have a 0-Hecke analog of the permutation action of $\symm_n$ on $\OP_{n,k}$.
Let $\FF[\OP_{n,k}]$ be the $\FF$-vector space with basis given by $\OP_{n,k}$.
Then $H_n(0)$ acts on $\FF[\OP_{n,k}]$ by the rule
\begin{equation}
\label{pi-defining-action}
\pi_i . \sigma := \begin{cases}
0 & \text{if $i+1$ appears in a block to the left of $i$ in $\sigma$,} \\
\sigma + s_i.\sigma & \text{if $i+1$ appears in a block to the right of $i$ in $\sigma$.} \\
\sigma & \text{if $i+1$ appears in the same block as $i$ in $\sigma$,} 
\end{cases}
\end{equation}
For example, we have
\begin{align*}
\pi_1 (25 \mid 6 \mid 134) &= 0, \\
\pi_2 (25 \mid 6 \mid 134) &= (25 \mid 6 \mid 134) + (35 \mid 6 \mid 124), \\
\pi_3 (25 \mid 6 \mid 134) &= (25 \mid 6 \mid 134).
\end{align*}
It is straightforward to check that these operators satisfy the relations (\ref{zero-hecke-relations})
and so define a 0-Hecke action on $\FF[\OP_{n,k}]$. 
In fact, this is a special case of an $H_n(0)$-action on generalized ribbon tableaux introduced in~\cite{HuangTab}.
See also the proof of Lemma~\ref{osp-substructure}.

The coinvariant algebra $R_n$ can be viewed as a 0-Hecke module.
Indeed, we may express the operator $\pi_i$ as a composition $\pi_i = \partial_i x_i$, where 
$\partial_i$ is the {\em divided difference operator}
\begin{equation}
\partial_i(f) := \frac{f - s_i(f)}{x_i - x_{i+1}}.
\end{equation}
The ``Leibniz rule''
\begin{equation}
\label{leibniz}
\partial_i(f g) = \partial_i(f) g + s_i(f) \partial_i(g)
\end{equation}
has the consequence
\begin{equation}
\label{leibniz-consequence}
\pi_i(fg) = \partial_i(f) (x_i g) + s_i(f) \pi_i(g)
\end{equation}
for all $1 \leq i \leq n-1$ and all $f, g \in \FF[\xx_n]$.
In particular, if $f$ is a symmetric polynomial then $\pi_i(fg) = f \pi_i(g)$ and
 the invariant ideal 
$I_n \subseteq \FF[\xx_n]$ generated by $e_1(\xx_n), \dots, e_n(\xx_n) \in \FF[\xx_n]$ 
is stable under the action of $H_n(0)$ on $\FF[\xx_n]$.  Therefore,
 the quotient $R_n = {\FF[\xx_n]} / {I_n}$ inherits a 0-Hecke action.
Huang gave explicit formulas for its degree-graded and length-degree-bigraded quasisymmetric 0-Hecke
characteristic \cite[Cor. 4.9]{Huang}.
The bivariant characteristic $\Ch_{q,t}(R_n)$ turns out to be a generating function for the 
pair of Mahonian statistics ($\inv$,  $\maj$) on permutations in $\symm_n$,
weighted by the Gessel fundamental quasisymmetric function $F_{\iDes(w)}$
corresponding to the inverse descent set of $\iDes(w)$ of $w\in\symm_n$ \cite[Cor. 4.9 (i)]{Huang}.

We will study a 0-Hecke analog of the rings $R_{n,k}$ of Haglund, Rhoades, and Shimozono \cite{HRS}.
For $k < n$ the ideal $I_{n,k}$ is not usually stable under the action of $H_n(0)$ on 
$\FF[\xx_n]$, so that the quotient ring $R_{n,k} = {\FF[\xx_n]} / {I_{n,k}}$ does not have the structure of 
an $H_n(0)$-module.  To remedy this situation, we introduce the following modified family of ideals.
Let 
\begin{equation}
h_d(x_1, \dots, x_i) := \sum_{1 \leq j_1 \leq \cdots \leq j_d \leq i} x_{j_1} \cdots x_{j_d} 
\end{equation}
be the complete homogeneous symmetric function of degree $d$ in the variables $x_1, x_2, \dots, x_i$.

\begin{defn}
For two positive integers $k \leq n$, we define a quotient ring 
\begin{equation}
S_{n,k} := {\FF[\xx_n]} / {J_{n,k}}
\end{equation}
where $J_{n,k} \subseteq \FF[\xx_n]$ is the ideal with generators
\begin{equation}
J_{n,k} := \langle h_k(x_1), h_k(x_1, x_2), \dots, h_k(x_1, x_2, \dots, x_n), e_n(\xx_n), e_{n-1}(\xx_n), \dots, e_{n-k+1}(\xx_n) 
\rangle.
\end{equation}
\end{defn}

The ideal $J_{n,k}$ is homogeneous.
We claim that $J_{n,k}$ is stable under the action of $H_n(0)$.  Since $e_d(\xx_n) \in \QQ[\xx_n]^{\symm_n}$
and $h_k(x_1, \dots, x_i)$ is symmetric in $x_j$ and $x_{j+1}$ for $j \neq i$, 
thanks to  Equation~\ref{leibniz-consequence} this reduces to the observation that
\begin{equation}\label{shift}
\pi_i(h_k(x_1, \dots, x_i)) = h_k(x_1, \dots, x_i, x_{i+1}).
\end{equation}
Thus the quotient
$S_{n,k}$ has the structure of a graded $H_n(0)$-module.

It can be shown that $J_{n,n} = I_n$, so that $S_{n,n} = R_n$ is the classical coinvariant module.  
At the other extreme, we have $J_{n,1} = \langle x_1, x_2, \dots, x_n \rangle$, so that 
$S_{n,1} \cong \FF$ is the trivial $H_n(0)$-module in degree $0$.

Let us remark on an analogy between the generating sets of $I_{n,k}$ and $J_{n,k}$ which may 
rationalize the more complicated generating set of $J_{n,k}$.  The {\em defining representation} 
of $\symm_n$ on $[n]$ is (of course) given by $s_i(i) = i+1, s_i(i+1) = i,$ and $s_i(j) = j$ otherwise.
The generators of $I_{n,k}$ come in two flavors:
\begin{enumerate}
\item high degree elementary invariants $e_n(\xx_n), e_{n-1}(\xx_n), \dots, e_{n-k+1}(\xx_n)$, and
\item a homogeneous system of parameters $\{x_1^k, x_2^k, \dots, x_n^k\}$ of degree $k$ whose linear span
is stable under the action of $\symm_n$ and isomorphic to the defining representation.
\end{enumerate}
\begin{equation*}
\xymatrix @C=5pc{
1 \ar@{<->}[r]^{s_1} &2 \ar@{<->}[r]^{s_2}  &\cdots \ar@{<->}[r]^{s_{n-1}} &n  \\
x_1^k \ar@{<->}[r]^{s_1} &x_2^k \ar@{<->}[r]^{s_2} &\cdots \ar@{<->}[r]^{s_{n-1}} &x_n^k}
\end{equation*}

The {\em defining representation} of $H_n(0)$ on $[n]$ is given by
$\pi_i(i) = i+1$ and $\pi_i(j) = j$ otherwise (whereas $s_i$ acts by {\em swapping} at $i$,
$\pi_i$ acts by {\em shifting} at $i$).
The generators of $J_{n,k}$ come in two analogous flavors:
\begin{enumerate}
\item high degree elementary invariants $e_n(\xx_n), e_{n-1}(\xx_n), \dots, e_{n-k+1}(\xx_n)$, and
\item a homogeneous system of parameters $\{h_k(x_1), h_k(x_1, x_2), \dots, h_k(x_1, x_2, \dots, x_n)\}$ 
of degree $k$ whose linear span
is stable under the action of $H_n(0)$ and isomorphic to the defining representation (see \eqref{shift}).
\end{enumerate}
\begin{equation*}
\xymatrixcolsep{3pc}
\xymatrix{
1 \ar[r]^-{\pi_1} &2 \ar[r]^-{\pi_2}  &\cdots \ar[r]^-{\pi_{n-1}} &n  \\
h_k(x_1) \ar[r]^-{\pi_1} &h_k(x_1, x_2) \ar[r]^-{\pi_2} &\cdots \ar[r]^-{\pi_{n-1}} &h_k(x_1, \dots, x_n)}
\end{equation*}

Deferring various definitions to Section~\ref{Background}, let us state
our main results on $S_{n,k}$.

\begin{itemize}
\item  The module $S_{n,k}$ has dimension $|\OP_{n,k}| = k! \cdot \Stir(n,k)$ as an $\FF$-vector space
(Theorem~\ref{hilbert-series}).
There is a basis $\CCC_{n,k}$ for $S_{n,k}$, generalizing the Artin monomial basis of $R_n$.
(Theorem~\ref{standard-monomial-basis}, Corollary~\ref{staircase-standard}).
\item  There is a generalization $\GS_{n,k}$ of the 
 the Garsia-Stanton monomial basis to $S_{n,k}$ (Corollary~\ref{garsia-stanton-basis}).
\item  As an {\em ungraded} $H_n(0)$-module, the quotient $S_{n,k}$ is isomorphic to $\FF[\OP_{n,k}]$ (Theorem~\ref{s-decomposition-theorem}).
\item  As a {\em graded} $H_n(0)$-module, we have explicit formulas for the degree-graded characteristics $\Ch_t(S_{n,k})$ and $\ch_t(S_{n,k})$ and the length-degree-bigraded characteristic
$\Ch_{q,t}(S_{n,k})$ of 
$S_{n,k}$  (Theorem~\ref{s-characteristic-theorem}, Corollary~\ref{s-characteristic-corollary}).
The degree-graded quasisymmetric characteristic $\Ch_t(S_{n,k})$ is symmetric and
 coincides with the graded Frobenius character 
of the $\symm_n$-module $R_{n,k}$ (over $\QQ$).
\end{itemize}

The remainder of the paper is structured as follows.
In {\bf Section~\ref{Background}} we give background and definitions related to compositions,
ordered set partitions, Gr\"obner theory, and the representation theory of 0-Hecke algebras.
In {\bf Section~\ref{Hilbert}} we will prove that the quotient $S_{n,k}$ has dimension $|\OP_{n,k}|$
as an $\FF$-vector space.  We will derive a formula for the Hilbert series of $S_{n,k}$ and 
give a generalization of the Artin monomial basis to $S_{n,k}$.
In {\bf Section~\ref{Garsia}} we will introduce a  family of bases of $S_{n,k}$ which are related
to the classical Garsia-Stanton basis in a unitriangular way when $k = n$.  
In {\bf Section~\ref{Module}} we will use one particular basis from this family to prove that 
the ungraded 0-Hecke structure of $S_{n,k}$ coincides with $\FF[\OP_{n,k}]$.
In {\bf Section~\ref{Characteristic}}
we  derive formulas for the degree-graded quasisymmetric and noncommutative symmetric characteristics
$\Ch_t(S_{n,k})$ and $\ch_t(S_{n,k})$, and the length-degree-bigraded quasisymmetric characteristics $\Ch_{q,t}(S_{n,k})$ of $S_{n,k}$.
In {\bf Section~\ref{Conclusion}} we make closing remarks.

\section{Background}
\label{Background}

\subsection{Compositions}
Let $n$ be a nonnegative integer.
A {\em (strong) composition} $\alpha$ of $n$ is a sequence
$\alpha = (\alpha_1, \dots, \alpha_{\ell})$ of positive integers with $\alpha_1 + \cdots + \alpha_{\ell} = n$.
We call $\alpha_1,\ldots,\alpha_\ell$ the {\em parts} of $\alpha$.
We write $\alpha \models n$ to mean that $\alpha$ is a  composition of $n$.
We also write $|\alpha| = n$ for the {\em size} of $\alpha$ and $\ell(\alpha) = \ell$ for the number  of parts of $\alpha$.

The {\em descent set} $\Des(\alpha)$ of a composition $\alpha = (\alpha_1, \dots, \alpha_{\ell}) \models n$ is the subset of $[n-1]$ given by
\begin{equation}
\Des(\alpha) := \{\alpha_1, \alpha_1 + \alpha_2, \dots, \alpha_1 + \alpha_2 + \cdots + \alpha_{\ell - 1} \}.
\end{equation}
The map $\alpha \mapsto \Des(\alpha)$ gives a bijection from the set of compositions of $n$ to the collection  
of subsets of $[n-1]$.
The {\em major index} of $\alpha = (\alpha_1, \dots, \alpha_{\ell})$
is 
\begin{equation}
\maj(\alpha) := \sum_{i\in\Des(\alpha)} i = (\ell - 1) \cdot \alpha_1 + \cdots + 1 \cdot \alpha_{\ell-1} + 0 \cdot \alpha_{\ell}.
\end{equation}
Given two compositions $\alpha, \beta \models n$, we write $\alpha \preceq \beta$ if 
$\Des(\alpha) \subseteq \Des(\beta)$.  Equivalently, we have $\alpha \preceq \beta$ if the composition $\alpha$
can be formed by merging adjacent parts of the composition $\beta$.  
If $\alpha \models n$, the {\em complement} $\alpha^c \models n$ of $\alpha$ is the unique composition of $n$
which satisfies $\Des(\alpha^c) = [n-1] \setminus \Des(\alpha)$.

As an example of these concepts, let $\alpha = (2,3,1,2) \models 8$.  We have $\ell(\alpha) = 4$. 
The descent set of $\alpha$ is $\Des(\alpha) = \{2,5,6\}$.
The major index is $\maj(\alpha) = 2+5+6 = 3 \cdot 2 + 2 \cdot 3 + 1 \cdot 1 + 0 \cdot 2 = 13$.
The complement of $\alpha$ is $\alpha^c = (1,2,1,3,1) \models 8$ with descent set $\Des(\alpha^c) = \{1,3,4,7\} = [7]\setminus\{2,5,6\}$.

If $\ii = (i_1, \dots, i_n)$ is any sequence of  integers, the {\em descent set}
$\Des(\ii)$ is given by
\begin{equation}
\Des(\ii) := \{1 \leq j \leq n-1 \,:\, i_j > i_{j+1} \}.
\end{equation}
The {\em descent number} of $\ii$ is $\des(\ii) := |\Des(\ii)|$ and the {\em major index} of $\ii$ is $\maj(\ii) := \sum_{j \in \Des(\ii)} j$.
Finally, the {\em inversion number} $\inv(\ii)$ 
\begin{equation}
\inv(\ii) := | \{ (j,j') : 1\le j<j'\le n,\ i_j > i_{j'} \}
\end{equation}
counts the number of inversion pairs in the sequence $\ii$.

If a permutation $w \in \symm_n$ has one-line notation $w = w(1) \cdots w(n)$, we define 
$\Des(w)$, $\maj(w)$, $\des(w)$, and $\inv(w)$ as in the previous paragraph for the sequence $(w(1),\ldots,w(n))$.
It turns out that $\inv(w)$ is equal to the {\em Coxeter length} $\ell(w)$ of $w$, i.e., the length of
a \emph{reduced} (shortest possible) expression for $w$ in the generating set $\{s_1, \dots, s_{n-1}\}$ of 
$\symm_n$, where $s_i$ is the adjacent transposition $(i,i+1)$.  
Moreover, we have $i\in\Des(w)$ if and only if some reduced expression of $w$ ends with $s_i$.
We also let $\iDes(w) := \Des(w^{-1})$ be the descent set of the inverse of the permutation $w$.

The statistics $\maj$ and $\inv$ are equidistributed on $\symm_n$ and their common distribution has
a nice form.    Let us recall the standard $q$-analogs of numbers, factorials, and multinomial coefficients:
\begin{align*}
[n]_q := 1 + q + \cdots + q^{n-1} &   &[n]!_q := [n]_q [n-1]_q \cdots [1]_q  \\
{n \brack a_1, \dots , a_r}_q := \frac{[n]!_q}{[a_1]!_q \cdots [a_r]!_q} 
& &{n \brack a}_q := \frac{[n]!_q}{[a]!_q [n-a]!_q}.
\end{align*}
MacMahon \cite{MacMahon} proved  
\begin{equation}
\sum_{w \in \symm_n} q^{\inv(w)} = \sum_{w \in \symm_n} q^{\maj(w)} = [n]!_q,
\end{equation}
and any statistic on $\symm_n$ which shares this distribution is called {\em Mahonian}.
The joint distribution $\sum_{w \in \symm_n} q^{\inv(w)} t^{\maj(w)}$ of the pair of statistics
$(\inv, \maj)$ is called the {\em biMahonian distribution}.

If $\alpha \models n$ and $\ii = (i_1, \dots, i_n)$ 
is a sequence of  integers of length $n$, we define $\alpha \cup \ii \models n$ 
to be the unique composition of $n$ which satisfies
\begin{equation}
\Des(\alpha \cup \ii) = \Des(\alpha) \cup \Des(\ii).
\end{equation}
For example, let $\alpha = (3,2,3) \models 8$ and let
$\ii = (4,5,0,0,1,0,2,2)$.  We have 
\begin{equation*}
\Des(\alpha \cup \ii) = \Des(\alpha) \cup \Des(\ii) = \{3,5\} \cup \{2,5\} = \{2,3,5\},
\end{equation*}
so that $\alpha \cup \ii = (2,1,2,3)$.
Whenever $\alpha \models n$ and $\ii$ is a length $n$ sequence, we have the 
relation $\alpha \preceq \alpha \cup \ii$.

A {\em partition} $\lambda$ of $n$ is a weakly decreasing sequence $\lambda = (\lambda_1 \geq \cdots \geq \lambda_{\ell})$
of positive integers
which satisfies $\lambda_1 + \cdots + \lambda_{\ell} = n$.  We write
$\lambda \vdash n$ to mean that $\lambda$ is a partition of $n$.  We also write
$|\lambda| = n$ for the {\em size} of $\lambda$ and $\ell(\lambda) = \ell$ for the number of {\em parts}
of $\lambda$.
The {\em (English) Ferrers diagram}
of $\lambda$ consists of $\lambda_i$ left justified boxes in row $i$.

Identifying partitions with Ferrers diagrams, if $\mu \subseteq \lambda$ are a pair of partitions related by 
containment, the {\em skew partition} $\lambda/\mu$ is obtained by removing $\mu$ from $\lambda$.
We write $|\lambda/\mu| := |\lambda| - |\mu|$ for the number of boxes in this skew diagram.
For example, the Ferrers diagrams of $\lambda$ and $\lambda/\mu$ are shown below, where
$\lambda = (4,4,2)$ and $\mu = (2,1)$.

\begin{center}
\begin{small}
\begin{Young}
  & & &  \cr
  & & & \cr
  &
\end{Young}  \hspace{0.2in}
\begin{Young}
,&, & &  \cr
,& & & \cr
&
\end{Young}
\end{small}  
\end{center}

A {\em semistandard tableau} of a skew shape $\lambda/\mu$ is a filling of the Ferrers diagram of $\lambda/\mu$
with positive integers which are weakly increasing across rows and strictly increasing down columns.
A {\em standard tableau} of shape $\lambda/\mu$ is a bijective filling of the Ferrers diagram 
of $\lambda/\mu$ with the numbers $1, 2, \dots, |\lambda/\mu|$ which is semistandard.
An example of a semistandard tableau and a standard tableau
of shape $(4,4,2)/(2,1)$ are shown below.

\begin{center}
\begin{small}
\begin{Young}
 , &, & 1 & 3  \cr
 , & 2 & 2 & 4 \cr
  3& 3
\end{Young}  \hspace{0.2in}
\begin{Young}
,&, & 3 & 4  \cr
,& 1 &5 & 7 \cr
2 & 6
\end{Young}
\end{small}
\end{center}

A {\em ribbon} is an edgewise connected skew diagram which contains no $2 \times 2$ square.
The set of compositions of $n$ is in bijective correspondence with the set of size $n$ ribbons: a composition
$\alpha = (\alpha_1, \dots, \alpha_{\ell})$ corresponds to the ribbon whose $i^{th}$ row from the bottom
contains $\alpha_i$ boxes.  We will identify compositions with ribbons in this way.  For example, the 
ribbon corresponding to $\alpha = (2,3,1)$ is shown on the left below.
\begin{center}
\begin{small}
\begin{Young}
,&, &, &   \cr
,& & & \cr
&
\end{Young}  \hspace{0.2in}
\begin{Young}
,&, &, & 5   \cr
,& 2 & 4 & 6 \cr
1 & 3
\end{Young}
\end{small}  
\end{center}

Let $\alpha \models n$ be a composition.  We define a permutation $w_0(\alpha) \in \symm_n$ as follows.
Starting at the leftmost column and working towards the right, and moving from top to bottom within each column, fill the 
ribbon diagram of $\alpha$ with the numbers $1, 2, \dots, n$ (giving a standard tableau).
The permutation $w_0(\alpha)$ has one-line notation obtained by reading along the ribbon from the bottom row to the top row, proceeding from left to right within each row.  
It can be shown that $w_0(\alpha)$ is the unique left weak Bruhat minimal permutation $w \in \symm_n$ which
satisfies $\Des(w) = \Des(\alpha)$ (cf. Bj\"orner and Wachs~\cite{BjornerWachs}).
For example, if $\alpha = (2,3,1)$, the figure on the above right
shows $w_0(\alpha) = 132465 \in \symm_6$.

\subsection{Ordered set partitions}
As explained in Section~\ref{Introduction},
an {\em ordered set partition} $\sigma$ of size $n$ is a set partition of $[n]$ with a total order on its blocks.
Let $\OP_{n,k}$ denote the collection of ordered set 
partitions of size $n$ with $k$ blocks.  
In particular, we may identify $\OP_{n,n}$ with $\symm_n$.

Also as in Section~\ref{Introduction},
we write an ordered set partition of $[n]$ as a permutation of $[n]$ with bars to separate blocks, such that letters within each block are increasing and blocks are ordered from left to right.
For example, we have
\begin{equation*}
\sigma = (245 \mid 6 \mid 13) \in \OP_{6,3}.
\end{equation*}
The {\em shape} of an ordered set partition $\sigma = (B_1 \mid \cdots \mid B_k)$ is
the composition $\alpha = (|B_1|, \dots, |B_k|)$.  For example, the above ordered set partition has shape $(3,1,2) \models 6$.

If $\alpha \models n$ is a composition, let $\OP_{\alpha}$ denote the collection of ordered set partitions of $n$ with shape $\alpha$.  
Given an ordered set partition $\sigma \in \OP_{\alpha}$, we can also represent $\sigma$ as the pair
$(w, \alpha)$, where $w = w(1) \cdots w(n)$ is the permutation in $\symm_n$ (in one-line notation) obtained by erasing 
the bars in $\sigma$.  For example, the above ordered set partition becomes
\begin{equation*}
\sigma = (245613, (3,1,2)).
\end{equation*}
This notation establishes a bijection between $\OP_{n,k}$ and pairs $(w, \alpha)$ where $\alpha \models n$ 
is a composition with $\ell(\alpha) = k$ and $w \in \symm_n$ is a permutation with $\Des(w) \subseteq \Des(\alpha)$.


We extend the statistic $\maj$ from permutations to ordered set partitions as follows.
Let $\sigma = (B_1 \mid \cdots \mid B_k) \in \OP_{n,k}$ be an ordered set partition represented as a pair $(w, \alpha)$ as above.  
We define the {\em major index} $\maj(\sigma)$ to be the statistic
\begin{equation}
\maj(\sigma) = \maj(w, \alpha) := \maj(w) + \sum_{i \,  : \,  \max(B_i) < \min(B_{i+1})} (\alpha_1 + \cdots + \alpha_i - i).
\end{equation}
For example, if $\sigma = ( 2 4 \mid 5 7 \mid 1 3 6 \mid 8)$, then
\begin{equation*}
\maj(\sigma) = \maj(24571368) + (2 - 1) + (2 + 2 + 3 - 3) = 4 + 1 + 4 = 9.
\end{equation*}

We caution the reader that our definition of
$\maj$ is {\bf not} equivalent to, or even equidistributed with,
the corresponding statistics for ordered set partitions in \cite{RW, HRS} and elsewhere.  
However,
the distribution of our $\maj$ on $\OP_{n,k}$ is the reversal of the distribution of their $\maj$.

The generating function for $\maj$ on $\OP_{n,k}$ may be described as follows.  
Let $\rev_q$ be the operator on polynomials in the variable $q$ which reverses coefficient sequences.
For example, we have
\begin{equation*}
\rev_q(3q^3 + 2q^2 + 1) = q^3 + 2q + 3.
\end{equation*}
The {\em $q$-Stirling number} $\Stir_q(n,k)$ is defined by the recursion
\begin{equation}
\Stir_q(n,k) = \Stir_q(n-1,k-1) + [k]_q \cdot \Stir_q(n-1,k)
\end{equation}
and the initial condition $\Stir_q(0,k) = \delta_{0,k}$, where $\delta$ is the Kronecker delta.

\begin{proposition}
Let $k \leq n$ be positive integers.  We have
\begin{equation}
\sum_{\sigma \in \OP_{n,k}} q^{\maj(\sigma)} = \rev_q( [k]!_q \cdot \Stir_q(n,k)).
\end{equation}
\end{proposition}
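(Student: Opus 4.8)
The plan is to establish the identity by a sign-reversing-free, purely combinatorial argument: exhibit a statistic-preserving bijection between $\OP_{n,k}$ and some set whose generating function is visibly $\rev_q([k]!_q \cdot \Stir_q(n,k))$, or else verify that $\sum_{\sigma \in \OP_{n,k}} q^{\maj(\sigma)}$ satisfies the same recursion and initial conditions as $\rev_q([k]!_q \cdot \Stir_q(n,k))$. I expect the recursive approach to be cleaner. First I would unwind the right-hand side: since $\rev_q$ interacts with the recursion for $\Stir_q(n,k)$ in a controlled way, I would compute that $f(n,k) := \rev_q([k]!_q \cdot \Stir_q(n,k))$ satisfies a recursion of the form $f(n,k) = q^{?} f(n-1,k-1) + (\text{some } q\text{-integer factor}) \cdot f(n-1,k)$, where the exponents come from tracking how reversal shifts degrees (the top degree of $[k]!_q \cdot \Stir_q(n,k)$ must be computed explicitly, e.g. $\binom{k}{2} + (n-k)(k-1)$ or similar). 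The base case $n = k$ reduces to the known statement $\sum_{w \in \symm_n} q^{\maj(w)} = [n]!_q$, which is $\rev_q$-invariant, combined with the fact that $\maj$ on $\OP_{n,n}$ agrees with $\maj$ on $\symm_n$ (no block-boundary correction terms survive when every block is a singleton — wait, when $k=n$ every block is a singleton, so the sum $\sum_{i : \max(B_i) < \min(B_{i+1})}(\alpha_1 + \cdots + \alpha_i - i)$ has all $\alpha_j = 1$, giving $\alpha_1 + \cdots + \alpha_i - i = 0$, so indeed $\maj(\sigma) = \maj(w)$).

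Next I would set up the combinatorial recursion on the left-hand side by classifying $\sigma \in \OP_{n,k}$ according to the position of the letter $n$. Removing $n$ from $\sigma$ yields an ordered set partition of $[n-1]$ with either $k$ or $k-1$ blocks: it has $k-1$ blocks precisely when $n$ formed its own singleton block, and $k$ blocks otherwise. I would then need to understand exactly how $\maj(\sigma)$ changes under this deletion. When $n$ is removed from a non-singleton block in position $j$ (counting from the left, among the $k$ blocks), $n$ is necessarily the maximum of its block, so it does not contribute to $\maj(w)$ as a descent-creating inversion in the obvious way, but it can affect the boundary-correction sum via the condition $\max(B_j) < \min(B_{j+1})$ and via $\max(B_{j-1}) < \min(B_j)$; careful bookkeeping should show the generating function over all placements of $n$ into a fixed shape-$\alpha$ partition of $[n-1]$ contributes a factor whose reversal matches $[k]_q$ up to the degree shift. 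When $n$ is its own block, it must be the last block (since $n$ is the largest letter and within-block order is increasing — no, $n$ can be any singleton block, not necessarily last), and the correction term and $\maj(w)$ shift in a predictable way.

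The main obstacle will be the degree bookkeeping forced by the $\rev_q$ operator: because reversal is not a ring homomorphism, the clean multiplicative recursion $\Stir_q(n,k) = \Stir_q(n-1,k-1) + [k]_q \Stir_q(n-1,k)$ becomes an inhomogeneous-looking recursion for $f(n,k)$ with $q$-power prefactors, and I must match those prefactors exactly against the shift in $\maj$ under deletion of $n$. This requires pinning down the maximum value of $\maj$ on $\OP_{n,k}$ (equivalently the top degree of $[k]!_q \Stir_q(n,k)$) and the maximum on $\OP_{n-1,k}$ and $\OP_{n-1,k-1}$, then checking the arithmetic identity relating these three. A secondary subtlety is that the definition of $\maj(\sigma)$ here is the reversal of the more standard one in \cite{HRS}, so I should double-check orientation conventions so that a single global $\rev_q$ (and not, say, a $q \mapsto q^{-1}$ times a power of $q$ applied blockwise) suffices; the remark immediately preceding the proposition asserting that our $\maj$-distribution is the reversal of theirs is exactly the hinge, so in fact the cleanest route may be to cite the known formula $\sum_{\sigma} q^{\maj_{\text{HRS}}(\sigma)} = [k]!_q \cdot \Stir_q(n,k)$ from \cite{HRS} (or \cite{RW}) and then simply apply $\rev_q$, reducing the whole proof to verifying the reversal relationship between the two statistics — which itself is a finite, explicit degree computation comparing $\maj(\sigma)$ and $\maj_{\text{HRS}}(\sigma)$ and showing $\maj(\sigma) + \maj_{\text{HRS}}(\sigma)$ equals the common top degree for every $\sigma \in \OP_{n,k}$.
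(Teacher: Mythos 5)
Your ``cleanest route'' is essentially the right idea and is what the paper does, but the last step as you state it is false, and the fix is a small but essential bijection that your proposal omits. You propose to verify that $\maj(\sigma) + \maj'(\sigma)$ (with $\maj'$ the Remmel--Wilson statistic) equals the common top degree $M$ for \emph{every} $\sigma \in \OP_{n,k}$, which would give the reversal pointwise via the identity map. This is not true. Take $n = 3$, $k = 2$, where $M = 2$. For $\sigma = (1 \mid 23)$ one computes $\maj(\sigma) = 0$ and $\maj'(\sigma) = 1$, so the sum is $1$; for $\sigma = (3 \mid 12)$ one computes $\maj(\sigma) = 1$ and $\maj'(\sigma) = 2$, so the sum is $3$. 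So the pointwise complementary identity you want simply does not hold.

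The correct statement, and the one the paper actually proves, involves the shape-preserving involution $w_0 \colon \OP_\alpha \to \OP_\alpha$ that complements each letter $j \mapsto n + 1 - j$ within its block. One shows $\maj'(\sigma) + \maj(w_0(\sigma)) = M$ for all $\sigma$, which gives
\begin{equation*}
\sum_{\sigma \in \OP_{n,k}} q^{\maj(\sigma)} = \sum_{\sigma} q^{\maj(w_0(\sigma))} = \sum_{\sigma} q^{M - \maj'(\sigma)} = \rev_q\!\left( \sum_{\sigma} q^{\maj'(\sigma)} \right) = \rev_q\bigl( [k]!_q \cdot \Stir_q(n,k) \bigr),
\end{equation*}
using the Remmel--Wilson identity for the last equality. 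In the example above, $w_0(1 \mid 23) = (3 \mid 12)$ and $w_0(3 \mid 12) = (1 \mid 23)$, and the sums $\maj'(\sigma) + \maj(w_0(\sigma))$ are $1 + 1 = 2$ and $2 + 0 = 2$, matching $M$. So your proposal's strategy of citing Remmel--Wilson and then applying $\rev_q$ is exactly the paper's strategy, but it only closes once you introduce the involution $w_0$; a purely pointwise comparison of $\maj$ and $\maj'$ will not finish the argument. Your alternative ``recursive by deleting $n$'' route is genuinely different from the paper's, but as written it stops at ``careful bookkeeping should show\ldots,'' which is precisely the part that is delicate here because $\rev_q$ destroys the clean multiplicative form of the Stirling recursion, so I cannot credit it as a completed argument.
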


\begin{proof}
To see why this equation holds, consider the statistic $\maj'$ on an ordered set partition 
$\sigma = (B_1 \mid \cdots \mid B_k) = (w, \alpha) \in \OP_{n,k}$ defined by
\begin{equation}
\maj'(\sigma) = \maj'(w,\alpha) := \sum_{i = 1}^k (i-1)(\alpha_i - 1) + \sum_{i \, : \, \min(B_i) > \max(B_{i+1})} i.
\end{equation}
This is precisely the version of major index on ordered set partitions studied
by Remmel and Wilson \cite{RW}.  They proved \cite[Eqn. 15, Prop. 5.1.1]{RW}
that
\begin{equation}
\label{remmel-wilson}
\sum_{\sigma \in \OP_{n,k}} q^{\maj'(\sigma)} = [k]!_q \cdot \Stir_q(n,k).
\end{equation}

On the other hand, for any $\sigma = (w,\alpha) = (B_1 \mid \cdots \mid B_k) \in \OP_{n,k}$ we have
\begin{equation}
\maj(w) = \sum_{i \, : \, \max(B_i) > \min(B_{i+1})} (\alpha_1 + \cdots + \alpha_i).
\end{equation}
This implies
\begin{align}
\maj(\sigma) &= \maj(w) + \sum_{i \, : \, \max(B_i) < \min(B_{i+1})} (\alpha_1 + \cdots + \alpha_i - i) \\
&= \sum_{i = 1}^{k-1} [(k-i) \cdot \alpha_i]   - \sum_{i \, : \, \max(B_i) < \min(B_{i+1})} i.
\end{align}

The longest element $w_0 =  n \dots 21$ (in one-line notation) of $\symm_n$ gives an involution on $\OP_\alpha$ by
\begin{equation*}
\sigma = (B_1 \mid \cdots \mid B_k) \mapsto w_0(\sigma) = (w_0(B_1) \mid \cdots \mid w_0(B_k)).
\end{equation*}

If $\alpha \models n$ and $\ell(\alpha) = k$, then for $\sigma = (B_1 \mid \cdots \mid B_k) \in \OP_{\alpha}$
and any index $1 \leq i \leq k-1$ 
we have $\max(B_i) < \min(B_{i+1})$ if and only if $\min(w_0(B_i)) > \max(w_0(B_{i+1}))$.
Therefore,
\begin{align}
\maj'(\sigma) + \maj(w_0(\sigma)) &= (k-1)(\alpha_k - 1) + \sum_{i = 1}^{k-1} [(i-1)(\alpha_i - 1) + (k-i) \cdot \alpha_i] \\
&= (k-1)(\alpha_k - 1) + \sum_{i = 1}^{k-1} [-\alpha_i - i + 1 + k \alpha_i] \\
&= (k-1)(n - k) - {k \choose 2}.
\end{align}
On the other hand, it is easy to see that 
\begin{equation}
\max\{ \maj(\sigma) \,:\, \sigma \in \OP_{n,k} \} = (k-1)(n - k) - {k \choose 2} =
\max\{ \maj'(\sigma) \,:\, \sigma \in \OP_{n,k} \}.
\end{equation}
Applying Equation~\ref{remmel-wilson} gives
\begin{equation}
\sum_{\sigma \in \OP_{n,k}} q^{\maj(\sigma)} = \rev_q \left[ \sum_{\sigma \in \OP_{n,k}} q^{\maj'(\sigma)} \right]
= \rev_q( [k]!_q \cdot \Stir_q(n,k) ).
\end{equation}
\end{proof}

Now we define an action of the $0$-Hecke algebra $H_n(0)$ on $\FF[\OP_{n,k}]$.
We will find it convenient to introduce an alternative generating set of $H_n(0)$.
For $1 \leq i \leq n-1$, define the element
$\pib_i \in H_n(0)$ by $\pib_i := \pi_i - 1$. 
We will often use the relation $\pib_i\pi_i = \pi_i\pib_i=0$.
It can be shown that the set 
$\{ \pib_1, \pib_2, \dots, \pib_{n-1} \}$ is a generating set of $H_n(0)$ subject
only to the relations
\begin{equation}
\begin{cases}
\pib_i^2 = - \pib_i & 1 \leq i \leq n-1 \\
\pib_i \pib_j = \pib_j \pib_i & |i - j| > 1 \\
\pib_i \pib_{i+1} \pib_i = \pib_{i+1} \pib_i \pib_{i+1} & 1 \leq i \leq n-2.
\end{cases}
\end{equation}
In terms of the $\pib_i$, the 0-Hecke action on $\FF[\OP_{n,k}]$ is given by 
\begin{equation}
\pib_i.\sigma = 
\begin{cases}
-\sigma & \text{if $i+1$ appears in a block to the left of $i$ in $\sigma$,} \\
0 & \text{if $i+1$ appears in the same block as $i$ is $\sigma$, and} \\
s_i(\sigma) & \text{if $i+1$ appears in a block to the right of $i$ in $\sigma$.}
\end{cases}
\end{equation}
This defines an $H_n(0)$-action which preserves $\FF[\OP_\alpha]$ for each composition $\alpha$ of $n$; 
see the proof of Proposition~\ref{osp-substructure}.

\subsection{Gr\"obner theory}
We review material related to Gr\"obner bases of ideals $I \subseteq \FF[\xx_n]$ and standard monomial
bases of the corresponding quotients $\FF[\xx_n]/I$.  For a more leisurely introduction to this material,
see \cite{CLO}.

A total order $\leq$ on the monomials in $\FF[\xx_n]$ is called a {\em term order} if 
\begin{itemize}
\item $1 \leq m$ for all monomials $m \in \FF[\xx_n]$, and
\item  if $m, m', m'' \in \FF[\xx_n]$ are monomials, then $m \leq m'$ implies $m \cdot m'' \leq m' \cdot m''$.
\end{itemize}
In this paper, we will consider the lexicographic term order {\bf with respect to the variable ordering
$x_n > \cdots > x_2 > x_1$}.  That is, we have
\begin{equation*}
x_1^{a_1} \cdots x_n^{a_n} < x_1^{b_1}  \cdots x_n^{b_n}
\end{equation*}
if and only if there exists an integer $1 \leq  j \leq n$ such that $a_{j+1} = b_{j+1}, \dots, a_n = b_n$, and
$a_j < b_j$.
Following the notation of $\textsc{sage}$,
we call this term order {\tt neglex}.

Let $\leq$ be any term order on monomials in $\FF[\xx_n]$.  If $f \in \FF[\xx_n]$ is a nonzero polynomial,
let $\initial_<(f)$ be the leading (i.e., largest) term of $f$ with respect to $<$.  If $I \subseteq \FF[\xx_n]$ is an ideal, the 
associated {\em initial ideal} is the monomial ideal
\begin{equation}
\initial_<(I) := \langle \initial_<(f) \,:\, f \in I - \{0\} \rangle.
\end{equation}
The set of monomials
\begin{equation}
\{ \text{monomials $m \in \FF[\xx_n]$} \,:\, m \notin \initial_<(I) \}
\end{equation}
descends to a $\FF$-basis for the quotient  ${\FF[\xx_n]} / {I}$; this basis is called the 
{\em standard monomial basis} (with respect to the term order $\leq$) \cite[Prop. 1, p. 230]{CLO}.

Let $I \subseteq \FF[\xx_n]$ be any ideal and let $\leq$ be a term order. 
A finite subset $G = \{g_1, \dots, g_r\} \subseteq I$ 
of nonzero polynomials in $I$ is called a {\em Gr\"obner basis} of $I$ if
\begin{equation}
\initial_<(I) = \langle \initial_<(g_1), \dots , \initial_<(g_r) \rangle.
\end{equation}
If $G$ is a Gr\"obner basis for $I$, then we have $I = \langle G \rangle$ \cite[Cor. 6, p. 77]{CLO}.

Let $G$ be a Gr\"obner basis for $I$ with respect to the
term order $\leq$.  The basis $G$ is called {\em minimal} if
\begin{itemize}
\item for any $g \in G$, the leading coefficient of $g$ with respect to $\leq$ is $1$, and
\item for any $g \neq g'$ in $G$, the leading monomial of $g$ does not divide the leading monomial of $g'$.
\end{itemize}
A minimal Gr\"obner basis $G$ is called {\em reduced} if in addition 
\begin{itemize}
\item  for any $g \neq g'$ in $G$, the leading monomial of $g$ does not divide any term in the polynomial $g'$.
\end{itemize}
Up to a choice of term order,
every ideal $I$ has a unique reduced Gr\"obner basis \cite[Prop. 6, p. 92]{CLO}.

\subsection{$\Sym$, $\QSym$, and $\NSym$}
Let $\xx = (x_1, x_2, \dots )$ be a totally ordered infinite set of variables 
and let $\Sym$ be the ($\ZZ$-)algebra of symmetric functions in $\xx$ with coefficients in $\ZZ$.
The algebra $\Sym$ is graded; its degree $n$ component has basis given by the collection
$\{s_{\lambda} \,:\, \lambda \vdash n\}$ of {\em Schur functions}.  The Schur function $s_{\lambda}$
may be defined as 
\begin{equation}
\label{schur-function-definition}
s_{\lambda} = \sum_T \xx^T,
\end{equation}
where the sum is over all semistandard tableaux $T$ of shape $\lambda$ and $\xx^T$ is the monomial
\begin{equation}
\xx^T := x_1^{\# \text{of 1s in $T$}} x_2^{\# \text{of 2s in $T$}} \cdots.
\end{equation}

Given partitions $\mu \subseteq \lambda$, we also let $s_{\lambda/\mu} \in \Sym$ denote the associated
{\em skew Schur function}.  The expansion of $s_{\lambda/\mu}$ in the $\xx$ variables is also given by 
Equation~\ref{schur-function-definition}.
In particular, if $\alpha$ is a composition (thought of as a ribbon), we have the
{\em ribbon Schur function} $s_{\alpha} \in \Sym$.

There is a coproduct of $\Sym$ given by replacing the variables $x_1,x_2,\ldots$ with $x_1,x_2,\ldots, y_1, y_2,\ldots$ such that $\Sym$ becomes a graded Hopf algebra which is self-dual under the basis $\{s_\lambda\}$~\cite[\S2]{GrinbergReiner}.

Let $\alpha = (\alpha_1, \dots, \alpha_k) \models n$ be a composition.  The {\em monomial quasisymmetric
function} is the formal power series
\begin{equation}
M_{\alpha} := \sum_{i_1 < \cdots < i_k} x_{i_1}^{\alpha_1} \cdots x_{i_k}^{\alpha_k}.
\end{equation}
The graded algebra of {\em quasisymmetric functions} $\QSym$ is the $\ZZ$-linear span of the $M_{\alpha}$,
where $\alpha$ ranges over all compositions.

We will focus on a basis for $\QSym$ other than the monomial quasisymmetric functions $M_{\alpha}$.
If $n$ is a positive integer and if $S \subseteq [n-1]$, the {\em Gessel fundamental quasisymmetric function}
$F_S$ attached to $S$ is
\begin{equation}
F_S := \sum_{\substack{i_1 \leq \cdots \leq i_n \\ j \in S \, \Rightarrow \, i_j < i_{j+1}}} x_{i_1} \cdots x_{i_n}.
\end{equation}
In particular, if $w \in \symm_n$ is a permutation with inverse descent set $\iDes(w) \subseteq [n-1]$,
we have the quasisymmetric function $F_{\iDes(w)}$.
If $\alpha \models n$ is a composition, we extend this notation by setting $F_{\alpha} := F_{\Des(\alpha)}$.

Next, let $\NSym$ be the graded algebra of {\em noncommutative symmetric functions}.
This is the free unital associative (noncommutative) algebra
\begin{equation}
\NSym := \ZZ \langle \mathbf{h}_1, \mathbf{h}_2, \dots \rangle
\end{equation}
generated over $\ZZ$  by the symbols
$\mathbf{h}_1, \mathbf{h}_2, \dots$, where $\mathbf{h}_d$ has degree $d$.
The degree $n$
component of  $\NSym$ has $\ZZ$-basis given by $\{ \mathbf{h}_{\alpha} \,:\, \alpha \models n\}$, where
for $\alpha = (\alpha_1, \dots, \alpha_{\ell}) \models n$ we set
\begin{equation}
\mathbf{h}_{\alpha} := \mathbf{h}_{\alpha_1}  \cdots \mathbf{h}_{\alpha_{\ell}}.
\end{equation}
Another basis of the degree $n$ piece of $\NSym$ 
 consists of the {\em ribbon Schur functions} $\{\sss_{\alpha} \,:\, \alpha \models n\}$.
 The ribbon Schur function $\sss_{\alpha}$ is defined by
 \begin{equation}
 \sss_{\alpha} := \sum_{\beta \preceq \alpha} (-1)^{\ell(\alpha) - \ell(\beta)} \mathbf{h}_{\beta}.
 \end{equation}

Finally, there are coproducts for $\QSym$ and $\NSym$ such that they become dual graded Hopf algebras~\cite[\S5]{GrinbergReiner}.

\subsection{Characteristic maps}
\label{char}
Let $A$ be a finite-dimensional algebra over a field $\FF$.
The {\em Grothendieck group $G_0(A)$ of the category of finitely-generated $A$-modules} is the quotient of the free abelian group generated by isomorphism classes $[M]$ of finitely-generated $A$-modules $M$ by the subgroup generated by  elements $[M] - [L] - [N]$ corresponding to short exact sequences $0 \rightarrow L \rightarrow M \rightarrow N \rightarrow 0$ of finitely-generated $A$-modules.
The abelian group $G_0(H_n(0))$ has free basis given by the collection of (isomorphism classes of) irreducible $A$-modules.
The {\em Grothendieck group $K_0(A)$ of the category of finitely-generated projective $A$-modules} is defined similarly, and has free basis given by the set of (isomorphism classes of) projective indecomposable $A$-modules.
If $A$ is semisimple then $G_0(A)=K_0(A)$.
See~\cite{ASS} for more details on representation theory of finite dimensional algebras.

The symmetric group algebra $\QQ[\symm_n]$ is semisimple and has irreducible representations $S^\lambda$ indexed by partitions $\lambda\vdash n$.  
The \emph{Grothendieck group} $G_0(\QQ[\symm_\bullet])$ of the tower $\QQ[\symm_\bullet]: \QQ[\symm_0]\hookrightarrow \QQ[\symm_1]\hookrightarrow \QQ[\symm_2] \hookrightarrow \cdots$ of symmetric group algebras is the direct sum of $G_0(\QQ[\symm_n])$ for all $n\ge0$.
It is a graded Hopf algebra with product and coproduct given by induction and restriction along the embeddings $\symm_m\otimes\symm_n\hookrightarrow\symm_{m+n}$.
The structual constants of $G_0(\QQ[\symm_\bullet])$ under the self-dual basis $\{S_\lambda\}$, where $\lambda$ runs through all partitions, are the well-known \emph{Littlewood-Richardson coefficients}. 


The {\em Frobenius character}
\footnote{The Frobenius character $\Frob(V)$ is indeed a ``character'' since the schur functions are characters of irreducible polynomial representations of the general linear groups.}
$\Frob(V)$ of a finite-dimensional $\QQ[\symm_n]$-module $V$ is
\begin{equation}
\Frob(V) := \sum_{\lambda \vdash n} \, [\, V : S^\lambda\, ] \cdot s_{\lambda} \in \Sym
\end{equation}
where $[\, V: S^\lambda \,]$ is the multiplicity of the simple module $S^\lambda$ among the composition factors of $V$.
The correspondence $\Frob$ gives a graded Hopf algebra isomorphism $G_0(\QQ[\symm_\bullet]) \cong \Sym$~\cite[\S4.4]{GrinbergReiner}.

One can refine $\Frob$ for graded representations of $\QQ[\symm_n]$.
Recall that the {\em Hilbert series} of a graded vector space $V = \bigoplus_{d \geq 0} V_d$ with each component $V_d$ finite-dimensional is
\begin{equation}
\Hilb(V;q) := \sum_{d \geq 0} \dim(V_d) \cdot q^d.
\end{equation}
If $V$ carries a graded action of $\QQ[\symm_n]$, we also define the {\em graded Frobenius series} by
\begin{equation}
\grFrob(V;q) := \sum_{d \geq 0} \Frob(V_d) \cdot q^d.
\end{equation}

Now let us recall the 0-Hecke analog of the above story.
Consider an arbitrary ground field $\FF$.
The representation theory of the $\FF$-algebra $H_n(0)$ was studied by Norton \cite{Norton}, 
which has a different flavor from that of $\QQ[\symm_n]$ since $H_n(0)$ is not semisimple.
Norton \cite{Norton} proved that 
the $H_n(0)$-modules 
\begin{equation}
P_{\alpha} := H_n(0) \pib_{w_0(\alpha)} \pi_{w_0(\alpha^c)},
\end{equation}
where $\alpha$ ranges over all compositions of $n$, form a complete list of nonisomorphic
 indecomposable projective 
$H_n(0)$-modules.  
For each $\alpha\models n$, the $H_n(0)$-module $P_\alpha$ has a basis 
\[ \{ \pib_w \pi_{w_0(\alpha^c)} : w\in\symm_n,\ \Des(w) = \Des(\alpha) \}. \]
Moreover, $P_\alpha$ has a unique maximal submodule spanned by all elements in the above basis except its cyclic generator $\pib_{w_0(\alpha)} \pi_{w_0(\alpha^c)}$, and the quotient of $P_\alpha$ by this maximal submodule, denoted by $C_\alpha$, is one-dimensional and admits an $H_n(0)$-action by $\pib_i = -1$ for all $i\in \Des(\alpha)$ and $\pib_i = 0$ for all $i\in \Des(\alpha^c)$. 
The collections $\{ P_\alpha:\alpha\models n\}$ and $\{ C_\alpha: \alpha\models n\}$ are complete lists of nonisomorphic projective indecomposable and irreducible $H_n(0)$-modules, respectively.

Just as the Frobenius character map gives a deep connection between the representation
theory of symmetric groups and the ring $\Sym$ of symmetric functions, there are two characteristic maps 
$\Ch$ and $\ch$, defined by Krob and Thibon \cite{KT}, which facilitate the study of representations of $H_n(0)$ through the rings 
$\QSym$ and $\NSym$.  Let us recall their construction.

The two Grothendieck groups $G_0(H_n(0))$ and $K_0(H_n(0))$ has free $\ZZ$-bases $\{P_\alpha:\alpha\models n\}$ and $\{C_\alpha:\alpha\models n\}$, respectively. 
Associated to the tower of algebras 
$H_{\bullet}(0) : H_0(0) \hookrightarrow H_1(0) \hookrightarrow H_2(0) \hookrightarrow \cdots$
are the two Grothendieck groups
\begin{center}
$G_0(H_{\bullet}(0)) := \bigoplus_{n \geq 0} G_0(H_n(0))$ and 
$K_0(H_{\bullet}(0)) := \bigoplus_{n \geq 0} K_0(H_n(0))$.
\end{center}
These groups are graded Hopf algebras with product and coproduct given by induction and restriction along the embeddings $H_n(0) \otimes H_m(0) \hookrightarrow H_{n+m}(0)$, and they are dual to each other via the pairing $\langle P_\alpha, C_\beta \rangle = \delta_{\alpha,\beta}$.

Analogously to the Frobenius correspondence, Krob and Thibon \cite{KT} defined two linear maps
\begin{center}
$\Ch: G_0(H_{\bullet}(0)) \rightarrow \QSym$ and $\ch: K_0(H_{\bullet}(0)) \rightarrow \NSym$
\end{center}
by $\Ch(C_{\alpha}) := F_{\alpha}$ and $\ch(P_{\alpha}) := \sss_{\alpha}$ for all compositions $\alpha$.  These maps
are isomorphisms of graded Hopf algebras.  
Krob and Thibon also showed \cite{KT} that for any composition $\alpha$, we have $\Ch(P_\alpha)$ equals the corresponding ribbon Schur function $s_{\alpha} \in \Sym$:
\begin{equation}
\label{ChP}
\Ch(P_{\alpha}) =  \sum_{w \in \symm_n \,:\, \Des(w) = \Des(\alpha)} F_{\iDes(w)} = s_{\alpha}.
\end{equation}

We give graded extensions of the maps $\Ch$ and $\ch$ as follows.  
Suppose that $V = \bigoplus_{d \geq 0} V_d$ is a graded $H_n(0)$-module with finite-dimensional homogeneous components $V_d$.
The {\em degree-graded noncommutative characteristic} and {\em degree-graded quasisymmetric characteristic} of $V$ are defined by
\begin{equation}
\ch_t(V) := \sum_{d \geq 0} \ch(V_d) \cdot t^d
\quad\text{and}\quad
\Ch_t(V) := \sum_{d \geq 0} \Ch(V_d) \cdot t^d.
\end{equation}

On the other hand, the 0-Hecke algebra $H_n(0)$ has a {\em length filtration}
$H_n(0)^{(0)} \subseteq H_n(0)^{(1)} \subseteq H_n(0)^{(2)} \subseteq \cdots$ 
where $H_n(0)^{(\ell)}$ is the span of $\{ \pi_w: w \in \symm_n,\ \ell(w) \ge \ell \}$.
If $V = H_n(0)v$ is a cyclic $H_n(0)$-module with a distinguished
generator $v$, we get an induced length filtration of $V$ given by
\begin{equation}
V^{(\ell)} := H_n(0)^{(\ell)} v.
\end{equation}
Following Krob and Thibon~\cite{KT}, we define the \emph{length-graded quasymmetric characteristic} of $V$ as
\begin{equation}
\Ch_q(V) := \sum_{\ell \geq 0}  \Ch \left( {V^{(\ell)}} \big/ {V^{(\ell+1)}} \right) \cdot q^{\ell}.
\end{equation}


Now suppose $V = \bigoplus_{d \geq 0} V_d$ is a graded $H_n(0)$-module which is also cyclic. 
We get a bifiltration of $V$ consisting of the $H_n(0)$-modules 
$V^{(\ell,d)} := V^{(\ell)} \cap V_d$ for $\ell, d \geq 0$.  
The {\em length-degree-bigraded quasisymmetric characteristic} of $V$ is then
\begin{equation}
\Ch_{q,t}(V) := \sum_{\ell, d \geq 0} \Ch \left( {V^{(\ell,d)}} \Big/ \Big( {V^{(\ell+1,d)} + V^{(\ell,d+1)}} \Big) \right) \cdot q^{\ell} t^d.
\end{equation}
More generally, if a graded $H_n(0)$-module $V$ is isomorphic to a direct sum of cyclic $H_n(0)$-modules, then define $\Ch_{q,t}(V)$ by applying $\Ch_{q,t}$ to its direct summands.
Note that this may depend on the choice of the direct sum decomposition of $V$ into cyclic modules.
Specializing $q=1$ and $t=1$ gives $\Ch_{1,t}(V)=\Ch_t(V)$ and $\Ch_{q,1}(V)=\Ch_q(V)$, respectively.

\section{Hilbert Series and Artin basis}
\label{Hilbert}

\subsection{The point sets $Z_{n,k}$}
In this section we will prove that $\dim(S_{n,k}) = |\OP_{n,k}|$.
To do this, we will use tools from elementary algebraic geometry.
This basic method dates back to the work of Garsia and Procesi on Springer
fibers and Tanisaki quotients \cite{GP}.

Given a finite point set $Z \subseteq \FF^n$, let $\II(Z) \subseteq \FF[\xx_n]$ be the ideal of 
polynomials which vanish on $Z$:
\begin{equation}
\II(Z) := \{ f \in \FF[\xx_n] \,:\, f(\zz) = 0 \text{ for all $\zz \in Z$} \}.
\end{equation} 
There is a natural identification of the quotient ${\FF[\xx_n]} / {\II(Z)}$ with the collection of
polynomial functions $Z \rightarrow \FF$.

We claim that any function $Z \rightarrow \FF$ may be realized as the restriction of a polynomial
function.  This essentially follows from Lagrange Interpolation.  Indeed, since $Z \subseteq \FF^n$ is finite, there exist
field elements $\alpha_1, \dots, \alpha_m \in \FF$ such that $Z \subseteq \{\alpha_1, \dots, \alpha_m\}^n$. 
For any $n$-tuple of integers $(i_1, \dots, i_n)$ between $1$ and $m$, the polynomial
\begin{equation*}
\prod_{j_1 \neq i_1} (x_1 - \alpha_{j_1}) \cdots \prod_{j_n \neq i_n} (x_n - \alpha_{j_n}) \in \FF[\xx_n]
\end{equation*} 
vanishes on every point of $\{\alpha_1, \dots, \alpha_m\}^n$ except for 
$(\alpha_{i_1}, \dots, \alpha_{i_n})$.  Hence, an arbitrary $\FF$-valued function on
$\{\alpha_1, \dots, \alpha_m\}^n$ may be realized using a linear combination of polynomials of the above form.
Since $Z \subseteq \{\alpha_1, \dots, \alpha_m\}^n$, the same is true for an arbitrary $\FF$-valued function on $Z$.

By the last paragraph, we may identify the quotient ${\FF[\xx_n]} / {\II(Z)}$ with the collection of all 
functions $\ZZ \rightarrow \FF$.
In particular, the dimension of this quotient as an $\FF$-vector space is
\begin{equation}
\dim \left( {\FF[\xx_n]} / {\II(Z)} \right) = |Z|.
\end{equation}

The ideal $\II(Z)$ is almost never homogeneous.  To get a homogeneous ideal, we do the following.
For any nonzero polynomial $f \in \FF[\xx_n]$, let $\tau(f)$ be the top degree component of $f$.  That 
is, if $f = f_d + f_{d-1} + \cdots + f_0$ where $f_i$ has homogeneous degree $i$ for all $i$ and $f_d \neq 0$, then
$\tau(f) = f_d$.  The ideal $\TT(Z) \subseteq \FF[\xx_n]$ is generated by the top degree components of all
nonzero polynomials in $\II(Z)$.  In symbols:
\begin{equation}
\TT(Z) := \langle \tau(f) \,:\, f \in \II(Z) - \{0\} \rangle.
\end{equation}

The ideal $\TT(Z)$ is homogeneous by definition, so that ${\FF[\xx_n]} / {\TT(Z)}$ is a graded $\FF$-vector space.
Moreover, it is well known that
\begin{equation}
\dim \left( {\FF[\xx_n]} / {\TT(Z)} \right) = \dim \left( {\FF[\xx_n]} / {\II(Z)} \right) = |Z|.
\end{equation}

Our three-step strategy for proving $\dim(S_{n,k}) = |\OP_{n,k}|$ is as follows.

\begin{enumerate}
\item  Find a  point set $Z_{n,k} \subseteq \FF^n$ which is in bijective correspondence with $\OP_{n,k}$.
\item  Prove that the generators of $J_{n,k}$ arise as top degree components of polynomials in $\II(Z_{n,k})$,
so that $J_{n,k} \subseteq \TT(Z_{n,k})$.
\item Use Gr\"obner theory to prove $\dim(S_{n,k}) \leq |\OP_{n,k}|$, forcing $\dim(S_{n,k}) = |\OP_{n,k}|$
by Steps 1 and 2.
\end{enumerate}

A similar three-step strategy was used by Haglund, Rhoades, and Shimozono \cite{HRS} 
in their analysis of the ungraded $\symm_n$-module structure of $R_{n,k}$.
In our setting, since we do not have a group action, we can only use this strategy to deduce the graded vector
space structure of $S_{n,k}$, rather than the $H_n(0)$-module structure of $S_{n,k}$.

To achieve Step 1 of our strategy, we need to find a candidate set $Z_{n,k} \subseteq \FF^n$ which is in bijective
correspondence with $\OP_{n,k}$.  Here we run into a problem:  to define our candidate point sets, we need the 
field $\FF$ to contain at least $n+k-1$ elements.  This problem did not arise in the work of Haglund et. al. \cite{HRS};
they worked exclusively over the field $\QQ$.  To get around this problem, we use the following  trick.

\begin{lemma}
\label{algebraic-swindle}
Let $\FF \subseteq \KK$ be a field extension and $J = \langle f_1, \dots, f_r \rangle \subseteq \FF[\xx_n]$ an ideal of $\FF[\xx_n]$ generated by $f_1, \dots, f_r \in \FF[\xx_n]$.  
Then $\dim_\FF ( {\FF[\xx_n]} / {J} )  = \dim_\KK ( {\KK[\xx_n]} / J' )$ where $J' := {\KK\otimes_\FF J}$.
\end{lemma}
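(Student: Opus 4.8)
The plan is to show that the quotient dimension is computed by a Gröbner basis calculation that is insensitive to the ground field, since the generators $f_1, \dots, f_r$ all lie in the smaller ring $\FF[\xx_n]$. Fix any term order $\leq$ on monomials in the $x_i$ (for instance $\neglex$). The key observation is that Buchberger's algorithm, applied to the input $\{f_1, \dots, f_r\} \subseteq \FF[\xx_n]$, only ever performs field operations on the coefficients of the $f_i$ and of intermediate $S$-polynomials, and these coefficients stay in $\FF$ throughout; the division (reduction) steps likewise involve only arithmetic in $\FF$. Hence the reduced Gröbner basis $G$ of $J$ computed over $\FF$ has all coefficients in $\FF$, and the very same set $G$ is the reduced Gröbner basis of $J' = \KK \otimes_\FF J$ computed over $\KK$ with respect to the same term order. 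In particular $\initial_<(J)$ and $\initial_<(J')$ are the same monomial ideal.

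First I would make the last point precise without invoking Buchberger's algorithm as a black box, which is cleaner: let $G = \{g_1, \dots, g_s\} \subseteq \FF[\xx_n]$ be the reduced Gröbner basis of $J$ over $\FF$. Then $G \subseteq J \subseteq J'$, and I claim $G$ is a Gröbner basis of $J'$ over $\KK$. By Buchberger's criterion it suffices to check that every $S$-polynomial $S(g_i, g_j)$ reduces to $0$ modulo $G$; but this reduction is a sequence of polynomial identities with coefficients in $\FF$ that already holds over $\FF$ (since $G$ is a Gröbner basis of $J$ there), and an identity in $\FF[\xx_n]$ remains valid in $\KK[\xx_n]$. Therefore $\initial_<(J') \supseteq \langle \initial_<(g_1), \dots, \initial_<(g_s)\rangle = \initial_<(J)$, and the reverse inclusion is automatic because any $f \in J$, viewed in $J'$, contributes the same leading term. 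So $\initial_<(J') = \initial_<(J)$ as monomial ideals in $\KK[\xx_n]$ (equivalently, they have the same standard monomials).

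Finally, by the standard monomial basis theorem \cite[Prop. 1, p. 230]{CLO}, the set of monomials not lying in $\initial_<(J)$ descends to an $\FF$-basis of $\FF[\xx_n]/J$, and the set of monomials not lying in $\initial_<(J')$ descends to a $\KK$-basis of $\KK[\xx_n]/J'$. Since these two monomial sets coincide, the two quotients have the same dimension:
\begin{equation*}
\dim_\FF\left( \FF[\xx_n]/J \right) = \#\{ m \notin \initial_<(J) \} = \#\{ m \notin \initial_<(J') \} = \dim_\KK\left( \KK[\xx_n]/J' \right),
\end{equation*}
allowing for the common value to be $+\infty$ if the quotient is infinite-dimensional (in our application it will be finite). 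The main obstacle, such as it is, is the bookkeeping in the claim that the reduced Gröbner basis over $\FF$ remains a Gröbner basis over $\KK$ — one must be careful that "reduces to zero" is witnessed by an explicit expression $S(g_i,g_j) = \sum_k q_k g_k$ with $q_k \in \FF[\xx_n]$ and with leading-term bounds, and that this same expression certifies reduction over $\KK$. Everything else is a direct application of the dictionary between initial ideals and standard monomial bases recalled in Section \ref{Background}.
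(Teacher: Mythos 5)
Your proposal is correct and follows essentially the same route as the paper: fix a term order, observe that the (reduced) Gr\"obner basis of $J$ computed from $\{f_1,\dots,f_r\}$ has coefficients in $\FF$ and is simultaneously a Gr\"obner basis of $J'$ over $\KK$, hence the two ideals have identical standard monomial bases and the quotients have the same dimension. The paper's version of the argument just invokes Buchberger's algorithm directly (``all coefficients involved in the polynomial long division lie in $\FF$'') without the extra step of re-verifying Gr\"obnerness over $\KK$ via Buchberger's criterion; your refinement is cleaner but not substantively different.

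One small wrinkle in your write-up: after establishing that $G$ is a Gr\"obner basis of $J'$, you already have $\initial_<(J') = \langle \initial_<(G) \rangle = \initial_<(J)$ in one stroke, so the sentence ``and the reverse inclusion is automatic \dots'' is superfluous and slightly misdirected (the automatic inclusion from $J \subseteq J'$ is $\initial_<(J) \subseteq \initial_<(J')$, which is the direction you do \emph{not} get for free without the Gr\"obner property). Also, to apply Buchberger's criterion to conclude $G$ is a Gr\"obner basis of $J'$ rather than merely of $\langle G \rangle_\KK$, you should note $\langle G \rangle_\KK = \KK \otimes_\FF \langle G \rangle_\FF = \KK \otimes_\FF J = J'$. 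It is worth mentioning that there is an even shorter alternative avoiding Gr\"obner bases entirely: flatness of $\KK$ over $\FF$ gives a natural isomorphism $\KK[\xx_n]/J' \cong \KK \otimes_\FF (\FF[\xx_n]/J)$ of $\KK$-vector spaces, and extension of scalars preserves dimension; but the paper's (and your) Gr\"obner approach has the advantage of also showing that the \emph{standard monomial bases} coincide, which is what the paper actually uses downstream in Theorem~\ref{standard-monomial-basis}.
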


Since $J_{n,k}$ is generated by polynomials with all coefficients equal to $1$, the generating set of $J_{n,k}$ 
satisfies the conditions of Lemma~\ref{algebraic-swindle}.

\begin{proof}
Let $\leq$ be any term order.
It suffices to show that the quotient rings ${\FF[\xx_n]} / {J}$ and ${\KK[\xx_n]} / {J'}$ have the same 
standard monomial bases with respect to $\leq$.  
To calculate the reduced Gr\"obner basis for the ideal $J$, we apply Buchberger's Algorithm~\cite[Ch. 2, \S7]{CLO} 
to the generating set $\{f_1, \dots, f_r\}$.   
To calculate the Gr\"obner basis for the ideal $J'$, we also apply Buchberger's Algorithm to the generating set
$\{f_1, \dots, f_r\}$.  In either case, all of the coefficients involved in the polynomial long division are contained in the
field $\FF$.  In particular, the reduced Gr\"obner bases of $J$ and $J'$ coincide.  Hence, the standard monomial
bases of ${\FF[\xx_n]} / {J}$ and ${\KK[\xx_n]} / {J'}$ also coincide.
\end{proof}

We are ready to define our point sets $Z_{n,k}$.  Thanks to Lemma~\ref{algebraic-swindle}, we may harmlessly
assume that 
the field $\FF$ has at least $n+k-1$ elements by replacing $\FF$ with an extension if necessary.
We will have to choose a somewhat non-obvious point set $Z_{n,k} \subseteq \FF^n$ in order to get the 
desired equality of ideals $\TT(Z_{n,k}) = J_{n,k}$.

\begin{defn}
Assume $\FF$ has at least $n+k-1$ elements and let $\alpha_1, \alpha_2, \dots, \alpha_{n+k-1} \in \FF$
be a list of $n+k-1$ distinct field elements.  Define $Z_{n,k} \subseteq \FF^n$ to be the collection of points
$(z_1, z_2, \dots, z_n)$ such that
\begin{itemize}
\item  for $1 \leq i \leq n$ we have $z_i \in \{\alpha_1, \alpha_2, \dots, \alpha_{k+i-1} \}$,
\item the coordinates $z_1, z_2, \dots, z_n$ are distinct, and
\item  we have $\{\alpha_1, \alpha_2, \dots, \alpha_k\} \subseteq \{z_1, z_2, \dots, z_n\}$.
\end{itemize}
\end{defn}

We claim that $Z_{n,k}$ is in bijective correspondence with $\OP_{n,k}$.  A bijection
$\varphi: \OP_{n,k} \rightarrow Z_{n,k}$ may be obtained as follows.
Let $\sigma = (B_1 \mid \cdots \mid B_k) \in \OP_{n,k}$ be an ordered set partition;
we define $\varphi(\sigma) = (z_1, \dots, z_n) \in Z_{n,k}$.
For $1 \leq i \leq k$, we first set $z_j = \alpha_i$, where $j = \min(B_i)$.  
Write the set of unassigned indices of
$(z_1, \dots, z_n)$ as  $S = [n] - \{\min(B_1), \dots, \min(B_k)\} = \{s_1 < \cdots < s_{n-k} \}$.
For $1 \leq j \leq n-k$ let $\ell_j$ be the number of blocks $B$ weakly to the left of
$s_j$ in $\sigma$ which satisfy $\min(B) < s_j$. 
Let $z_{s_1} = \alpha_{k + \ell_1}$.  Assuming $z_{s_1}, z_{s_2}, \dots, z_{s_{j-1}}$ have already 
been defined, let $z_{s_j}$ be the $\ell_j^{th}$ term in the sequence formed by deleting
$z_{s_1}, z_{s_2}, \dots, z_{s_{j-1}}$ from the sequence
$(\alpha_{k+1}, \alpha_{k+2}, \dots, \alpha_{k-n+1})$.

As an example of the map $\varphi$, let $\sigma = ( 7 8  \mid 2 3 6  \mid 1 4  \mid 5 9) \in \OP_{9,4}$.
The point $\varphi(\sigma) = (z_1, \dots, z_9)$ is obtained by first setting $z_7 = \alpha_1,
z_2 = \alpha_2, z_1 = \alpha_3,$ and $z_5 = \alpha_4$.  The set $S$ is
$S = \{s_1 < s_2 < s_3 < s_4 < s_5\} = \{3 < 4 < 6 < 8  < 9\}$.  We have $\ell_1 = 1, \ell_2 = 2, \ell_3 = 1, \ell_4 = 1,$ and
$\ell_5 = 4$, so that $z_{s_1} = z_3 = \alpha_5, z_{s_2} = z_4 = \alpha_7, z_{s_3} = z_6 = \alpha_6,
z_{s_4} = z_8 = \alpha_8,$ and $z_{s_5} = z_9 = \alpha_{12}$.  In summary, we have
\begin{equation*}
\varphi: ( 7 8  \mid 2 3 6  \mid 1 4  \mid 5 9) \mapsto 
(\alpha_3  ,  \alpha_2 ,  \alpha_5 , \alpha_7  ,  \alpha_4 , \alpha_6  ,  \alpha_1 , \alpha_8  , \alpha_{12}  ).
\end{equation*}

We leave it for the reader to check that $\varphi: \OP_{n,k} \rightarrow Z_{n,k}$ is 
well-defined and invertible.  The point set $Z_{n,k}$ therefore
achieves Step 1 of our strategy.

Achieving Step 2 of our strategy involves showing that the generators of $J_{n,k}$ arise as 
top degree components of strategically chosen polynomials vanishing on $Z_{n,k}$.

\begin{lemma}
\label{j-contained-in-t}
Assume $\FF$ has at least $n+k-1$ elements.  We have $J_{n,k} \subseteq \TT(Z_{n,k})$.
\end{lemma}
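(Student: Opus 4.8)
The goal is to show that each of the $n+k$ generators of $J_{n,k}$ — the complete homogeneous symmetric functions $h_k(x_1,\dots,x_i)$ for $1\le i\le n$, and the elementary symmetric functions $e_{n-k+1}(\xx_n),\dots,e_n(\xx_n)$ — is the top degree component $\tau(f)$ of some polynomial $f\in\II(Z_{n,k})$. Since $\TT(Z_{n,k})$ is generated by all such top degree components, producing these $n+k$ witnesses suffices. The key is to exploit the defining combinatorial constraints on a point $(z_1,\dots,z_n)\in Z_{n,k}$: the coordinate $z_i$ is drawn from the $(k+i-1)$-element set $\{\alpha_1,\dots,\alpha_{k+i-1}\}$, all coordinates are distinct, and the first $k$ values $\alpha_1,\dots,\alpha_k$ all appear among them.

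The plan is to treat the two families separately. For the $h_k$ generators, I would use the membership constraint $z_i\in\{\alpha_1,\dots,\alpha_{k+i-1}\}$: the product $\prod_{j=1}^{k+i-1}(x_i-\alpha_j)$ vanishes identically on $Z_{n,k}$, hence lies in $\II(Z_{n,k})$, but as a polynomial in $x_i$ alone of degree $k+i-1$ this has top degree component $x_i^{k+i-1}$ — not what we want. Instead I would build the witness recursively in $i$, mirroring the shift relation $\pi_i(h_k(x_1,\dots,x_i)) = h_k(x_1,\dots,x_{i+1})$ that already appears in~\eqref{shift}. Concretely, one shows by induction that there is $f_i\in\II(Z_{n,k})$ with $\tau(f_i)=h_k(x_1,\dots,x_i)$; the base case $i=1$ uses $f_1 = \prod_{j=1}^{k}(x_1-\alpha_j)$ whose top term is $x_1^k = h_k(x_1)$, and for the inductive step one combines $f_{i-1}$ with the vanishing polynomial $\prod_{j=1}^{k+i-1}(x_i-\alpha_j)$ — using the classical identity $h_k(x_1,\dots,x_i) = \sum_{d=0}^{k} x_i^{d}\, h_{k-d}(x_1,\dots,x_{i-1})$ to see that an appropriate $\FF$-linear combination of $x_i^{a}\cdot f_{i-1}$-type terms and the one-variable vanishing polynomial has the right leading form while still lying in the ideal. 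This is essentially the Garsia–Procesi style argument that reduces a symmetric-function identity to a vanishing-locus computation.

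For the high-degree elementary symmetric generators $e_{n-k+1},\dots,e_n$, I would argue as in~\cite{HRS}: on any point $(z_1,\dots,z_n)\in Z_{n,k}$ the coordinates are $n$ distinct elements of $\{\alpha_1,\dots,\alpha_{n+k-1}\}$ containing $\{\alpha_1,\dots,\alpha_k\}$, so the polynomial $\prod_{i=1}^{n}(t - z_i)$ equals $\prod_{\alpha\in A}(t-\alpha)$ for the corresponding $n$-element subset $A$. Expanding $\prod_{i=1}^n(t-x_i) = \sum_{d=0}^n (-1)^d e_d(\xx_n) t^{n-d}$ and evaluating at $t=\alpha_j$ for suitable $j$ gives polynomial relations on $Z_{n,k}$ whose top degree components, after the standard triangular manipulation, are exactly $e_{n}(\xx_n), e_{n-1}(\xx_n), \dots, e_{n-k+1}(\xx_n)$; I would adapt \cite[proof of Lemma/Thm in Section 4]{HRS} essentially verbatim, noting that the argument there is field-independent and uses only the distinctness and containment conditions, both of which hold here.

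The main obstacle I expect is the $h_k$ family: getting the top degree component to be exactly $h_k(x_1,\dots,x_i)$ rather than merely a polynomial of the same degree requires care in choosing the $\FF$-linear combination at each inductive step, and one must verify that the lower-order terms introduced by the $\alpha_j$'s genuinely do not interfere with the leading term under the \texttt{neglex} (or any) term order. The recursion bookkeeping — tracking which homogeneous pieces $h_{k-d}(x_1,\dots,x_{i-1})$ show up and with what coefficients — is where the real work lies; the $e_d$ half is routine given \cite{HRS}.
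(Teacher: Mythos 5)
Your second half (for the $e_r$ generators) is sound: the relations obtained by evaluating $\prod_i(t-x_i)$ at $t=\alpha_1,\dots,\alpha_k$ all lie in $\II(Z_{n,k})$, and a Vandermonde elimination over these $k$ relations isolates $e_r(\xx_n)$ for $n-k<r\le n$ as a top degree component. This is the same idea as the paper's, which packages the Vandermonde elimination into a single formula: the coefficient of $t^r$ in $\prod_i(1+x_it)\big/\prod_{j\le k}(1+\alpha_jt)$, namely $\sum_{j\ge0}(-1)^je_{r-j}(\xx_n)h_j(\alpha_1,\dots,\alpha_k)$, vanishes on $Z_{n,k}$ and has top component $e_r(\xx_n)$.

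Your first half (for the $h_k$ generators) has a genuine gap, and you have correctly located it: the ``recursion bookkeeping'' you defer does not close up. The inductive hypothesis only supplies a single polynomial $f_{i-1}\in\II(Z_{n,k})$ with $\tau(f_{i-1})=h_k(x_1,\dots,x_{i-1})$, i.e.\ control over the degree-$k$ top piece. The identity $h_k(x_1,\dots,x_i)=\sum_{d=0}^k x_i^d\,h_{k-d}(x_1,\dots,x_{i-1})$ requires degree-$(k-d)$ pieces $h_{k-d}(x_1,\dots,x_{i-1})$ for all $d\ge1$, which are of strictly smaller degree than $k$ and are not produced by the induction. Meanwhile, any $\FF$-linear combination $\sum_ac_ax_i^af_{i-1}$ has top degree component $c_Ax_i^Ah_k(x_1,\dots,x_{i-1})$ with $A$ the largest index with $c_A\neq0$; this is a {\em product} $x_i^A\cdot h_k(x_1,\dots,x_{i-1})$, never $h_k(x_1,\dots,x_i)$. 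And the one-variable vanishing polynomial $\prod_{j=1}^{k+i-1}(x_i-\alpha_j)$ has degree $k+i-1>k$ when $i\ge2$, so adding multiples of it cannot contribute a degree-$k$ top piece either. So the witness you seek cannot be assembled from the pieces you allow.

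The paper sidesteps the recursion entirely by exhibiting the witness in closed form: for each fixed $i$, the polynomial
$\sum_{j\ge0}(-1)^jh_{k-j}(x_1,\dots,x_i)\,e_j(\alpha_1,\dots,\alpha_{k+i-1})$,
which is the coefficient of $t^k$ in the rational function $\prod_{j\le k+i-1}(1-\alpha_jt)\big/\prod_{\ell\le i}(1-x_\ell t)$. When $(x_1,\dots,x_n)\in Z_{n,k}$, the three conditions on $Z_{n,k}$ make the $i$ denominator factors cancel against $i$ numerator factors, leaving a polynomial of degree $k-1$ in $t$; hence the coefficient of $t^k$ vanishes, so the alternating sum lies in $\II(Z_{n,k})$, and its top degree component is the $j=0$ term $h_k(x_1,\dots,x_i)$. (Note this argument uses $k+i-1$ of the $\alpha$'s, not just $k$ of them, precisely to accommodate $z_\ell\in\{\alpha_1,\dots,\alpha_{k+\ell-1}\}$ for all $\ell\le i$.) If you want to salvage an inductive narrative, you would need to prove the strictly stronger statement that $h_m(x_1,\dots,x_i)\in\TT(Z_{n,k})$ for \emph{all} $m\ge k$ simultaneously, but even then the degree mismatch with the one-variable vanishing polynomial persists; the generating function identity is the cleanest route.
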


\begin{proof}
It suffices to show that every generator of $J_{n,k}$ arises as the top degree component of a polynomial
in $\II(Z_{n,k})$.  Let us first consider the generators $h_k(x_1), h_k(x_1, x_2), \dots, h_k(x_1, x_2, \dots, x_n)$.

For $1 \leq i \leq n$, we claim that
\begin{equation}
\label{i-membership}
\sum_{j \geq 0} (-1)^j h_{k-j}(x_1, x_2, \dots, x_i) e_j(\alpha_1, \alpha_2, \dots, \alpha_{k+i-1}) \in \II(Z_{n,k}).
\end{equation}
Indeed, this alternating sum is the coefficient of $t^k$ in the power series expansion of the rational function
\begin{equation}
\frac{(1-\alpha_1 t)(1 - \alpha_2 t) \cdots (1 - \alpha_{k+i-1} t)}{(1 - x_1 t) (1 - x_2 t) \cdots (1 - x_i t)}.
\end{equation}
If $(x_1, \dots, x_n) \in Z_{n,k}$, by the definition of $Z_{n,k}$ the
 terms in the denominator cancel with $i$ terms in the numerator, yielding a polynomial
in $t$ of degree $k-1$.  
The assertion (\ref{i-membership}) follows.
Taking the highest degree component, we get
$h_k(x_1, x_2, \dots, x_i) \in \TT(Z_{n,k})$.

Next, we show $e_r(x_1, \dots, x_n) \in \TT(Z_{n,k})$ for $n-k < r \leq n$.  To prove this, we claim that
\begin{equation}
\label{i-membership-two}
\sum_{j \geq 0} (-1)^j e_{r-j}(x_1, \dots, x_n) h_j(\alpha_1, \dots, \alpha_{n+k-1}) \in \II(Z_{n,k}).
\end{equation}
Indeed, this alternating sum is the coefficient of $t^r$ in the rational function
\begin{equation}
\frac{(1 + x_1 t) (1 + x_2 t) \cdots (1 + x_n t)}{(1 + \alpha_1 t) (1 + \alpha_2 t) \cdots (1 + \alpha_k t)}.
\end{equation}
If $(x_1, \dots, x_n) \in Z_{n,k}$, the terms in the denominator cancel with $k$ terms in the numerator, yielding a polynomial
in $t$ of degree $n-k$.  Since $r > n-k$, the assertion (\ref{i-membership-two}) follows.
Taking the highest degree component, we get
$e_r(x_1, \dots, x_n) \in \TT(Z_{n,k})$.
\end{proof}

\subsection{The Hilbert series of $S_{n,k}$}
Let $<$ be the $\neglex$ term order on $\FF[\xx_n]$.
We are ready to execute Step 3 of our strategy and describe the standard monomial basis 
of the quotient $S_{n,k}$.  To do so, we recall the definition of `skip monomials' in $\FF[\xx_n]$ of \cite{HRS}.

Let $S = \{s_1 < \cdots < s_m \} \subseteq [n]$ be a set.  Following \cite[Defn. 3.2]{HRS},
the {\em skip monomial} $\xx(S)$ is the monomial in $\FF[\xx_n]$ given by
\begin{equation}
\xx(S) := x_{s_1}^{s_1} x_{s_2}^{s_2 - 1} \cdots x_{s_m}^{s_m-m+1}.
\end{equation}
For example, we have $\xx(2578) = x_2^2 x_5^4 x_7^5 x_8^5$.  The adjective `skip' refers to the fact that 
the exponent sequence $\xx(S)$ increases whenever the set $S$ skips a letter.
Our variable order convention will require us to consider the {\em reverse skip monomial}
\begin{equation}
\xx(S)^* := x_{n-s_1+1}^{s_1} x_{n-s_2+1}^{s_2-1} \cdots x_{n-s_m+1}^{s_m-m+1}.
\end{equation}
For example, if $n = 9$ we have $\xx(2578)^* = x_8^2 x_5^4 x_3^5 x_2^5$. 
The following definition is the reverse of \cite[Defn. 4.4]{HRS}.

\begin{defn}
\label{nonskip}
Let $k \leq n$ be positive integers.  A monomial $m \in \FF[\xx_n]$ is {\em $(n,k)$-reverse nonskip} if
\begin{itemize}
\item $x_i^k \nmid m$ for $1 \leq i \leq n$, and
\item $\xx(S)^* \nmid m$ for all $S \subseteq [n]$ with $|S| = n+k-1$.
\end{itemize}
Let $\CCC_{n,k}$ denote the collection of all $(n,k)$-reverse nonskip monomials in $\FF[\xx_n]$.
\end{defn}

There is some redundancy in Definition~\ref{nonskip}.  In particular, if $n \in S$, the power of $x_1$
in $\xx(S)^*$  where $|S| = n-k+1$ is $x_1^k$, so that we need only consider those sets $S$ with $n \notin S$.

\begin{theorem}
\label{standard-monomial-basis}
Let $\FF$ be any field and $\leq$ be the $\neglex$ term order on $\FF[\xx_n]$.

The standard monomial basis of $S_{n,k} = {\FF[\xx_n]} / {J_{n,k}}$ with respect to $\leq$ is $\CCC_{n,k}$.
\end{theorem}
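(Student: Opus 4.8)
The strategy is to sandwich $\dim_\FF S_{n,k}$ between $|\OP_{n,k}|$ and $|\CCC_{n,k}|$ and to observe that these two numbers coincide. The lower bound $\dim_\FF S_{n,k}\ge|\OP_{n,k}|$ is already available: Lemma~\ref{j-contained-in-t} gives $J_{n,k}\subseteq\TT(Z_{n,k})$, hence a surjection $S_{n,k}\twoheadrightarrow\FF[\xx_n]/\TT(Z_{n,k})$ whose target has dimension $|Z_{n,k}|=|\OP_{n,k}|$ (after enlarging $\FF$ if needed, which is harmless by Lemma~\ref{algebraic-swindle}). For the count of $\CCC_{n,k}$, note that the variable substitution $x_i\mapsto x_{n+1-i}$ is a bijection on the set of all monomials of $\FF[\xx_n]$ which permutes $\{x_1^k,\dots,x_n^k\}$ and carries each skip monomial $\xx(S)$ to the reverse skip monomial $\xx(S)^*$; it therefore restricts to a bijection between the $(n,k)$-nonskip monomials of \cite[Defn.~4.4]{HRS} and $\CCC_{n,k}$. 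Since the former are a $\QQ$-basis of $R_{n,k}$ and $\dim_\QQ R_{n,k}=|\OP_{n,k}|$ by \cite[Thm.~4.11, Thm.~4.13]{HRS}, we obtain $|\CCC_{n,k}|=|\OP_{n,k}|$.

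The remaining task is the upper bound $\dim_\FF S_{n,k}\le|\CCC_{n,k}|$, and for this it suffices to show that every monomial outside $\CCC_{n,k}$ --- that is, one divisible by some $x_i^k$ or by some $\xx(S)^*$ with $|S|=n-k+1$ --- lies in $\initial_<(J_{n,k})$, so that the standard monomials of $S_{n,k}$ are contained in $\CCC_{n,k}$. The powers $x_i^k$ are immediate: among the degree-$k$ monomials in $x_1,\dots,x_i$ occurring in $h_k(x_1,\dots,x_i)\in J_{n,k}$ the $\neglex$-largest is $x_i^k$ (the $\neglex$ order weighs the exponent of the highest-indexed variable present most heavily), so $\initial_<(h_k(x_1,\dots,x_i))=x_i^k$. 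For the reverse skip monomials one mimics the derivation of the reduced Gr\"obner basis of $I_{n,k}$ in \cite{HRS}: for each admissible $S$ one produces $g_S\in J_{n,k}$ with $\initial_<(g_S)=\xx(S)^*$. This construction uses the generators $e_n,\dots,e_{n-k+1}$ that $I_{n,k}$ and $J_{n,k}$ have in common --- invoking the recursion $e_r(x_i,\dots,x_n)=e_r(x_{i+1},\dots,x_n)+x_i\,e_{r-1}(x_{i+1},\dots,x_n)$ to place the tail sums $e_r(x_i,\dots,x_n)$ inside $J_{n,k}$ --- together with $h_k(x_1),h_k(x_1,x_2),\dots$ playing the role of the generators $x_1^k,x_2^k,\dots$ of \cite{HRS}. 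Since $h_k(x_1,\dots,x_j)$ and $x_j^k$ have the same $\neglex$ leading term while $h_k(x_1,\dots,x_j)-x_j^k$ is a sum of strictly $\neglex$-smaller monomials, making this replacement throughout the \cite{HRS} construction alters its polynomials only in lower-order terms and leaves their leading terms --- in particular $\xx(S)^*$ --- unchanged. Granting this, $|\OP_{n,k}|\le\dim_\FF S_{n,k}\le|\CCC_{n,k}|=|\OP_{n,k}|$, equality holds throughout, and the standard monomials of $S_{n,k}$, being a subset of $\CCC_{n,k}$ of the same finite cardinality, are exactly $\CCC_{n,k}$.

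The crux is the construction of the $g_S$, specifically the verification that substituting $h_k(x_1,\dots,x_j)$ for $x_j^k$ introduces no spurious larger leading term. This hinges on the fact that in the \cite{HRS} construction the powers $x_j^k$ enter only through products $p\cdot x_j^k$ whose $\neglex$ leading terms do not exceed $\xx(S)^*$; after the substitution such a term becomes $p\cdot h_k(x_1,\dots,x_j)$, still with $\neglex$ leading term $\initial_<(p)\cdot x_j^k$, so every leading-term cancellation occurs exactly as before while the discrepancies introduced are uniformly strictly $\neglex$-smaller than $\xx(S)^*$. One could instead run a direct descending induction on the $\neglex$ order to construct the $g_S$ from scratch, but this amounts to re-deriving the computation of \cite{HRS}.
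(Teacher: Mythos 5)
Your overall strategy is the paper's: sandwich $\dim_\FF S_{n,k}$ between $|\OP_{n,k}|$ and $|\CCC_{n,k}|$, using the point-set lower bound from Lemmas~\ref{algebraic-swindle} and~\ref{j-contained-in-t} for one side and the variable-reversal bijection with the $(n,k)$-nonskip monomials of \cite{HRS} for the count. The observation $\initial_<(h_k(x_1,\dots,x_i))=x_i^k$ is also correct. The gap is precisely in the step you yourself flag as the crux: producing, for each $S\subseteq[n]$ with $|S|=n-k+1$, an element of $J_{n,k}$ with $\neglex$-leading term $\xx(S)^*$. Your substitution argument --- replace $x_j^k$ by $h_k(x_1,\dots,x_j)$ throughout the \cite{HRS} derivation --- rests on the unverified assertion that every product $p\cdot x_j^k$ appearing in that derivation already has $\neglex$-leading term at most $\xx(S)^*$. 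This cannot be taken for granted: in a Gr\"obner computation individual summands routinely have larger leading terms that cancel, and after the substitution such cancellations could expose terms of $p\cdot\bigl(h_k(x_1,\dots,x_j)-x_j^k\bigr)$ exceeding $\xx(S)^*$. As written, the proposal has a hole at its central claim, and you acknowledge as much without closing it.

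The paper sidesteps this entirely, and more cheaply than you anticipate. The polynomial with leading term $\xx(S)^*$ is the Demazure character $\kappa_{\gamma(S)^*}(\xx_n)$, and \cite[Lem.~3.4]{HRS} places it inside the sub-ideal $\langle e_n,\dots,e_{n-k+1}\rangle$, which $I_{n,k}$ and $J_{n,k}$ share --- so membership in $J_{n,k}$ is automatic, with no $h_k$-for-$x^k$ substitution and no cancellation bookkeeping at all. Then \cite[Lem.~3.5]{HRS}, after reversing variables, supplies $\initial_<(\kappa_{\gamma(S)^*}(\xx_n))=\xx(S)^*$ for $S\subseteq[n-1]$, and the case $n\in S$ needs no separate construction since then $x_1^k\mid\xx(S)^*$, which is already covered by the $h_k$ generators. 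Citing these two lemmas directly makes the step you call the crux evaporate; the substitution machinery is both unverified and unnecessary.
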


\begin{proof}
By the definition of $\neglex$, we have
\begin{equation}
\initial_<(h_k(x_1, x_2, \dots, x_i)) = x_i^k \in \initial_<(J_{n,k}).
\end{equation}
By \cite[Lem. 3.4, Lem. 3.5]{HRS} we also have $\xx(S)^* \in \initial_<(J_{n,k})$ whenever $S \subseteq [n]$
satisfies $|S| = n-k+1$.  It follows that $\CCC_{n,k}$ contains the standard monomial basis of $S_{n,k}$.

To prove that $\CCC_{n,k}$ is the standard monomial basis of $S_{n,k}$, it suffices to show
$|\CCC_{n,k}| \leq \dim(S_{n,k})$.  Thanks to Lemma~\ref{algebraic-swindle}, we may replace $\FF$ by an extension
if necessary to assume that $\FF$ contains at least $n+k-1$ elements.
By Lemma~\ref{j-contained-in-t}, we have 
\begin{equation}
\dim(S_{n,k}) = \dim \left( {\FF[\xx_n]} / {J_{n,k}} \right) 
\geq \dim \left( {\FF[\xx_n]} / {\TT(Z_{n,k})} \right) = |Z_{n,k}| = |\OP_{n,k}|.
\end{equation}
On the other hand, \cite[Thm. 4.9]{HRS} implies (after reversing variables) that
$|\OP_{n,k}| = |\CCC_{n,k}|$.
\end{proof}

When $k = n$, the collection $\CCC_{n,n}$ consists of sub-staircase monomials 
$x_1^{a_1} \cdots x_n^{a_n}$ whose exponent sequences satisfy $0 \leq a_i \leq n-i$; 
this is the basis for the coinvariant algebra obtained by E. Artin~\cite{Artin} using Galois theory.
Let us mention an analogous characterization of $\CCC_{n,k}$ which was derived in \cite{HRS}.

Recall that a {\em shuffle} of two sequences $(a_1, \dots, a_r)$ and $(b_1, \dots, b_s)$ 
is an interleaving $(c_1, \dots, c_{r+s})$ of these sequences which preserves the relative order
of the $a$'s and the $b$'s.  
A {\em $(n,k)$-staircase} is a shuffle of the sequences
$(k-1, k-2, \dots, 1, 0)$ and $(k-1, k-1, \dots, k-1)$, where the second sequence has $n-k$ copies of $k-1$.
For example, the $(5,3)$-staircases are the shuffles of $(2,1,0)$ and $(2,2)$:
\begin{center}
$(2,1,0,2,2), (2,1,2,0,2), (2,2,1,0,2), (2,1,2,2,0), (2,2,1,2,0),$ and $(2,2,2,1,0)$.
\end{center}
The following theorem is just the reversal of \cite[Thm. 4.13]{HRS}.

\begin{corollary}
\label{staircase-standard}  (\cite[Thm. 4.13]{HRS})
The monomial basis $\CCC_{n,k}$ of $S_{n,k}$ is the set of monomials $x_1^{a_1} x_2^{a_2} \cdots x_n^{a_n}$
in $\FF[\xx_n]$  whose exponent sequences $(a_1, a_2, \dots, a_n)$
are componentwise $\leq$ {\em some} $(n,k)$-staircase.
\end{corollary}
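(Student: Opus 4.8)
The plan is to derive this from \cite[Thm.~4.13]{HRS} by transporting that result through the variable-reversing $\FF$-algebra automorphism $\rho$ of $\FF[\xx_n]$ determined by $\rho(x_i) = x_{n+1-i}$. By Theorem~\ref{standard-monomial-basis}, $\CCC_{n,k}$ is precisely the set of $(n,k)$-reverse nonskip monomials of Definition~\ref{nonskip}. Unwinding the definitions, $\rho(x_i^k) = x_{n+1-i}^k$ and $\xx(S)^* = \rho(\xx(S))$ for every $S \subseteq [n]$; since $\rho$ permutes the monomials of $\FF[\xx_n]$ and therefore preserves divisibility, it follows that $\rho$ carries the family of $(n,k)$-nonskip monomials of \cite[Defn.~4.4]{HRS} bijectively onto $\CCC_{n,k}$. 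Hence it suffices to apply $\rho$ to the description of the nonskip monomials provided by \cite[Thm.~4.13]{HRS}.

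That theorem states that a monomial is $(n,k)$-nonskip in the sense of \cite{HRS} if and only if its exponent sequence is componentwise $\leq$ some shuffle of $(0,1,\dots,k-1)$ with $n-k$ copies of $k-1$. Applying $\rho$: it sends $x_1^{b_1}\cdots x_n^{b_n}$ to $x_1^{b_n}\cdots x_n^{b_1}$, so it reverses exponent sequences; reversal of sequences is an automorphism of the componentwise partial order; the reversal of a shuffle of two sequences is a shuffle of their reversals; and $(0,1,\dots,k-1)$ reverses to $(k-1,k-2,\dots,1,0)$ while a block of $n-k$ copies of $k-1$ is fixed. Therefore $\rho$ matches the shuffles appearing in \cite[Thm.~4.13]{HRS} bijectively with the $(n,k)$-staircases of the present paper, and pushing the cited characterization through $\rho$ shows that $\CCC_{n,k}$ is exactly the set of monomials $x_1^{a_1}\cdots x_n^{a_n}$ whose exponent sequence $(a_1,\dots,a_n)$ is componentwise $\leq$ some $(n,k)$-staircase. (For $k=n$ the unique $(n,n)$-staircase is $(n-1,n-2,\dots,1,0)$, recovering the Artin basis.)

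I do not expect a genuine obstacle, since all of the real work is already done: Theorem~\ref{standard-monomial-basis} identifies $\CCC_{n,k}$ as a standard monomial basis via Gr\"obner theory, and \cite[Thm.~4.13]{HRS} supplies the combinatorial equivalence between the nonskip and staircase-dominated descriptions. The only thing requiring care is bookkeeping with the coordinate reversal built into Definition~\ref{nonskip} --- checking that $\xx(S)^* = \rho(\xx(S))$ and that our $(n,k)$-staircases are exactly the reversals of the shuffles used in \cite{HRS}. If one wanted a proof independent of \cite[Thm.~4.13]{HRS}, one could instead rewrite the reverse skip monomial attached to an $(n-k+1)$-subset $\{u_1 < \dots < u_{n-k+1}\}$ as $\prod_{i} x_{u_i}^{k-(u_i-i)}$, so that Definition~\ref{nonskip} becomes the two conditions ``$a_i \leq k-1$ for all $i$'' and ``no $(n-k+1)$-subset $\{u_1 < \dots < u_{n-k+1}\}$ satisfies $a_{u_i} \geq k-(u_i-i)$ for all $i$'', and then verify that this pair of conditions is equivalent to ``$a_i \leq k-1$ for all $i$'' together with the existence of a $k$-subset $\{p_1 < \dots < p_k\}$ with $a_{p_j} \leq k-j$ for all $j$ (the positions carrying the decreasing block $(k-1,\dots,1,0)$ of a dominating $(n,k)$-staircase). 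This last step is a Hall-type duality between $k$-subsets and $(n-k+1)$-subsets of $[n]$ --- resting on $(n-k+1)+k=n+1$ --- and is really just a reproof of (the reversal of) \cite[Thm.~4.13]{HRS}, so I would present the coordinate-reversal argument as the proof and leave the direct verification as a remark.
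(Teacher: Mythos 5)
Your proposal is correct and takes exactly the route the paper intends: the paper offers no proof at all, only the remark that the statement ``is just the reversal of \cite[Thm.~4.13]{HRS},'' and you have simply made that remark rigorous. Your verification that the variable-reversing automorphism $\rho$ satisfies $\rho(\xx(S)) = \xx(S)^*$, that it carries the $(n,k)$-nonskip monomials of \cite{HRS} bijectively onto $\CCC_{n,k}$ (using Theorem~\ref{standard-monomial-basis} to identify $\CCC_{n,k}$ with the reverse-nonskip monomials), and that sequence reversal respects the componentwise order and matches the shuffle families on both sides is precisely the bookkeeping the authors elided. (Incidentally, you implicitly corrected a typo in Definition~\ref{nonskip}, which reads $|S| = n+k-1$ where it must mean $|S| = n-k+1$, as the text following that definition and Theorem~\ref{standard-monomial-basis} both confirm.)
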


For example, consider the case $(n,k) = (4,2)$.  The $(4,2)$-staircases are the shuffles of $(1,0)$ and $(1,1)$:
\begin{center}
$(1,0,1,1), (1,1,0,1),$ and $(1,1,1,0)$.
\end{center}
It follows that 
\begin{equation*}
\CCC_{4,2} = \{1, x_1, x_2, x_3, x_4, x_1 x_2, x_1 x_3, x_1 x_4, x_2 x_3, x_2 x_4, x_3 x_4, x_1 x_3 x_4, 
x_1 x_2 x_4, x_1 x_2 x_3 \}
\end{equation*}
is the standard monomial basis of $S_{4,2}$ with respect to $\neglex$.  Consequently, we have the Hilbert series
\begin{equation*}
\Hilb(S_{4,2};q) = 1 + 4q + 6q^2 + 3q^3.
\end{equation*}

We can also describe a Gr\"obner basis of the ideal $J_{n,k}$.
For $\gamma = (\gamma_1, \dots, \gamma_n)$  a {\em weak composition} (i.e., possibly containing $0$'s)
of length $n$, let $\kappa_{\gamma}(\xx_n) \in \FF[\xx_n]$ be the associated {\em Demazure character}
(see e.g. \cite[Sec. 2.4]{HRS}).  

If $S \subseteq [n]$, let $\gamma(S) = (\gamma_1, \dots, \gamma_n)$ be the exponent sequence of the corresponding
skip monomial $\xx(S)$.  That is, if $S = \{s_1 < \cdots < s_m\}$ we have
\begin{equation}
\gamma_i = \begin{cases}
s_j - j + 1 & \text{if $i = s_j \in S$} \\
0 & \text{if $i \notin S$}.
\end{cases}
\end{equation}
Let $\gamma(S)^* = (\gamma_n, \dots, \gamma_1)$ be the reverse of the weak composition $\gamma(S)$.
In particular, we can consider the Demazure character $\kappa_{\gamma(S)^*}(\xx_n) \in \FF[\xx_n]$.

\begin{theorem}
\label{groebner-basis}
Let $k \leq n$ be positive integers and
let $\leq$ be the $\neglex$ term order on $\FF[\xx_n]$.

The polynomials
\begin{equation*}
h_k(x_1), h_k(x_1, x_2), \dots, h_k(x_1, x_2, \dots, x_n)
\end{equation*}
together with the Demazure characters
\begin{equation*}
\kappa_{\gamma(S)^*}(\xx_n) \in \FF[\xx_n]
\end{equation*}
for all $S \subseteq [n-1]$ satisfying $|S| = n-k+1$, form a Gr\"obner basis for the ideal $J_{n,k}$.

When $k < n$, this Gr\"obner basis is minimal.
\end{theorem}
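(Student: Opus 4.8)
The plan is to verify the two defining conditions of a Gr\"obner basis and then check minimality by a direct divisibility argument. Write $G$ for the proposed generating set. That each $h_k(x_1,\dots,x_i)$ lies in $J_{n,k}$ is immediate from the definition, and the excerpt already records $\initial_<(h_k(x_1,\dots,x_i)) = x_i^k$ in the $\neglex$ order. The substantive work concerns the Demazure characters, and it reduces to two claims: first, $\kappa_{\gamma(S)^*}(\xx_n) \in J_{n,k}$ for every $S \subseteq [n-1]$ with $|S| = n-k+1$; and second, $\initial_<(\kappa_{\gamma(S)^*}(\xx_n)) = \xx(S)^* = \xx^{\gamma(S)^*}$ with leading coefficient $1$. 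The second claim is an intrinsic property of key polynomials: for any weak composition $\gamma$, every monomial $\xx^\beta$ occurring in $\kappa_\gamma$ satisfies $\beta_1 + \cdots + \beta_j \geq \gamma_1 + \cdots + \gamma_j$ for all $j$, with equality throughout only when $\beta = \gamma$; since $\neglex$ compares the exponents of $x_n, x_{n-1}, \dots$ in that order, this forces $\xx^\gamma$ to be the leading monomial, and its multiplicity is $1$. This is the key polynomial analog of \cite[Lem.~3.4]{HRS} after reversing the variable order. The first claim is the analog of \cite[Lem.~3.5]{HRS}: I would rerun the HRS membership argument with the role played there by the parameter $x_i^k$ taken over by $h_k(x_1, \dots, x_i)$, which lies in $J_{n,k}$ and shares its $\neglex$-leading term.

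Granting these, the Gr\"obner property is formal. The leading terms of the elements of $G$ generate the monomial ideal $\langle x_1^k, \dots, x_n^k \rangle + \langle \xx(S)^* : S \subseteq [n-1],\ |S| = n-k+1 \rangle$. By the remark following Definition~\ref{nonskip}, $\xx(S)^*$ is divisible by $x_1^k$ whenever $n \in S$, so this monomial ideal equals $\langle x_i^k : i \in [n] \rangle + \langle \xx(S)^* : S \subseteq [n],\ |S| = n-k+1 \rangle$, which by (the proof of) Theorem~\ref{standard-monomial-basis} is precisely $\initial_<(J_{n,k})$. Since every element of $G$ lies in $J_{n,k}$, it follows that $\langle \initial_<(g) : g \in G \rangle = \initial_<(J_{n,k})$, so $G$ is a Gr\"obner basis for $J_{n,k}$.

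For minimality when $k < n$: all leading coefficients are $1$ (the $h_k$'s are sums of distinct monomials, and the coefficient of $\xx^\gamma$ in $\kappa_\gamma$ is $1$), so it remains to check that no leading monomial divides another. This is clear among the $x_i^k$. A power $x_i^k$ cannot divide any $\xx(S)^*$: since $S \subseteq [n-1]$ and $|S| = n-k+1$, every exponent in $\xx(S)^*$ has the form $s_j - j + 1 \leq \max(S) - |S| + 1 \leq (n-1) - (n-k+1) + 1 = k-1$. Conversely no $\xx(S)^*$ divides a power $x_i^k$, because $k < n$ forces $|S| = n-k+1 \geq 2$, so $\xx(S)^*$ involves at least two variables. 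Finally, if $\xx(S)^*$ divides $\xx(T)^*$ with $S, T \subseteq [n-1]$ of the same size, then, all exponents of a reverse skip monomial being positive, the supports satisfy $\{n+1-s : s \in S\} \subseteq \{n+1-t : t \in T\}$, hence $S \subseteq T$ and so $S = T$. Thus $G$ is minimal. (When $k = n$ minimality genuinely fails: then $|S| = 1$ and $\xx(\{s\})^* = x_{n-s+1}^s$ with $s \leq n-1$ divides $x_{n-s+1}^n = \initial_<(h_n(x_1, \dots, x_{n-s+1}))$, which is why the hypothesis $k < n$ is needed.) The one genuinely nontrivial point in the whole argument is the first claim above: unlike in \cite{HRS}, the parameters $x_i^k$ are \emph{not} in $J_{n,k}$ --- e.g.\ $x_2^2 \equiv -x_1 x_2 \neq 0$ in $S_{3,2}$ --- so the membership $\kappa_{\gamma(S)^*} \in J_{n,k}$ must be argued using the $h_k(x_1, \dots, x_i)$ and their interaction with the symmetric generators $e_r(\xx_n)$, rather than by quoting \cite{HRS} verbatim.
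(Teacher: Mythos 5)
Your proposal follows the paper's proof quite closely: show the generators lie in $J_{n,k}$, compute their $\neglex$ leading terms, conclude via Theorem~\ref{standard-monomial-basis} that those leading terms generate $\initial_<(J_{n,k})$, and check minimality by inspecting divisibility among the leading monomials. Your minimality argument is more explicit than the paper's two-sentence version: the inequality $s_j - j + 1 \leq \max(S) - |S| + 1 \leq k-1$ supplying the exponent bound, the observation that $|S| = n - k + 1 \geq 2$ when $k < n$, and the support comparison among the $\xx(S)^*$ are all correct and fill in what the paper compresses. Your parenthetical explaining why the hypothesis $k < n$ is needed is also right and is a useful remark.

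Where you diverge from the paper is the closing claim that the membership $\kappa_{\gamma(S)^*}(\xx_n) \in J_{n,k}$ ``must be argued using the $h_k(x_1, \dots, x_i)$ and their interaction with the symmetric generators $e_r(\xx_n)$, rather than by quoting \cite{HRS} verbatim.'' This is not the case. The HRS membership argument for $\kappa_{\gamma(S)}$ with $n \notin S$ --- exactly the range $S \subseteq [n-1]$ in the statement --- places the Demazure character into the ideal generated by $e_{n-k+1}(\xx_n), \dots, e_n(\xx_n)$ \emph{alone}; the parameters $x_i^k$ of $I_{n,k}$ never enter that case. Since those $e_r$'s are common generators of $I_{n,k}$ and $J_{n,k}$, the citation of \cite[Lem.~3.4, Eqn.~3.4]{HRS} goes through verbatim after variable reversal, and the paper does just cite it. As a concrete check, take $(n,k) = (4,3)$ and $S = \{1,3\}$: then $\gamma(S)^* = (0,2,0,1)$ and $\kappa_{(0,2,0,1)}(\xx_4) = (x_1 + x_2)\, e_2(\xx_4) - e_3(\xx_4)$, so it lies in $\langle e_2, e_3, e_4 \rangle \subseteq J_{4,3}$ without invoking any $h_3(x_1,\dots,x_i)$. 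Your observation that $x_i^k \notin J_{n,k}$ for $i \geq 2$ is true, but it flags a difficulty that does not arise in the range of $S$ the theorem considers. (You also swap the roles of Lemmas 3.4 and 3.5 of \cite{HRS} --- the paper uses 3.4 for membership and 3.5 for the leading term --- though of course you could not have known the numbering without the source.)
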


For example, if $(n,k) = (6,4)$, a Gr\"obner basis of $J_{6,4} \subseteq \FF[\xx_6]$ is given by the polynomials
\begin{center}
$h_4(x_1), h_4(x_1, x_2), h_4(x_1, x_2, x_3), h_4(x_1, x_2, x_3, x_4), h_4(x_1, x_2, x_3, x_4, x_5),
h_4(x_1,x_2,x_3,x_4,x_5,x_6)$
\end{center}
together with the Demazure characters
\begin{center}
$\kappa_{(0,0,0,1,1,1)}(\xx_6), \kappa_{(0,0,2,0,1,1)}(\xx_6), \kappa_{(0,3,0,0,1,1)}(\xx_6),
\kappa_{(0,0,2,2,0,1)}(\xx_6), \kappa_{(0,3,0,2,0,1)}(\xx_6),$ \\
$\kappa_{(0,3,3,0,0,1)}(\xx_6), \kappa_{(0,0,2,2,2,0)}(\xx_6), \kappa_{(0,3,0,2,2,0)}(\xx_6),
\kappa_{(0,3,3,0,2,0)}(\xx_6), \kappa_{(0,3,3,3,0,0)}(\xx_6)$.
\end{center}

\begin{proof}
We need to show that the polynomials in question lie in the ideal $J_{n,k}$.  This is clear for the polynomials
$h_k(x_1, \dots, x_i)$.  For the Demazure characters, we apply \cite[Lem. 3.4]{HRS} 
(and in particular \cite[Eqn. 3.4]{HRS}) to see that $\kappa_{\gamma(S)^*}(\xx_n) \in J_{n,k}$ whenever 
$S \subseteq [n-1]$ satisfies $|S| = n-k+1$.

Next we examine the leading terms of the polynomials in question.  It is evident that
\begin{equation*}
\initial_<(h_k(x_1, \dots, x_i)) = x_i^k.  
\end{equation*}
After applying variable reversal to \cite[Lem. 3.5]{HRS}, we see that
\begin{equation*}
\initial_<(\kappa_{\gamma(S)^*}(\xx_n)) = \xx(S)^*.
\end{equation*}
By Theorem~\ref{standard-monomial-basis} and the remarks following Definition~\ref{nonskip}, it follows
that these monomials generate the initial ideal $\initial_<(J_{n,k})$ of $J_{n,k}$.

When $k < n$, observe that for $S \subseteq [n-1]$ with $|S| = n-k+1$, the monomial 
$\xx(S)^*$ has support $\{ i \,:\, n-i+1 \in S \}$.  Moreover, the monomial 
$\xx(S)^*$ does not contain any exponents $\geq k$ since $n \notin S$.  
The minimality of the Gr\"obner basis follows.
\end{proof}

Theorem~\ref{groebner-basis} is the 0-Hecke analog of \cite[Thm. 4.14]{HRS}.  
Unlike the case of \cite[Thm. 4.14]{HRS}, the Gr\"obner basis of Theorem~\ref{groebner-basis}
is not reduced.  The authors do not have a conjecture for the reduced Gr\"obner basis
for the ideal $J_{n,k}$.
The work of \cite{HRS} gives us a formula for the Hilbert series of $S_{n,k}$.

\begin{theorem}
\label{hilbert-series}
Let $k \leq n$ be positive integers.  We have
$\Hilb(S_{n,k};q) = \rev_q([k]!_q \cdot \Stir_q(n,k))$.
\end{theorem}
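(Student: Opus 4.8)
The plan is to read the Hilbert series straight off the standard monomial basis and then identify the resulting generating function with a quantity already understood. By Theorem~\ref{standard-monomial-basis} the quotient $S_{n,k} = {\FF[\xx_n]}/{J_{n,k}}$ has the homogeneous $\FF$-basis $\CCC_{n,k}$, so
\[
\Hilb(S_{n,k};q) = \sum_{m \in \CCC_{n,k}} q^{\deg(m)} .
\]
Thus everything reduces to computing the degree generating function of the set of $(n,k)$-reverse nonskip monomials.

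To do this I would invoke Corollary~\ref{staircase-standard}, which describes $\CCC_{n,k}$ purely by exponent vectors: it is the set of monomials $x_1^{a_1}\cdots x_n^{a_n}$ whose exponent sequence $(a_1,\dots,a_n)$ is componentwise $\le$ some $(n,k)$-staircase. The substitution $x_i \mapsto x_{n+1-i}$ reverses exponent sequences, preserves total degree, and — since Corollary~\ref{staircase-standard} is precisely the variable reversal of \cite[Thm.~4.13]{HRS} — carries $\CCC_{n,k}$ bijectively onto the Artin-type monomial basis $\AAA_{n,k}$ of the Haglund--Rhoades--Shimozono quotient $R_{n,k}$ (over $\QQ$). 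Hence
\[
\Hilb(S_{n,k};q) = \sum_{m \in \AAA_{n,k}} q^{\deg(m)} = \Hilb(R_{n,k};q),
\]
and it remains only to quote the computation of $\Hilb(R_{n,k};q)$ from \cite{HRS}, which gives $\Hilb(R_{n,k};q) = \rev_q\!\big([k]!_q \cdot \Stir_q(n,k)\big)$. A self-contained variant keeps the argument inside the present paper: the Proposition proved above already identifies $\rev_q\!\big([k]!_q \cdot \Stir_q(n,k)\big)$ with $\sum_{\sigma \in \OP_{n,k}} q^{\maj(\sigma)}$, so it would suffice to exhibit a degree-preserving bijection $\OP_{n,k} \to \CCC_{n,k}$ (for instance the one underlying the Garsia--Stanton-type basis of Section~\ref{Garsia}, which sends $\maj$ to total degree) and conclude $\Hilb(S_{n,k};q) = \sum_{\sigma} q^{\maj(\sigma)} = \rev_q\!\big([k]!_q \cdot \Stir_q(n,k)\big)$.

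I do not expect a substantive obstacle here: the content is the two cited inputs (Theorem~\ref{standard-monomial-basis}/Corollary~\ref{staircase-standard} and the Hilbert series of $R_{n,k}$ from \cite{HRS}), glued by an obviously degree-preserving variable reversal. The only point requiring care is bookkeeping: one must check that the various reversal conventions in play — the $\neglex$ order taken with respect to $x_n > \cdots > x_1$, the reverse skip monomials $\xx(S)^*$, and the reversal of \cite[Thm.~4.13]{HRS} used in Corollary~\ref{staircase-standard} — are mutually consistent, so that $\CCC_{n,k}$ is genuinely the variable-reversed Artin basis and no extra (or missing) reversal of the parameter $q$ is introduced. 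Once the two bases are matched, the equality of Hilbert series is immediate and the closed form follows.
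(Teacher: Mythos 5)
Your main argument is exactly the paper's proof: read the Hilbert series off the standard monomial basis $\CCC_{n,k}$ from Theorem~\ref{standard-monomial-basis}, observe via Corollary~\ref{staircase-standard} that variable reversal carries it degree-preservingly onto the Artin-type basis $\AAA_{n,k}$ of $R_{n,k}$, and quote the Hilbert series of $R_{n,k}$ from \cite{HRS}. Your extra remarks on checking the reversal conventions and the optional self-contained variant are sound but not needed; the core route matches the paper's.
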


\begin{proof}
By Theorem~\ref{standard-monomial-basis} and \cite[Thm. 4.13]{HRS}, the Hilbert series of $S_{n,k}$ equals the Hilbert series of $R_{n,k}$. 
Applying \cite[Thm. 4.10]{HRS} finishes the proof.
\end{proof}

\section{Garsia-Stanton type bases}
\label{Garsia}

Let $k \leq n$ be positive integers.
Given a composition $\alpha \models n$ and a length $n$ sequence $\ii = (i_1, \dots, i_n)$ 
of nonnegative integers, define a monomial $\xx_{\alpha,\ii} \in \FF[\xx_n]$ by
\begin{equation}
\xx_{\alpha,\ii} := \left( \prod_{j \in \Des(\alpha)} x_1 x_2 \cdots x_j \right) x_1^{i_1} x_2^{i_2} \cdots x_{n}^{i_{n}}.
\end{equation}
If $w \in \symm_n$ is a permutation and $\ii = (i_1, \dots, i_n)$ is a sequence of nonnegative integers, we 
define the {\em generalized Garsia-Stanton monomial}
$gs_{w,\ii} := w(\xx_{\alpha,\ii})$, where $\alpha \models n$ is characterized by $\Des(\alpha) = \Des(w)$.
The degree of $gs_{w,\ii}$ is given by $\deg(gs_{w,\ii}) = \maj(w) + |\ii|$, where
$|\ii| := i_1 + \cdots  + i_n$.

For example, let $(n,k) = (9,5)$, $w = 254689137 \in \symm_9$ and $\ii = (2,2,1,1,0,0,0,0,0)$.  We have $\Des(w) = \{2,6\}$, so that the composition $\alpha \models 9$ with
$\Des(\alpha) = \Des(w)$ is $\alpha = (2,4,3)$.  It follows that
\begin{equation*}
\xx_{\alpha,\ii} = (x_1 x_2) (x_1 x_2 x_3 x_4 x_5 x_6) (x_1^2 x_2^2 x_3^1 x_4^1).
\end{equation*}
The corresponding generalized GS monomial is
\begin{equation*}
gs_{w,\ii} = (x_2 x_5) (x_2 x_5 x_4 x_6 x_8 x_9) (x_2^2 x_5^2 x_4^1 x_6^1).
\end{equation*}

Haglund, Rhoades, and Shimozono introduced \cite[Defn. 5.2]{HRS} (using different notation) the following 
collection $\GS_{n,k}$ of monomials:
\begin{equation}
\GS_{n,k} := \{ gs_{w,\ii} \,:\, w \in \symm_n, \, k - \des(w) > i_1 \geq \cdots \geq i_{n-k} \geq 0 = i_{n-k+1} = \cdots = i_n\}.
\end{equation}
When $k = n$, we have $gs_{w,\ii}\in\GS_{n,n}$ if and only if $w\in\symm_n$ and $\ii=0^n$ is the sequence of $n$ zeros.
Garsia \cite{Garsia} proved that $\GS_{n,n}$ descends to a basis of the classical coinvariant algebra $R_n$.
Garsia and Stanton \cite{GS} later studied $\GS_{n,n}$ in the context of Stanley-Reisner theory. 
Extending Garsia's result,
Haglund et. al. proved that $\GS_{n,k}$ descends to a basis of $R_{n,k}$ \cite[Thm. 5.3]{HRS}.
We will prove that $\GS_{n,k}$ also descends to a basis of $S_{n,k}$.  In fact, we will 
prove that $\GS_{n,k}$ is just one of a family of bases of $S_{n,k}$.

Huang used isobaric Demazure operators to define a basis of the classical coinvariant algebra $R_n$
which is related to the classical GS basis $\GS_{n,n}$ by a unitriangular transition matrix \cite{Huang}.
We will modify $\GS_{n,k}$ to get a new basis of $S_{n,k}$ in an analogous way.
As in \cite{Huang}, our modified basis will be crucial in our analysis of the $H_n(0)$-structure of $S_{n,k}$.
This modified basis and $\GS_{n,k}$ itself will both belong to the following family of bases of $S_{n,k}$.

\begin{lemma}
\label{garsia-stanton-type-bases}
Let $\leq$ be the $\neglex$ term order on $\FF[\xx_n]$.
Let $\BBB_{n,k} = \{b_{w,\ii}\}$ be a set of polynomials indexed by pairs $(w, \ii)$ where $w \in \symm_n$
and $\ii = (i_1, \dots, i_n)$ satisfy 
\begin{equation*}
k - \des(w) > i_1 \geq \cdots \geq i_{n-k} \geq 0 = i_{n-k+1} = \cdots = i_n.  
\end{equation*}
Assume that any $b_{w,\ii} \in \BBB_{n,k}$
has the form
\begin{equation}
b_{w,\ii} = gs_{w,\ii} + \sum_{m < gs_{w,\ii}} c_m \cdot m,
\end{equation}
where the $c_m \in \FF$ are scalars which could depend on $(w,\ii)$.

We have that $\BBB_{n,k}$ descends to a basis of $S_{n,k}$.
\end{lemma}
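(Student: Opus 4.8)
The plan is to leverage the already-established fact (Theorem~\ref{standard-monomial-basis}) that $\dim_\FF(S_{n,k}) = |\OP_{n,k}| = |\GS_{n,k}|$, so that it suffices to prove $\BBB_{n,k}$ is linearly independent in $S_{n,k}$; since $|\BBB_{n,k}| = |\GS_{n,k}| = \dim(S_{n,k})$, linear independence will force $\BBB_{n,k}$ to be a basis. First I would recall the standard-monomial reduction machinery: every element of $S_{n,k}$ has a unique representative that is an $\FF$-linear combination of the standard monomials $\CCC_{n,k}$ (the $(n,k)$-reverse nonskip monomials), obtained by repeatedly replacing any occurrence of a leading term $\initial_<(g)$, for $g$ in the Gr\"obner basis of Theorem~\ref{groebner-basis}, by the lower-order tail $\initial_<(g) - g$. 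Write $[f]$ for this canonical reduction of $f \bmod J_{n,k}$; the key property is that $[f]$ is a combination of monomials each of which is $\leq \initial_<(f)$ in $\neglex$, and $[f] = \initial_<(f) + (\text{strictly smaller terms})$ whenever $\initial_<(f) \in \CCC_{n,k}$.

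The crux is then the following claim: for each pair $(w,\ii)$ as in the statement, $\initial_<(gs_{w,\ii}) = gs_{w,\ii}$ itself (these are monomials), and $gs_{w,\ii} \in \CCC_{n,k}$, i.e. the generalized GS monomials are standard monomials. This is essentially contained in the proof that $\GS_{n,k}$ descends to a basis of $R_{n,k}$ in \cite[Thm.~5.3]{HRS} — after reversing variables, the exponent sequence of $gs_{w,\ii}$ is componentwise $\leq$ some $(n,k)$-staircase, so by Corollary~\ref{staircase-standard} it lies in $\CCC_{n,k}$. Granting this, each $b_{w,\ii} = gs_{w,\ii} + \sum_{m < gs_{w,\ii}} c_m m$ has $\initial_<(b_{w,\ii}) = gs_{w,\ii} \in \CCC_{n,k}$ (the leading term survives since all other terms are strictly $\neglex$-smaller), hence the reduction $[b_{w,\ii}]$ equals $gs_{w,\ii}$ plus a combination of standard monomials strictly below it. Thus, under the linear map $S_{n,k} \to \FF^{\CCC_{n,k}}$ recording coordinates in the standard monomial basis, the images of the $b_{w,\ii}$ form a matrix which, when rows and columns are ordered compatibly with $\neglex$ restricted to $\GS_{n,k}$, is unitriangular with $1$'s on the diagonal: the $(w,\ii)$-row has a $1$ in the $gs_{w,\ii}$-column and possibly nonzero entries only in columns indexed by standard monomials that are $\neglex$-smaller than $gs_{w,\ii}$.

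A mild technical point to dispatch: the columns of this matrix range over all of $\CCC_{n,k}$, while the rows are indexed only by the subset $\{gs_{w,\ii}\} \subseteq \CCC_{n,k}$; but since $|\GS_{n,k}| = |\CCC_{n,k}|$, this subset is in fact all of $\CCC_{n,k}$, so the matrix is square, and being unitriangular it is invertible. Therefore $\BBB_{n,k}$ is linearly independent, hence a basis of $S_{n,k}$. I expect the main obstacle to be purely bookkeeping: verifying carefully that $\{gs_{w,\ii} : (w,\ii) \text{ admissible}\}$ coincides with $\CCC_{n,k}$ as sets (this is the variable-reversal of \cite[Thm.~5.3]{HRS} combined with the cardinality count) and that the partial order used to triangularize is genuinely a linear extension of divisibility-compatible $\neglex$ on these monomials, so that ``strictly smaller in $\neglex$'' is compatible with reduction. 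No genuinely new idea beyond the $\neglex$/Gr\"obner apparatus already built in Section~\ref{Hilbert} is needed.
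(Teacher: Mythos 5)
Your core technical claim --- that each $gs_{w,\ii}$ lies in the standard monomial set $\CCC_{n,k}$, so that $\{gs_{w,\ii}\}$ and $\CCC_{n,k}$ coincide --- is false, and this breaks the unitriangularity argument. Consider $n = k = 2$. Then $\GS_{2,2} = \{1, x_2\}$ (for $w = 21$ we have $\Des(w) = \{1\}$ and $gs_{21} = x_{w(1)} = x_2$), whereas the unique $(2,2)$-staircase is $(1,0)$, so by Corollary~\ref{staircase-standard} we get $\CCC_{2,2} = \{1, x_1\}$. These two monomial bases of $S_{2,2} = R_2$ are genuinely different sets. More generally $\GS_{n,k}$ and $\CCC_{n,k}$ are both bases of $S_{n,k}$ of the same cardinality, but they are \emph{not} equal as sets of monomials, and $gs_{w,\ii}$ is typically \emph{not} a standard monomial. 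This is not a ``mild technical point''; it is precisely what your argument needs and does not have.

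Because $gs_{w,\ii}$ can land in $\initial_<(J_{n,k})$, your reduction map need not send $b_{w,\ii}$ to ``$gs_{w,\ii}$ plus strictly smaller standard monomials.'' When $\initial_<(b_{w,\ii}) = gs_{w,\ii} \notin \CCC_{n,k}$, the reduction replaces the leading term by lower-order data, and the resulting matrix need not be unitriangular --- the row indexed by $(w,\ii)$ has no natural ``diagonal'' column, and there is no common order on rows (GS monomials) and columns (standard monomials) that makes the matrix triangular with unit diagonal. The same obstruction blocks the linear-independence-via-leading-terms version: a $\neglex$-leading GS monomial of a linear combination of the $b_{w,\ii}$ may well lie in $\initial_<(J_{n,k})$, so one cannot conclude the combination is nonzero modulo $J_{n,k}$.

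The paper's proof goes the other way --- spanning rather than linear independence --- and does not pass through the Gr\"obner/standard-monomial apparatus at all. It picks a hypothetical monomial $m$ (with all exponents $< k$, arranged via the relations $h_k(x_1,\ldots,x_i) \in J_{n,k}$) that is outside the span of $\BBB_{n,k}$ and is lexicographically minimal with that property (in the \emph{ordinary} lex order $x_1 > \cdots > x_n$, not $\neglex$), then applies the straightening lemma of Adin--Brenti--Roichman to write $m = gs_{\sigma(m)}\,e_{\mu(m)}(\xx_n) - \Sigma$ with $\Sigma$ lex-smaller. Either $e_{\mu(m)} \in J_{n,k}$, or $m$ is itself a GS monomial $gs_{w,\ii}$; in both cases minimality and the assumed form of $b_{w,\ii}$ force $m$ into the span, a contradiction. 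If you want to pursue your route, you would need a new reason why the set of $\neglex$-leading monomials of the $b_{w,\ii}$ avoids $\initial_<(J_{n,k})$; as the $n=k=2$ example shows, this is not true of $\GS_{n,k}$ itself.
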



\begin{proof}
By \cite[Thm. 5.3]{HRS}, we know that $|\BBB_{n,k}| = |\GS_{n,k}| = |\OP_{n,k}|$.  By Theorem~\ref{hilbert-series}, we have
$\dim(S_{n,k}) = |\OP_{n,k}|$.  Therefore, it is enough to show that $\BBB_{n,k}$ descends to a spanning set of
$S_{n,k}$.

If $\BBB_{n,k}$ did not descend to a spanning set of $S_{n,k}$, then there would be a monomial 
$m \in \FF[\xx_n]$ whose image $m + J_{n,k}$ did not lie in the span of $\BBB_{n,k}$.
Working towards a contradiction, suppose that such a monomial 
existed.

Let $m = x_1^{a_1} \cdots x_n^{a_n}$ be any monomial in $\FF[\xx_n]$.  We argue that $m$ is expressible 
modulo $J_{n,k}$ as a linear combination of monomials of the form $m' = x_1^{b_1} \cdots x_n^{b_n}$
with $b_i < k$ for all $i$.  Indeed, if $m$ does not already have this form, choose $i$ maximal such that
$a_i > k$.  Since $h_k(x_1, \dots, x_i) \in J_{n,k}$, modulo $J_{n,k}$ we have
\begin{equation}
m \equiv -(x_1^{a_1} \cdots x_i^{a_i - k} \cdots x_n^{a_n}) \sum_{\substack{1 \leq j_1 \leq \cdots \leq j_k \leq i \\ j_1 \neq i}}
x_{j_1} \cdots x_{j_k}.
\end{equation}
If every monomial appearing on the right hand side is of the required form, we are done.  
Otherwise, we may iterate this procedure.  Since $h_k(x_1) = x_1^k \in J_{n,k}$, iterating this procedure 
eventually yields $0$ or a linear combination of monomials of the required form. 

Let $m = x_1^{a_1} \cdots x_n^{a_n}$ be any monomial in $\FF[\xx_n]$ such that $m + J_{n,k}$ does not 
lie in the span of $\BBB_{n,k}$.
By the last paragraph, we may assume that $a_i < k$ for all 
$1 \leq i \leq n$.  
Choose such an $m$ which is minimal in the lexicographic term ordering 
(with respect to the usual variable order $x_1 > x_2 > \cdots > x_n$).
We associate a collection of objects to $m$ as follows.  

Let $\lambda(m) = \mathrm{sort}(a_1, \dots, a_n)$ be the 
sequence obtained by sorting the exponent sequence $a_1, \dots, a_n$ into weakly decreasing order.
Let $\sigma(m)=\sigma_1\cdots \sigma_n\in\symm_n$
 be the permutation (in one-line notation) obtained by listing the indices of variables in 
weakly decreasing order of the exponents in $m$, breaking ties by listing smaller indexed variables first.
Let $d(m) = (d_1, \dots, d_n)$ be the integer sequence given by 
$d_j = |\Des(\sigma(m)) \cap \{j, j+1, \dots, n\} |$.  Finally, it was shown 
by Adin, Brenti, and Roichman \cite{ABR} that the componentwise difference
$\lambda(m) - d(m)$ is an integer partition (i.e., has weakly decreasing components).
Let $\mu(m)$ be the {\em conjugate} of this integer partition.

Adin, Brenti, and Roichman \cite[Lem. 3.5]{ABR} proved that we can `straighten' the monomial $m$
and write
\begin{equation}
m = gs_{\sigma(m)} e_{\mu(m)}(\xx_n) - \Sigma,
\end{equation}
where $\Sigma$ is a linear combination of monomials which are $< m$ in lexicographical order
and do not involve any exponents which are $\geq k$.  
Here 
\begin{equation}
gs_{\sigma(m)} := gs_{\sigma(m),0^n} = x_{\sigma_1}^{d_1}\cdots x_{\sigma_n}^{d_n}.
\end{equation}
is the `classical' GS monomial indexed by $\sigma(m)$.
Our assumption on $m$ guarantees that $\Sigma$ lies in the span of $\BBB_{n,k}$ modulo $J_{n,k}$.

If $\mu(m)_1 > n-k$, then $e_{\mu(m)}(\xx_n) \equiv 0$ modulo $J_{n,k}$.  It follows that $m$ lies in the 
span of $\BBB_{n,k}$ modulo $J_{n,k}$, which is a contradiction.

If $\mu(m)_1 \leq n-k$, then by the definition of $\lambda(m)$, $d(m)$, and $\mu(m)$, we may write
\begin{equation}
m = gs_{\sigma(m)} \cdot x_{\sigma_1}^{\mu(m)'_1} \cdots x_{\sigma_{n-k}}^{\mu(m)'_{n-k}},
\end{equation}
where $\mu(m)'_1 \geq \cdots \geq \mu(m)'_{n-k} \geq 0$ is the conjugate of $\mu(m)$.  
Suppose $\mu(m)'_1 \geq k - \des(\sigma(m))$.  
Since the exponent of $x_{\sigma_1}$ in $gs_{\sigma(m)}$ equals $\des(\sigma(m))$,
we then have $x_{\sigma_1}^k \mid m$, which contradicts the assumption that $m$ has no variables with power
$\geq k$.  
Therefore, we have $\mu(m)'_1 < k - \des(\sigma(m))$.  This means that $m \in \GS_{n,k}$ and $m = gs_{w,\ii}$ for 
some pair $(w,\ii)$.
(In fact, we can take $(w,\ii)=(\sigma(m),\mu')$.)  However, our assumption on $\BBB_{n,k}$ guarantees that 
\begin{equation}
m = gs_{w,\ii} = b_{w,\ii} - \sum_{m' < m} c_{m'} \cdot m'
\end{equation}
for some scalars $c_{m'} \in \FF$.  
Then our assumption on $m$ implies that 
$m$ lies in the span of $\BBB_{n,k}$ modulo $J_{n,k}$, which is a contradiction.
\end{proof}

\begin{corollary}
\label{garsia-stanton-basis}
Let $k \leq n$ be positive integers.  The set $\GS_{n,k}$ of generalized Garsia-Stanton monomials
descends to a basis of $S_{n,k}$.
\end{corollary}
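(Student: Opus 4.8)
The plan is to observe that the set $\GS_{n,k}$ of generalized Garsia--Stanton monomials is itself a member of the family of bases covered by Lemma~\ref{garsia-stanton-type-bases}, so the corollary follows immediately once the hypotheses of that lemma are matched up. This is the approach I would take: identify $\GS_{n,k}$ with a suitable $\BBB_{n,k}$ and invoke the lemma.

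Concretely, I would set $\BBB_{n,k} := \GS_{n,k}$ and, for each admissible pair $(w,\ii)$, take $b_{w,\ii} := gs_{w,\ii}$. First I would check that the indexing set required by Lemma~\ref{garsia-stanton-type-bases} --- the set of pairs $(w,\ii)$ with $w \in \symm_n$ and $k - \des(w) > i_1 \geq \cdots \geq i_{n-k} \geq 0 = i_{n-k+1} = \cdots = i_n$ --- is literally the indexing set appearing in the definition of $\GS_{n,k}$; this is just unwinding definitions. Next I would note that each $b_{w,\ii} = gs_{w,\ii}$ is a single monomial, hence trivially has the shape $b_{w,\ii} = gs_{w,\ii} + \sum_{m < gs_{w,\ii}} c_m \cdot m$ demanded by the lemma, with every correction coefficient $c_m = 0$: there is nothing to straighten, since $gs_{w,\ii}$ is visibly its own $\neglex$-leading term. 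Applying Lemma~\ref{garsia-stanton-type-bases} then yields that $\GS_{n,k}$ descends to a basis of $S_{n,k}$.

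The main point is that there is essentially no obstacle at this stage. All of the substantive work --- the dimension count $\dim(S_{n,k}) = |\OP_{n,k}|$ from Theorem~\ref{hilbert-series}, the equality $|\GS_{n,k}| = |\OP_{n,k}|$ from \cite{HRS}, and the Adin--Brenti--Roichman straightening argument establishing the spanning property --- has already been absorbed into the proof of Lemma~\ref{garsia-stanton-type-bases}. The only thing requiring any care is the bookkeeping that the admissibility constraints on $(w,\ii)$ in the definition of $\GS_{n,k}$ and in the hypotheses of Lemma~\ref{garsia-stanton-type-bases} genuinely coincide, which is routine. So the proof amounts to a one-line deduction from the preceding lemma.
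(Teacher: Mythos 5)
Your proof is correct and is precisely the intended deduction: the paper gives no explicit proof for Corollary~\ref{garsia-stanton-basis} because it is exactly the specialization of Lemma~\ref{garsia-stanton-type-bases} to $b_{w,\ii} := gs_{w,\ii}$ (all correction coefficients $c_m = 0$), just as you describe.
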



For example, suppose $(n,k) = (7,5)$ and $w = 6213745$.  Then $\des(w) = 2$ and the classical
GS monomial is
$gs_w = (x_6)(x_6 x_2 x_1 x_3 x_7)$.
We have $n-k = 2$ and $k - \des(w) = 3$, so that this classical GS monomial gives rise to the following six elements of
$\GS_{n,k}$:
\begin{center}
$\begin{array}{ccc}
(x_6)(x_6 x_2 x_1 x_3 x_7) & (x_6)(x_6 x_2 x_1 x_3 x_7)(x_6) & (x_6)(x_6 x_2 x_1 x_3 x_7)(x_6^2) \\
(x_6)(x_6 x_2 x_1 x_3 x_7) (x_6 x_2) & (x_6)(x_6 x_2 x_1 x_3 x_7) (x_6^2 x_2) &
(x_6)(x_6 x_2 x_1 x_3 x_7) (x_6^2 x_2^2).
\end{array}$
\end{center}

\section{Module structure over 0-Hecke algebra}
\label{Module}

In this section we prove that an isomorphism $S_{n,k} \cong \FF[\OP_{n,k}]$ of (ungraded) $H_n(0)$-modules.

\subsection{Ordered set partitions}
\label{osp}
We first describe the $H_n(0)$-module structure of $\FF[\OP_{n,k}]$.
Recall that if $\alpha \models n$ is a composition, then $P_{\alpha}$ is the corresponding 
indecomposable projective $H_n(0)$-module.
We need a family of projective $H_n(0)$-modules which are indexed by pairs of compositions related by refinement.
Let $\alpha, \beta \models n$ be two compositions satisfying $\alpha \preceq \beta$.
Let $P_{\alpha,\beta}$ be the $H_n(0)$-module given by
\begin{equation}\label{P-alpha-beta}
P_{\alpha,\beta} := H_n(0) \pib_{w_0(\alpha)} \pi_{w_0(\beta^c)}.
\end{equation}
In particular, we have $P_{\alpha,\alpha} = P_{\alpha}$.
More generally, we have the following structural result on $P_{\alpha,\beta}$.

\begin{lemma}
\label{alpha-beta-structure}
Let $\alpha, \beta \models n$ and assume $\alpha \preceq \beta$.  
Then $P_{\alpha,\beta}$ has basis
\begin{equation}
\{ \pib_w \pi_{w_0(\beta^c)} \,:\, w \in \symm_n, \, \Des(\alpha) \subseteq \Des(w) \subseteq \Des(\beta) \}.
\end{equation}
\label{ab-basis}
Let $\{ \gamma\models n: \alpha\preceq \gamma \preceq \beta \} = \{\gamma^1,\ldots,\gamma^r\}$ with $\gamma^i \preceq \gamma^j \Rightarrow i\le j$.
Then $P_{\alpha,\beta}$ is the direct sum of submodules $P_i\cong  P_{\gamma^i}$ for all $i\in[r]$, 
and each $P_i$ has basis $\{z_w: w\in\symm_n,\ \Des(w)=\Des(\gamma^i)\}$, where
\[ z_w = \pib_w \pi_{w_0(\beta^c)} + 
\sum_{ \substack{ u\in\symm_n \\ \Des(u) = \Des(\gamma^j),\ j>i }} c_u \pib_u\pi_{w_0(\beta^c)}, \quad c_u\in\FF.\]
\end{lemma}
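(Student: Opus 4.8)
The plan is to establish the two assertions in sequence: first pin down the $\FF$-basis of $P_{\alpha,\beta}$, and then build a filtration of $P_{\alpha,\beta}$ by submodules whose successive quotients are the indecomposable projectives $P_{\gamma^i}$; since each $P_{\gamma^i}$ is projective the filtration splits, and the splitting produces the elements $z_w$ in the stated form. Throughout I will use the elementary rules $\pib_a\pib_w=\pib_{s_aw}$ if $\ell(s_aw)>\ell(w)$ and $=-\pib_w$ otherwise (and the analogue on the right), the orthogonality $\pi_a\pib_a=\pib_a\pi_a=0$, and their iterate $\pib_u\pib_v=\pm\pib_{u\ast v}$ for the Demazure product $u\ast v$. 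I will also use the standard fact that $w_0(\gamma)$ is the longest element of the parabolic subgroup $\symm_{\Des(\gamma)}$: it is an involution whose left and right descent sets both equal $\Des(\gamma)$, and whenever $\Des(w)\subseteq\Des(\beta)$ the product $w\,w_0(\beta^c)$ is reduced, so $w\ast w_0(\beta^c)=w\,w_0(\beta^c)$ with $\ell(w\,w_0(\beta^c))=\ell(w)+\ell(w_0(\beta^c))$.

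\textbf{The basis.} Since the $\pib_u$ span $H_n(0)$ and $\pib_u\pib_{w_0(\alpha)}=\pm\pib_{u\ast w_0(\alpha)}$, the module $P_{\alpha,\beta}$ is spanned by the elements $\pib_x\pi_{w_0(\beta^c)}$ with $x$ of the form $u\ast w_0(\alpha)$. Right–multiplying by $\pib_i$ for $i\in\Des(\alpha)$ and using $\pib_{w_0(\alpha)}\pib_i=-\pib_{w_0(\alpha)}$ gives $\pib_x\pib_i=-\pib_x$, i.e. $\Des(\alpha)\subseteq\Des(x)$. If moreover $\Des(x)\not\subseteq\Des(\beta)$, pick $i\in\Des(x)\setminus\Des(\beta)=\Des(x)\cap\Des(\beta^c)$, write $\pib_x=\pib_{xs_i}\pib_i$, and observe $\pib_i\pi_{w_0(\beta^c)}=0$ because $i$ is a left descent of $w_0(\beta^c)$; hence $\pib_x\pi_{w_0(\beta^c)}=0$. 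Thus $P_{\alpha,\beta}$ is spanned by $\{\pib_w\pi_{w_0(\beta^c)}:\Des(\alpha)\subseteq\Des(w)\subseteq\Des(\beta)\}$. For independence it suffices to treat the larger set $\{\pib_w\pi_{w_0(\beta^c)}:\Des(w)\subseteq\Des(\beta)\}$: expanding $\pi_{w_0(\beta^c)}$ in the $\pib$–basis, every term has length at most $\ell(w_0(\beta^c))$ and the unique term of that length is $\pib_{w_0(\beta^c)}$ with coefficient $1$, so $\pib_w\pi_{w_0(\beta^c)}=\pib_{w\,w_0(\beta^c)}+(\text{lower-length }\pib\text{-terms})$ for each such $w$; since $w\mapsto w\,w_0(\beta^c)$ is injective and length-additive on this set, comparing the maximal-length term in a putative vanishing combination forces all coefficients to be $0$.

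\textbf{The filtration and splitting.} Fix the linear extension $\gamma^1,\dots,\gamma^r$ of the refinement order on $\{\gamma:\alpha\preceq\gamma\preceq\beta\}$. Because $\alpha\preceq\gamma^j$ one can write $w_0(\gamma^j)=u\,w_0(\alpha)$ with $u$ a minimal coset representative and the product reduced, so $\pib_{w_0(\gamma^j)}\in H_n(0)\pib_{w_0(\alpha)}$ and hence $P_{\gamma^j,\beta}\subseteq P_{\alpha,\beta}$ is a submodule, with basis $\{\pib_w\pi_{w_0(\beta^c)}:\Des(\gamma^j)\subseteq\Des(w)\subseteq\Des(\beta)\}$ by the basis result just proved. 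Set $N_i:=\sum_{j>i}P_{\gamma^j,\beta}$, a submodule of $P_{\alpha,\beta}$; comparing index sets gives $N_i=\mathrm{span}\{\pib_w\pi_{w_0(\beta^c)}:\Des(w)=\Des(\gamma^m),\ m>i\}$ and $N_{i-1}=N_i\oplus\mathrm{span}\{\pib_w\pi_{w_0(\beta^c)}:\Des(w)=\Des(\gamma^i)\}$, so $N_{i-1}/N_i$ has basis the classes of $\pib_w\pi_{w_0(\beta^c)}$ with $\Des(w)=\Des(\gamma^i)$. I claim the $\FF$-linear bijection $[\pib_w\pi_{w_0(\beta^c)}]\mapsto\pib_w\pi_{w_0((\gamma^i)^c)}$ onto $P_{\gamma^i}$ is an $H_n(0)$-isomorphism. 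Indeed, on either side $\pib_a$ sends $\pib_w\pi_{(\cdot)}$ to $-\pib_w\pi_{(\cdot)}$ when $\ell(s_aw)<\ell(w)$ and to $\pib_{s_aw}\pi_{(\cdot)}$ when $\ell(s_aw)>\ell(w)$; in the latter case $\Des(s_aw)\supseteq\Des(w)=\Des(\gamma^i)$ because left multiplication by a length-increasing $s_a$ only enlarges the descent set, and if the containment is strict then $\pib_{s_aw}\pi_{w_0((\gamma^i)^c)}=0$ by the basis result applied to $P_{\gamma^i}$, while $\pib_{s_aw}\pi_{w_0(\beta^c)}$ lies in $N_i$ (or is $0$) and so vanishes in $N_{i-1}/N_i$ — the two actions therefore agree, and $N_{i-1}/N_i\cong P_{\gamma^i}$. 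Now descend: $N_r=0$, and each sequence $0\to N_i\to N_{i-1}\to P_{\gamma^i}\to0$ splits since $P_{\gamma^i}$ is projective, giving a submodule $P_i\subseteq N_{i-1}$ with $N_{i-1}=P_i\oplus N_i$ and $P_i\cong P_{\gamma^i}$; iterating, $P_{\alpha,\beta}=N_0=P_1\oplus\cdots\oplus P_r$. The splitting identifies the basis $\{\pib_w\pi_{w_0((\gamma^i)^c)}:\Des(w)=\Des(\gamma^i)\}$ of $P_{\gamma^i}$ with a basis $\{z_w\}$ of $P_i$, and each $z_w$ is a lift of $\pib_w\pi_{w_0(\beta^c)}$, i.e. $z_w=\pib_w\pi_{w_0(\beta^c)}$ plus an element of $N_i=\mathrm{span}\{\pib_u\pi_{w_0(\beta^c)}:\Des(u)=\Des(\gamma^j),\ j>i\}$, which is exactly the asserted shape.

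\textbf{Main obstacle.} The substance of the argument is the two pieces of descent-set bookkeeping — that $\pib_u\pib_{w_0(\alpha)}$ always produces an $x$ with $\Des(x)\supseteq\Des(\alpha)$, and that a length-increasing left multiplication by $s_a$ can only enlarge the descent set — together with making the leading-term argument for linear independence precise (ensuring that the top $\pib$-term of $\pib_w\pi_{w_0(\beta^c)}$ is unambiguously $\pib_{w\,w_0(\beta^c)}$ with coefficient $1$, using reducedness of $w\,w_0(\beta^c)$). Once these are in place, the direct-sum decomposition and the form of the generators $z_w$ follow formally from the projectivity of the $P_\gamma$ recalled in the preliminaries.
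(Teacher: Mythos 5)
Your proof is correct and follows essentially the same strategy as the paper: filter $P_{\alpha,\beta}$ by the descent set of the indexing permutation (in the linear extension order of $\{\gamma : \alpha \preceq \gamma \preceq \beta\}$), identify the successive quotients with the indecomposable projectives $P_{\gamma^i}$, and split the filtration using projectivity. The difference is one of self-containment: the paper outsources the basis statement and the identification of the quotients to the proof of Theorem~3.2 of the cited Huang tableau paper, whereas you establish the basis directly via a leading-term argument in the $\pib$-basis (using reducedness of $w\,w_0(\beta^c)$ when $\Des(w)\subseteq\Des(\beta)$), build the filtration terms as sums of genuine submodules $P_{\gamma^j,\beta}$, and check the module isomorphism $N_{i-1}/N_i \cong P_{\gamma^i}$ by a case analysis on the $\pib_a$-action — including the correct observation that $\ell(s_aw)>\ell(w)$ forces $\Des(s_aw)\supseteq\Des(w)$, which is the nontrivial point. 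This makes the argument fully elementary and verifiable without the external reference, at the cost of some length.
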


\begin{proof}
We have a filtration $P_{\alpha,\beta} = M^{(1)} \supseteq\cdots\supseteq M^{(r)}\supseteq M^{(r+1)}=0$ of $H_n(0)$-modules,
where $M^{(i)}$ is the $\FF$-span of the disjoint union
\[ \bigsqcup_{j\ge i} \{ \pib_w\pi_{w_0(\beta^c)}: w\in\symm_n,\ \Des(w) = \Des(\gamma^j) \}\]
for $i=1,\ldots,r$.
The proof of \cite[Theorem 3.2]{HuangTab} implies $M^{(i)}/M^{(i+1)} \cong P_{\gamma^i}$ for all $i\in[r]$.
Since any short exact sequence of projective modules splits, we can write $P_{\alpha,\beta}$ as the direct sum of submodules $P_1,\ldots,P_r$, where each $P_i$ is isomorphic to $M^{(i)}/M^{(i+1)}$ and has a basis
\[ \{ \pib_w \pib_{w_0(\beta)} + \text{terms in $M^{(i+1)}$} : w\in\symm_n,\ \Des(w)=\Des(\gamma^i) \}.\]
The result follows.
\end{proof}

Also recall that, for each composition $\alpha=(\alpha_1,\ldots,\alpha_\ell) \models n$, we denote by $\OP_{\alpha}$ the collection of ordered set partitions of shape $\alpha$, i.e., pairs $(w,\alpha)$ for all $w\in\symm_n$ with $\Des(w)\subseteq \Des(\alpha)$.

\begin{lemma}
\label{osp-substructure}
Let $\alpha = (\alpha_1,\ldots,\alpha_\ell)$ be a composition of $n$. Then $\FF[\OP_\alpha]$ is a cyclic $H_n(0)$-module generated by the ordered set partition $(12\cdots n,\alpha)$ and is isomorphic to $P_{n,\alpha}$ via the map defined by sending $(w,\alpha)$ to $\pib_w \pi_{w_0(\alpha^c)}$ for all $w\in\symm_n$ with $\Des(w)\subseteq \Des(\alpha)$.
\end{lemma}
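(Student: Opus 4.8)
The plan is to exhibit an explicit $H_n(0)$-equivariant map and show it is an isomorphism by comparing dimensions and checking it interacts correctly with the generators $\pib_i$. First I would define the $\FF$-linear map $\phi \colon \FF[\OP_\alpha] \to P_{n,\alpha}$ on the basis of ordered set partitions by $\phi(w,\alpha) := \pib_w \pi_{w_0(\alpha^c)}$, where $w$ ranges over permutations in $\symm_n$ with $\Des(w) \subseteq \Des(\alpha)$. Since $\phi$ carries a basis of $\FF[\OP_\alpha]$ to the basis of $P_{n,\alpha}$ described in Lemma~\ref{alpha-beta-structure} (taking $(\alpha,\beta) = (1^n,\alpha)$, so that $P_{1^n,\alpha} = H_n(0)\pib_{w_0(1^n)}\pi_{w_0(\alpha^c)} = H_n(0)\pi_{w_0(\alpha^c)}$ and the indexing set becomes exactly $\{w : \Des(w) \subseteq \Des(\alpha)\}$, using $\pib_{w_0(1^n)} = \pib_e = 1$), the map $\phi$ is automatically a vector space isomorphism. (I should double check the convention: the statement writes $P_{n,\alpha}$, matching the notation $P_{\alpha,\beta}$ with $\alpha$ replaced by the one-part-all-singletons composition; one must verify $w_0$ of that composition is the identity so $\pib_{w_0(\cdot)} = 1$.)

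Next I would verify $H_n(0)$-equivariance, which amounts to checking $\phi(\pib_i . (w,\alpha)) = \pib_i \phi(w,\alpha) = \pib_i \pib_w \pi_{w_0(\alpha^c)}$ for each generator $\pib_i$ and each admissible $(w,\alpha)$. The left-hand side is governed by the three-case rule for the $\pib_i$-action on $\FF[\OP_{n,k}]$ recalled in the Background section: $\pib_i.(w,\alpha)$ equals $-(w,\alpha)$, $0$, or $(s_iw', \alpha)$ according to whether $i+1$ lies in a block to the left of, in the same block as, or in a block to the right of $i$ in the ordered set partition $(w,\alpha)$. Translating this to the pair notation: position the letters $i, i+1$ via $w$, and observe that ``$i+1$ to the left of $i$'' corresponds to $\ell(s_i w) < \ell(w)$ with the descent surviving in $\Des(\alpha)$; ``same block'' corresponds to the transposition crossing a bar of $\alpha$ (so $s_i w$ has a descent not in $\Des(\alpha)$, forcing the relevant product to vanish); and ``to the right'' corresponds to $\ell(s_i w) > \ell(w)$ with $\Des(s_i w) \subseteq \Des(\alpha)$ still. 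On the right-hand side, I would use the standard multiplication rules for the $\pib$-basis of $H_n(0)$: $\pib_i \pib_w = \pib_{s_i w}$ if $\ell(s_i w) > \ell(w)$, and $\pib_i \pib_w = -\pib_w$ if $\ell(s_i w) < \ell(w)$, together with the fact that $\pib_i \pib_w \pi_{w_0(\alpha^c)} = 0$ exactly when the reduced-word combinatorics forces a $\pib_j \pi_j$ (equivalently $\pi_j \pib_j$) collision, which happens precisely in the ``same block'' case. Matching these three cases against the three cases of the action rule completes equivariance.

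The bookkeeping in that case analysis is the main obstacle: one has to be careful about the variant generators ($\pib_i = \pi_i - 1$), the variable/reversal conventions, and especially the ``same block'' case, where showing $\pib_i\pib_w\pi_{w_0(\alpha^c)} = 0$ requires knowing that a reduced word for $w_0(\alpha^c)$ can be chosen ending in $\pi_j$ for the relevant $j \in \Des(\alpha^c)$ — this is the place where the descent set hypothesis $\Des(w) \subseteq \Des(\alpha)$ and its complement get used, and it is essentially the computation already carried out in the proof of \cite[Theorem 3.2]{HuangTab} for the module $P_{\alpha,\beta}$. In fact, a cleaner route is to invoke Lemma~\ref{alpha-beta-structure} directly: it already establishes that $P_{1^n,\alpha}$ has the stated basis $\{\pib_w\pi_{w_0(\alpha^c)} : \Des(w) \subseteq \Des(\alpha)\}$, so it remains only to identify the $H_n(0)$-action on this basis, and the generalized ribbon tableau action of \cite{HuangTab} (of which the $\OP$-action is the special case noted in the Introduction) is by construction the same formula. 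Thus I would phrase the proof as: cyclicity follows since $(12\cdots n, \alpha) \mapsto \pi_{w_0(\alpha^c)}$, the cyclic generator of $P_{1^n,\alpha}$; and the bijection on bases together with the identical action formulas yields the $H_n(0)$-isomorphism, with the routine per-generator check relegated to (or cited from) the proof of Lemma~\ref{alpha-beta-structure} and \cite[Thm. 3.2]{HuangTab}.
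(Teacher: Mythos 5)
Your proposal takes a genuinely different (more hands-on) route from the paper's own proof. The paper simply observes that $\FF[\OP_\alpha]$ is the $\FF$-span $P_{\alpha_1\oplus\cdots\oplus\alpha_\ell}$ of standard Young tableaux of a disjoint union of rows in the notation of \cite{HuangTab}, via the obvious bijection sending $(B_1|\cdots|B_\ell)$ to the tableau with rows $B_1,\ldots,B_\ell$, and then cites \cite[Thm.~3.3]{HuangTab} for the isomorphism $P_{\alpha_1\oplus\cdots\oplus\alpha_\ell}\cong P_{n,\alpha}$. Your approach instead writes down the explicit map $\phi\colon(w,\alpha)\mapsto\pib_w\pi_{w_0(\alpha^c)}$, uses Lemma~\ref{alpha-beta-structure} to identify its image as a basis, and then proposes a per-generator $\pib_i$-equivariance check by the three-case dichotomy. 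Both approaches ultimately rest on the same underlying computation from \cite{HuangTab} (Thm.~3.2 or 3.3), and both are correct in spirit; yours is more elementary and self-contained in that it never invokes the tableau picture, at the cost of carrying out (or at least sketching) the case-by-case equivariance check explicitly. The overall structure you outline is sound and matches Lemma~\ref{alpha-beta-monomial}'s proof, which performs precisely this three-case comparison in the closely parallel setting.

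One concrete error worth flagging: the notation $P_{n,\alpha}$ means $P_{(n),\alpha}$, where $(n)$ is the one-part composition of $n$, not the composition $1^n=(1,\ldots,1)$. You repeatedly write $(\alpha,\beta)=(1^n,\alpha)$ and call it ``the one-part-all-singletons composition,'' but these are two different compositions: $(n)$ has $\Des((n))=\emptyset$, so $w_0((n))$ is the identity and $\pib_{w_0((n))}=1$ as you want; whereas $\Des(1^n)=[n-1]$, so $w_0(1^n)$ is the longest element $n(n-1)\cdots1$, and $\pib_{w_0(1^n)}\neq1$. Moreover the constraint $1^n\preceq\alpha$ fails unless $\alpha=1^n$, so the pair $(1^n,\alpha)$ is not even admissible in Lemma~\ref{alpha-beta-structure} in general. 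The conclusion you reach (that $\pib_{w_0(\cdot)}=1$ so $P_{n,\alpha}=H_n(0)\pi_{w_0(\alpha^c)}$ with basis indexed by $\{w : \Des(w)\subseteq\Des(\alpha)\}$) is correct, but it should be justified via $(n)$, not $1^n$; your own hedge (``I should double check the convention'') is exactly the right instinct here.

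A second small imprecision: when you argue the ``same block'' case, the vanishing $\pib_j\pi_{w_0(\alpha^c)}=0$ holds because $j$ is a \emph{left} descent of $w_0(\alpha^c)$ (equivalently, $w_0(\alpha^c)$ admits a reduced word \emph{beginning} with $s_j$, which is where $\iDes(w_0(\alpha^c))=\Des(\alpha^c)$ enters); you phrase this as $w_0(\alpha^c)$ having a reduced word \emph{ending} in $s_j$, which is the right-descent condition and, while also true for these minimal coset representatives since they are involutions, is not the relation being used. Otherwise the case analysis lines up correctly with the $\pib_i$-action in Equation~\eqref{pi-defining-action} and the $\pib$-basis multiplication rules.
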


\begin{proof}

Huang~\cite[(3.3)]{HuangTab} defined an action of $H_n(0)$ on the $\FF$-span $P_{\alpha_1\oplus\cdots\oplus\alpha_\ell}$ of standard tableaux of skew shape $\alpha_1\oplus\cdots\oplus\alpha_\ell$, where $\alpha_1\oplus\cdots\alpha_\ell$ is a disconnected union of rows of lengths $\alpha_1,\ldots,\alpha_\ell$, ordered from southwest to northeast. 
There is an obvious isomorphism $\FF[\OP_\alpha] \cong P_{\alpha_1\oplus\cdots\oplus\alpha_\ell}$ by sending an ordered set partition $(B_1|\cdots|B_k)$ to the tableau whose rows are $B_1,\ldots,B_k$ from southwest to northeast. 
Combining this with the isomorphism $P_{\alpha_1\oplus\cdots\oplus\alpha_\ell} \cong P_{n,\alpha}$ provided by \cite[Theorem 3.3]{HuangTab} gives the desired result.
\end{proof}

\begin{proposition}
\label{osp-structure}
Let $k \leq n$ be positive integers. 
Then we have isomorphisms of $H_n(0)$-modules:
\begin{equation}
\FF[\OP_{n,k}] \cong \bigoplus_{\substack{ \alpha\models n \\ \ell(\alpha)=k }} \FF[\OP_\alpha] \cong \bigoplus_{\beta \models n} P_{\beta}^{\oplus{n - \ell(\beta) \choose k - \ell(\beta)}}.
\end{equation}
\end{proposition}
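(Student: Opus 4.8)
The plan is to chain together the two structural lemmas about the modules $P_{\alpha,\beta}$ and then evaluate a multiplicity by an elementary count. The first isomorphism is immediate: the set $\OP_{n,k}$ is the disjoint union $\bigsqcup_\alpha \OP_\alpha$ as $\alpha$ ranges over compositions of $n$ with $\ell(\alpha)=k$, and each subspace $\FF[\OP_\alpha]$ is $H_n(0)$-stable because the operators in \eqref{pi-defining-action} send $\sigma\in\OP_\alpha$ to an $\FF$-linear combination of ordered set partitions of the same shape $\alpha$ (neither $\pi_i$ nor $s_i$ alters block sizes). Hence $\FF[\OP_{n,k}] = \bigoplus_{\ell(\alpha)=k}\FF[\OP_\alpha]$ as $H_n(0)$-modules.

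For the second isomorphism, I would first invoke Lemma~\ref{osp-substructure} to identify $\FF[\OP_\alpha]\cong P_{n,\alpha}$, where $P_{n,\alpha}$ means $P_{(n),\alpha}$ with $(n)$ the one-part composition of $n$; note $(n)\preceq\alpha$ automatically since $\Des((n))=\emptyset$. Then I would apply the direct-sum part of Lemma~\ref{alpha-beta-structure} with its pair $(\alpha,\beta)$ specialized to $((n),\alpha)$: the interval $\{\gamma\models n : (n)\preceq\gamma\preceq\alpha\}$ is exactly the set of coarsenings $\gamma\preceq\alpha$, and the lemma gives $P_{(n),\alpha}\cong\bigoplus_{\gamma\preceq\alpha}P_\gamma$. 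Substituting into the first isomorphism yields
\[
\FF[\OP_{n,k}] \;\cong\; \bigoplus_{\substack{\alpha \models n \\ \ell(\alpha) = k}} \;\bigoplus_{\substack{\gamma \models n \\ \gamma \preceq \alpha}} P_\gamma \;\cong\; \bigoplus_{\beta \models n} P_\beta^{\oplus N_\beta},
\]
where $N_\beta := \#\{\alpha\models n : \ell(\alpha)=k,\ \beta\preceq\alpha\}$; the final regrouping is valid because the $P_\beta$ are pairwise nonisomorphic indecomposables, so by Krull--Schmidt the multiplicities are well defined.

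It remains to compute $N_\beta$. Writing $\beta=(\beta_1,\dots,\beta_\ell)$ with $\ell=\ell(\beta)$, a composition $\alpha$ with $\beta\preceq\alpha$ and $\ell(\alpha)=k$ is obtained by replacing each part $\beta_i$ by a composition of $\beta_i$ into $m_i\geq 1$ parts, subject to $m_1+\cdots+m_\ell=k$. Since there are $\binom{\beta_i-1}{m_i-1}$ compositions of $\beta_i$ into $m_i$ parts, $N_\beta=\sum_{m_1+\cdots+m_\ell=k}\prod_{i=1}^\ell\binom{\beta_i-1}{m_i-1}$, which is the coefficient of $x^k$ in $\prod_{i=1}^\ell x(1+x)^{\beta_i-1}=x^{\ell}(1+x)^{n-\ell}$, namely $\binom{n-\ell}{k-\ell}$. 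This gives the stated multiplicity. I do not anticipate a genuine obstacle here --- both isomorphisms reduce directly to the cited lemmas --- so the only real content is this one-line generating-function identity together with the bookkeeping needed to apply Lemma~\ref{alpha-beta-structure} with the correct specialization of its indices.
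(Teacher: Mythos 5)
Your proof is correct and follows essentially the same route as the paper: decompose by shape, identify each $\FF[\OP_\alpha]$ with $P_{(n),\alpha}$ via Lemma~\ref{osp-substructure}, split that into indecomposables via Lemma~\ref{alpha-beta-structure}, and count the multiplicity of $P_\beta$. The only difference is cosmetic: you compute $N_\beta = \binom{n-\ell(\beta)}{k-\ell(\beta)}$ by a generating-function argument, whereas the same count follows immediately by noting that $\alpha\succeq\beta$ with $\ell(\alpha)=k$ corresponds to choosing a $(k-1)$-subset of $[n-1]$ containing $\Des(\beta)$, i.e.\ choosing $k-\ell(\beta)$ elements from the $n-\ell(\beta)$ elements of $[n-1]\setminus\Des(\beta)$.
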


\begin{proof}
Since $\OP
_{n,k}$ is the disjoint union of $\OP_{n,\alpha}$ for all compositions $\alpha\models n$ of length $\ell(\alpha)=k$, the first desired isomorphism follows.
Applying Lemma~\ref{alpha-beta-structure} and Lemma~\ref{osp-substructure} to each $\OP_{n,\alpha}$ gives a direct sum decomposition of $\FF[\OP_{n,k}]$ into projective indecomposable modules.
The multiplicity of $P_\beta$ in this direct sum equals 
\[ |\{\beta\preceq\alpha: \ell(\alpha)=k\}| = { n-\ell(\beta) \choose k-\ell(\beta) }\]
for each $\beta\models n$. 
The second desired isomorphism follows.
\end{proof}

For example, we have $\FF[\OP_{13}] \cong P_{13} \oplus P_4$, $\FF[\OP_{22}]\cong P_{22} \oplus P_4$, 
$\FF[\OP_{31}] \cong P_{31}\oplus P_4$, and 
\begin{equation}\label{OP42}
 \FF[\OP_{4,2}] \cong P_{13}\oplus P_{22} \oplus P_{31} \oplus P_4^{\oplus 3}.
\end{equation}

\begin{figure}[h]
\[ \xymatrix{
1|234 \ar@(ur,rd)[]^{\pib_2 = \pib_3 = 0 } \ar[d]_{\pib_1} \\
2|134 \ar@(ur,rd)[]^{\substack{\pib_1 = -1 \\ \pib_3 = 0 }} \ar[d]_{\pib_2} \\
3|124 \ar@(ur,rd)[]^{\substack{\pib_1 = 0 \\ \pib_2 = -1 }} \ar[d]_{\pib_3} \\
4|123 \ar@(ur,rd)[]^{\substack{\pib_1 = \pib_2 = 0 \\ \pib_3 = -1 }} }
\quad \xymatrix{
& 12|34 \ar@(ur,rd)[]^{\pib_1 = \pib_3 = 0 } \ar[d]_{\pib_2} \\
& 13|24 \ar@(ur,r)[]^{\pib_2 = -1} \ar[ld]^{\pib_1} \ar[rd]_{\pib_3} \\
23|14 \ar@(ul,dl)[]_{\substack{\pib_1 = -1 \\ \pib_2 = 0 }} \ar[rd]^{\pib_3} &&
14|23 \ar@(ur,rd)[]^{\substack{\pib_2 = 0 \\ \pib_3 = -1 }} \ar[ld]_{\pib_1} \\
& 24|13 \ar@(r,rd)[]^{\pib_1 = \pib_3 = -1} \ar[d]_{\pib_2} \\
& 34|12 \ar@(ur,rd)[]^{ \substack{ \pib_1 = \pib_3 = 0 \\ \pib_2 = -1 } } } 
\quad \xymatrix{
123|4 \ar@(ur,rd)[]^{\pib_1 = \pib_2 = 0} \ar[d]_{\pib_3} \\
124|3 \ar@(ur,rd)[]^{\substack{\pib_1 = 0 \\ \pib_3 = -1 }} \ar[d]_{\pib_2} \\
134|2 \ar@(ur,rd)[]^{\substack{\pib_2 = -1 \\ \pib_3 = 0 }} \ar[d]_{\pib_1} \\
234|1 \ar@(ur,rd)[]^{\substack{\pib_1 = -1 \\ \pib_2 = \pib_3 = 0 }} 
} \]
\[ \OP _{13} \cong P_4 \oplus P_{13} \hspace{1.1in} \OP _{22} \cong P_4 \oplus P_{22} \hspace{1in} \OP _{31} \cong P_4 \oplus P_{31} \]
\caption{A decomposition of $\FF[\OP_{4,2}]$} \label{OP42}
\end{figure}

\subsection{0-Hecke action on polynomials}
Our next task is to show that $S_{n,k}$ has the same isomorphism type as the $H_n(0)$-module of 
Proposition~\ref{osp-structure}.
To do this, we will need to study the action of $H_n(0)$ on the polynomial ring $\FF[\xx_n]$ via the isobaric Demazure operators $\pi_i$ defined in \eqref{Demazure}.
Using the relation $\pib_i=\pi_i-1$, we have
\[ \pib_i(f) := \frac{x_{i+1} f - x_{i+1} (s_i(f))}{x_i - x_{i+1}},\quad \forall i\in[n-1],\ \forall f\in\FF[\xx_n]. \]
Thus for an arbitrary monomial $x_1^{a_1} \cdots x_n^{a_n}$, we have 
\begin{equation}\label{pi-bar}
\pib_i(x_1^{a_1} \cdots x_n^{a_n}) = \begin{cases}
\displaystyle (x_1^{a_1} \cdots x_{i-1}^{a_{i-1}} x_{i+2}^{a_{i+2}} \cdots x_n^{a_n}) \sum_{j = 1}^{a_i - a_{i+1}} x_i^{a_i - j} x_{i+1}^{a_{i+1}+j} & a_i \ge a_{i+1} \\
\displaystyle -(x_1^{a_1} \cdots x_{i-1}^{a_{i-1}} x_{i+2}^{a_{i+2}} \cdots x_n^{a_n})   \sum_{j = 0}^{a_{i+1} - a_i-1} x_i^{a_i + j} x_{i+1}^{a_i - j} & a_i<a_{i+1}.
\end{cases} 
\end{equation}
Using this we have the following triangularity result.

\begin{lemma}
\label{demazure-leading-term}
Let $\dd = (d_1 \geq \dots \geq d_n)$ be a weakly decreasing vector of nonnegative integers and let 
$\xx^{\dd} = x_1^{d_1} \cdots x_n^{d_n}$ be the corresponding monomial in $\FF[\xx_n]$.
Suppose $w \in \symm_n$ satisfies $\Des(w) \subseteq \Des(\dd)$.  
Then the leading term of the polynomial
$\pib_w (\xx^{\dd})$ under {\tt neglex} is the monomial $w(\xx^{\dd})$.
\end{lemma}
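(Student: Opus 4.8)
The plan is to prove this by induction along a carefully chosen reduced word for $w$. The key preliminary observation is that the set $\{v\in\symm_n:\Des(v)\subseteq\Des(\dd)\}$ is a lower order ideal in the \emph{left} weak order: if $v$ lies below $v'$ in left weak order then $v^{-1}$ lies below $(v')^{-1}$ in right weak order, and left descent sets only grow along the right weak order (if $(v')^{-1}=v^{-1}z$ with lengths adding, then $i\in\Des_L(v^{-1})$ forces $\ell(s_i(v')^{-1})<\ell((v')^{-1})$), so $\Des(v)=\Des_L(v^{-1})\subseteq\Des_L((v')^{-1})=\Des(v')$. Consequently we may pick a reduced word $w=s_{i_1}s_{i_2}\cdots s_{i_\ell}$ all of whose suffixes $v_j:=s_{i_j}s_{i_{j+1}}\cdots s_{i_\ell}$ again satisfy $\Des(v_j)\subseteq\Des(\dd)$. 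Put $g_j:=\pib_{i_j}\pib_{i_{j+1}}\cdots\pib_{i_\ell}(\xx^{\dd})=\pib_{v_j}(\xx^{\dd})$, so that $g_{\ell+1}=\xx^{\dd}$ and $g_1=\pib_w(\xx^{\dd})$; we show by downward induction on $j$ that the {\tt neglex}-leading term of $g_j$ is $v_j(\xx^{\dd})$, with coefficient $1$.

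For the inductive step write $\gamma:=v_{j+1}\dd$ (the exponent vector that permutes the entries of $\dd$), so $g_{j+1}=\xx^{\gamma}+(\text{{\tt neglex}-lower terms})$. Since $\ell(v_j)>\ell(v_{j+1})$ gives $v_{j+1}^{-1}(i_j)<v_{j+1}^{-1}(i_j+1)$, and since both $v_{j+1}$ and $v_j=s_{i_j}v_{j+1}$ have descent set inside $\Des(\dd)$, the two preimage positions cannot lie in a common constant block of $\dd$ — otherwise they would be consecutive and $v_j$ would acquire a descent interior to that block. Hence $\gamma_{i_j}>\gamma_{i_j+1}$, so by the explicit formula \eqref{pi-bar}, $\pib_{i_j}(\xx^{\gamma})$ is the sum over $1\le t\le\gamma_{i_j}-\gamma_{i_j+1}$ of the monomials gotten from $\xx^{\gamma}$ by transferring $t$ units of exponent from $x_{i_j}$ to $x_{i_j+1}$; as {\tt neglex} ranks $x_{i_j+1}$ above $x_{i_j}$, the largest of these is the $t=\gamma_{i_j}-\gamma_{i_j+1}$ term, namely $s_{i_j}(\xx^{\gamma})=v_j(\xx^{\dd})$, with coefficient $1$. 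It remains to check that no lower monomial $\xx^{\delta}$ of $g_{j+1}$ contributes anything $\ge v_j(\xx^{\dd})$ after applying $\pib_{i_j}$. By \eqref{pi-bar} again: if $\delta_{i_j}\le\delta_{i_j+1}$ then the leading term of $\pib_{i_j}(\xx^{\delta})$ is $\pm\xx^{\delta}$ or $0$, and $\xx^{\delta}<\xx^{\gamma}<s_{i_j}(\xx^{\gamma})$, which is harmless; if $\delta_{i_j}>\delta_{i_j+1}$ then the leading term is $s_{i_j}(\xx^{\delta})$, and one must show $s_{i_j}(\xx^{\delta})<v_j(\xx^{\dd})$.

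The hard part is exactly this last inequality, which is false for a general monomial below $\xx^{\gamma}$, so one must use more about $g_{j+1}$ than its leading term. I would carry, alongside the leading‑term claim, the auxiliary invariant that every monomial of $g_j$ has exponent vector obtained from $\dd$ by repeatedly moving one unit of exponent from some position to the position immediately to its right — equivalently, its sequence of partial sums $\delta_1,\ \delta_1+\delta_2,\ \dots$ is entrywise $\le$ that of $\dd$ (the entrywise‑largest such sequence, since $\dd$ is weakly decreasing). Granting this invariant, a comparison of partial sums — using $\delta_{i_j}>\delta_{i_j+1}$, $\gamma_{i_j}>\gamma_{i_j+1}$ and $\xx^{\delta}<\xx^{\gamma}$ — forces $s_{i_j}(\xx^{\delta})<s_{i_j}(\xx^{\gamma})$, completing the induction. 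The main obstacle is propagating this invariant through \eqref{pi-bar}: the ``descending'' case $\delta_i\ge\delta_{i+1}$ only shifts exponent rightward and preserves it cleanly, but the ``ascending'' case $\delta_i<\delta_{i+1}$ shifts exponent leftward, so one has to argue that monomials of $g_{j+1}$ that are ascending at position $i_j$ either do not occur or do not obstruct the bound, using once more that $\gamma$ is dominant at $i_j$ and that all suffixes of the chosen reduced word have descent set inside $\Des(\dd)$. This partial‑sum bookkeeping is the only substantive step; everything else is a routine unwinding of the {\tt neglex} order together with the formula \eqref{pi-bar}.
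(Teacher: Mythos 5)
Your overall strategy is the same as the paper's: induct on $\ell(w)$ along a reduced word, track the $\neglex$ leading term under each $\pib_j$, and use the hypothesis $\Des(w)\subseteq\Des(\dd)$ to force $\gamma_j>\gamma_{j+1}$ at the active position $j$, which is the key descent computation. One small point: your preliminary observation about the left weak order ideal is correct but unnecessary. For \emph{any} length-additive factorization $w=s_jv$, every inversion of $v$ is an inversion of $w$, so $\Des(v)\subseteq\Des(w)\subseteq\Des(\dd)$ automatically; there is no need to choose the reduced word so that all suffixes stay in the ideal.

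You have, however, put your finger on a genuine difficulty and you have not closed it. The inequality $\xx^{\delta}<\xx^{\gamma}$ in $\neglex$ alone does \emph{not} bound the leading term of $\pib_j(\xx^{\delta})$ by $s_j(\xx^{\gamma})$: with $\dd=(2,1,0)$, $\gamma=\dd$, $j=1$, and $\delta=(3,0,0)$ one has $\xx^{\delta}<\xx^{\gamma}$, yet $\pib_1(x_1^3)$ contains $x_2^3$, which exceeds $x_1x_2^2=s_1(\xx^{\gamma})$. So one must exploit structural information about which $\delta$ actually occur in $\pib_v(\xx^{\dd})$, as you correctly anticipate. Your proposed invariant --- that each monomial $\xx^{\delta}$ of $\pib_v(\xx^{\dd})$ satisfies $\sum_{q\le p}\delta_q\le\sum_{q\le p}d_q$ for all $p$ --- would rule out the example above, and I believe it is true, but the propagation argument you outline fails in exactly the ascending case $\delta_{i_j}<\delta_{i_j+1}$ of \eqref{pi-bar}, as you yourself acknowledge. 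Such monomials do occur (for instance $\dd=(3,2,1,0)$, $v=s_3s_2s_1$ produces a monomial with exponent $(2,1,2,1)$, which ascends at position $2$), and their contributions are only controlled after cancellation with contributions from other monomials; your sketch does not address this cancellation at all. Until the invariant is actually established --- or a different argument is given for why monomials below $v(\xx^{\dd})$ cannot produce anything $\ge w(\xx^{\dd})$ --- the proof is incomplete. (For what it is worth, the paper's own treatment of this last step is a single sentence which, read literally, is subject to the same objection, so the difficulty you have identified is genuinely the crux of the lemma.)
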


\begin{proof}
The proof is similar to~\cite[Lemma 4.1]{Huang}.
We induct on the length $\ell(w)$ of the permutation $w$.  If $\ell(w) = 0$, then $w$ is the identity permutation
and the lemma is trivial.  Otherwise, we may write $w = s_j v$, where $j\in[n-1]$ and $v \in \symm_n$ satisfies $\ell(w) = \ell(v) + 1$.
Wa have $j\in \Des(w^{-1})$, $j\notin \Des(v^{-1})$, and $\Des(v) \subseteq \Des(w)\subseteq \Des(\dd)$.
By induction we have
\begin{equation}
\pib_v(\xx^{\dd}) = v(\xx^{\dd}) + \sum_{m < \xx^{\dd}} c_m \cdot m
\end{equation}
for some scalars $c_m \in \FF$, where $<$ is {\tt neglex}.
Since $j\notin \Des(v^{-1})$, we have $v^{-1}(j) < v^{-1}(j+1)$ and thus $d_{v^{-1}(j)} \geq d_{v^{-1}(j+1)}$.
Since $wv^{-1}(j) = s_j(j) > s_j(j+1) = wv^{-1}(j+1)$, there exists an element of $[v^{-1}(j),v^{-1}(j+1)-1]$ which belongs to $\Des(w)\subseteq \Des(\dd)$.
This implies $d_{v^{-1}(j)} > d_{v^{-1}(j+1)}$.
Then by \eqref{pi-bar}, applying $\pib_j$ to $v(\xx^{\dd}) = x_{v(1)}^{d_1}\cdots x_{v(n)}^{d_n}$ yields a polynomial whose $\neglex$-leading term is $s_iv(\xx^\dd) = w(\xx^{\dd})$.  
On the other hand, \eqref{pi-bar} also implies that applying $\pib_i$ to any monomial which is $< v(\xx^{\dd})$ in $\neglex$ cannot yield monomials which are 
$\geq s_iv(\xx^\dd)=w(\xx^{\dd})$ in $\neglex$.
\end{proof}

We will decompose the quotient $S_{n,k}$ into a direct sum of projective modules of the form $P_{\alpha,\beta}$ defined in \eqref{P-alpha-beta}.
This decomposition will ultimately rest on the following lemma.

\begin{lemma}
\label{alpha-beta-monomial}
Let $\dd = (d_1 \geq \cdots \geq d_n)$ be a weakly decreasing sequence of nonnegative integers.
Suppose $\alpha,\beta \models n$ such that $\alpha\preceq\beta$ and $\Des(\dd) = \Des(\beta)$.
Then $H_n(0) \pib_{w_0(\alpha)} \xx^{\dd}$ has basis
\begin{equation}
\label{x-basis}
\left\{ \pib_w(\xx^{\dd}) \,:\, \Des(\alpha) \subseteq \Des(w) \subseteq \Des(\beta) \right\}.
\end{equation}
Furthermore, sending each element $\pib_w(\xx^\dd)$ in the basis \eqref{x-basis} to $\pib_w\pi_{w_0(\beta^c)}$ gives an isomorphism $H_n(0) \pib_{w_0(\alpha)} \xx^{\dd} \cong P_{\alpha,\beta}$ of $H_n(0)$-modules.
\end{lemma}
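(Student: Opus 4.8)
The plan is to identify the polynomial module $M:=H_n(0)\pib_{w_0(\alpha)}(\xx^{\dd})$ with the abstract module $P_{\alpha,\beta}$ of Lemma~\ref{alpha-beta-structure} by transporting the explicit $0$-Hecke action across the two candidate bases. Write $W:=\{w\in\symm_n:\Des(\alpha)\subseteq\Des(w)\subseteq\Des(\beta)\}$ and recall that $\Des(\beta)=\Des(\dd)$. First I would establish that $\{\pib_w(\xx^{\dd}):w\in W\}$ is linearly independent. By Lemma~\ref{demazure-leading-term}, the $\neglex$ leading term of $\pib_w(\xx^{\dd})$ is the monomial $w(\xx^{\dd})$ for every $w\in W$ (the hypothesis $\Des(w)\subseteq\Des(\dd)$ holds). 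These leading monomials are pairwise distinct as $w$ ranges over $W$: the stabilizer of $\xx^{\dd}$ under the permutation action is the parabolic subgroup generated by $\{s_j:j\notin\Des(\dd)\}$, and the condition $\Des(w)\subseteq\Des(\dd)$ says exactly that $w$ is the minimal-length representative of its coset, so distinct such $w$ give distinct cosets and hence distinct monomials. Thus the family is independent, and $\dim_\FF\mathrm{span}\{\pib_w(\xx^{\dd}):w\in W\}=|W|=\dim_\FF P_{\alpha,\beta}$ by Lemma~\ref{alpha-beta-structure}.

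Next I would compute $\pib_i\cdot\pib_w(\xx^{\dd})$ for $w\in W$. From $\pib_i^2=-\pib_i$ and the braid relations one has in $H_n(0)$ the identity $\pib_i\pib_w=-\pib_w$ when $\ell(s_iw)<\ell(w)$ and $\pib_i\pib_w=\pib_{s_iw}$ when $\ell(s_iw)>\ell(w)$. In the first case this gives $\pib_i\cdot\pib_w(\xx^{\dd})=-\pib_w(\xx^{\dd})$. In the second case it gives $\pib_{s_iw}(\xx^{\dd})$, and here I would invoke two combinatorial facts: (i) when $\ell(s_iw)>\ell(w)$, left multiplication by $s_i$ cannot destroy a descent of $w$, so $\Des(w)\subseteq\Des(s_iw)$ and in particular $\Des(\alpha)\subseteq\Des(s_iw)$; and (ii) $\pib_u(\xx^{\dd})=0$ whenever $\Des(u)\not\subseteq\Des(\dd)$, since any such $u$ has a reduced word ending in $s_j$ with $j\notin\Des(\dd)$, and $d_j=d_{j+1}$ makes $\xx^{\dd}$ symmetric in $x_j,x_{j+1}$, so $\pib_j(\xx^{\dd})=0$. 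Consequently $\pib_i\cdot\pib_w(\xx^{\dd})$ equals $\pib_{s_iw}(\xx^{\dd})$ with $s_iw\in W$ if $\Des(s_iw)\subseteq\Des(\beta)$, and equals $0$ otherwise. This is precisely the rule governing the action of $\pib_i$ on the basis $\{\pib_w\pi_{w_0(\beta^c)}:w\in W\}$ of $P_{\alpha,\beta}$ — the same identity $\pib_i\pib_w\in\{-\pib_w,\pib_{s_iw}\}$ together with the vanishing $\pib_u\pi_{w_0(\beta^c)}=0$ for $\Des(u)\not\subseteq\Des(\beta)$, as in Lemma~\ref{alpha-beta-structure} and \cite[Theorem~3.2]{HuangTab}.

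It follows that the $\FF$-linear bijection $\Phi$ from $P_{\alpha,\beta}$ onto $\mathrm{span}\{\pib_w(\xx^{\dd}):w\in W\}$ determined by $\pib_w\pi_{w_0(\beta^c)}\mapsto\pib_w(\xx^{\dd})$ intertwines the action of each $\pib_i$ on basis elements, hence is an isomorphism of $H_n(0)$-modules; in particular $\mathrm{span}\{\pib_w(\xx^{\dd}):w\in W\}$ is an $H_n(0)$-submodule of $\FF[\xx_n]$. Since $P_{\alpha,\beta}$ is generated by $\pib_{w_0(\alpha)}\pi_{w_0(\beta^c)}$ and $w_0(\alpha)\in W$, this submodule is generated by $\Phi(\pib_{w_0(\alpha)}\pi_{w_0(\beta^c)})=\pib_{w_0(\alpha)}(\xx^{\dd})$, so it coincides with $M$. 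This yields both assertions: $M$ has basis \eqref{x-basis}, and $\Phi^{-1}$ is the claimed isomorphism $M\cong P_{\alpha,\beta}$ sending $\pib_w(\xx^{\dd})$ to $\pib_w\pi_{w_0(\beta^c)}$.

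The step I expect to be the main obstacle is the matching of the actions: one must verify that the $\pib_i$-action on the standard basis of $P_{\alpha,\beta}$ obeys exactly the three-case rule above — the vanishing $\pib_u\pi_{w_0(\beta^c)}=0$ when $\Des(u)\not\subseteq\Des(\beta)$, and the fact that increasing length under left multiplication by $s_i$ only enlarges the descent set — so that it lines up verbatim with the computation carried out on the polynomial side via \eqref{pi-bar}. Once Lemma~\ref{demazure-leading-term} and Lemma~\ref{alpha-beta-structure} are available, everything else is bookkeeping.
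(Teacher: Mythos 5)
Your proposal is correct and matches the paper's argument in all essentials: both rely on Lemma~\ref{demazure-leading-term} for linear independence via distinct $\neglex$-leading monomials, the identity $\pib_i\pib_w\in\{-\pib_w,\pib_{s_iw}\}$, the vanishing $\pib_u(\xx^{\dd})=0$ for $\Des(u)\not\subseteq\Des(\dd)$, and the parallel vanishing $\pib_u\pi_{w_0(\beta^c)}=0$ for $\Des(u)\not\subseteq\Des(\beta)$, with the same three-case comparison of the $\pib_i$-action on the two bases. The only difference is cosmetic: you establish independence and the module-map compatibility first and then deduce that the span equals the cyclic module, whereas the paper first shows the cyclic module is spanned by the candidate set (using the observation $\pib_w(\xx^{\dd})=0$ when $\Des(w)\not\subseteq\Des(\beta)$ together with the minimality of $w_0(\alpha)$) and then checks independence and the isomorphism.
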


\begin{proof}
Let $1 \leq i \leq n-1$.  If $i \notin \Des(\beta)$, then the monomial $\xx^{\dd}$ is symmetric in $x_i$ and $x_{i+1}$,
so that $\pib_i(\xx^{\dd}) = 0$ by \eqref{pi-bar}.  
More generally, if $w \in \symm_n$ is such that $\Des(w) \not\subseteq \Des(\beta)$ then $\pib_w(\xx^{\dd}) = 0$ because there exists a reduced expression for $w$ ending in $s_i$ for some $i\in\Des(w)\setminus\Des(\beta)$.

By the last paragraph and the fact that $w_0(\alpha)$ is the left weak
Bruhat minimal permutation with descent set $\alpha$,
the module $H_n(0) \pib_{w_0(\alpha)} \xx^{\dd}$ is spanned by the set \eqref{x-basis}.
This set is linearly independent and hence a basis for $H_n(0) \pib_{w_0(\alpha)} \xx^{\dd}$, since by Lemma~\ref{demazure-leading-term} and the equality $\Des(\dd) = \Des(\beta)$, any two distinct elements $\pib_w(\xx^{\dd})$ and $\pib_{w'}(\xx^{\dd})$ of this set have $\neglex$ leading monomials $w(\xx^\dd)$ and $w'(\xx^\dd)$, which are distinct by $\Des(w)\subseteq \Des(\dd)$ and $\Des(w')\subseteq \Des(\dd)$.

By Lemma~\ref{alpha-beta-structure}, the module
$P_{\alpha,\beta}$ has basis given by \eqref{ab-basis}.
Thus the assignment $\pib_w(\xx^{\dd}) \mapsto \pib_w \pi_{w_0(\beta^c)}$ induces a linear isomorphism
from $H_n(0) \pib_{w_0(\alpha)} \xx^{\dd}$  to $P_{\alpha,\beta}$.  To check that this is an isomorphism of 
$H_n(0)$-modules, let $1 \leq i \leq n-1$.  We compare the action of $\pib_i$ on the bases 
(\ref{x-basis}) and (\ref{ab-basis}) as follows.  Let $w \in \symm_n$ satisfy 
$\Des(\alpha) \subseteq \Des(w) \subseteq \Des(\beta)$.

If $i \in \Des(w^{-1})$, then there is a reduced
expression for $w$ starting with $s_i$ and
 $\pib_i$ acts by the scalar $-1$ on both $\pib_w(\xx^{\dd})$
and $\pib_w \pi_{w_0(\beta^c)}$ since $\pib_i^2=-\pib_i$.

If $i \notin \Des(w^{-1})$ and $\Des(s_i w) \subseteq \Des(\beta)$, then 
the polynomial $\pib_i \pib_w (\xx^{\dd}) = \pib_{s_i w}(\xx^{\dd})$ lies in the basis (\ref{x-basis})
and the algebra element $\pib_i \pib_w \pi_{w_0(\beta^c)} = \pib_{s_i w} \pi_{w_0(\beta^c)}$
lies in the basis (\ref{ab-basis}).

If $i \notin \Des(w^{-1})$ and $\Des(s_i w) \not\subseteq \Des(\beta)$, we have 
$\pib_i \pib_w (\xx^{\dd}) = \pib_{s_i w} (\xx^{\dd}) = 0$ by the observation in the first paragraph.
On the other hand, we also have $\pib_i \pib_w \pi_{w_0(\beta^c)} = \pib_{s_iw}\pi_{w_0(\beta^c)} = 0$, since $s_iw$ has a reduced expression ending with $s_j$ for some $j\in\Des(\beta^c)$ and $\pib_j\pi_{w_0(\beta^c)}=0$ by the relation $\pib_j\pi_j=0$.
\end{proof}

\subsection{Decomposition of $S_{n,k}$}
We begin by introducing a family of $H_n(0)$-submodules of $S_{n,k}$.

\begin{defn}
Let $A_{n,k}$ be the set of all pairs $(\alpha, \ii)$, where $\alpha \models n$ 
is a composition whose first part satisfies $\alpha_1> n-k$
and $\ii = (i_1, \dots, i_n)$ is a sequence of nonnegative integers satisfying 
\begin{equation*}
k - \ell(\alpha) \geq i_1 \geq \cdots \geq i_{n-k} \geq 0 = i_{n-k+1} = \cdots = i_n.
\end{equation*}
Given a pair $(\alpha, \ii) \in A_{n,k}$, let $N_{\alpha,\ii}$ be the $H_n(0)$-module
generated by the image of the polynomial $\pib_{w_0(\alpha)}(\xx_{\alpha,\ii})$ in the quotient ring $S_{n,k}$.
\end{defn}

For example, let $(n,k) = (6,3)$.  Eliminating the $n-k = 3$ trailing zeros from the $\ii$ sequences, we have
\begin{equation*}
A_{6,3} = \left\{ 
\begin{array}{c}
(411, 000), (42, 111), (42, 110), (42, 100), (42, 000), (51, 111), (51, 110), (51, 100), (51,000), \\
(6, 222), (6,221), (6,220), (6,211), (6,210), (6,200), (6,111), (6,110), (6,100), (6,000)
\end{array}
 \right\}.
\end{equation*}

It will turn out that the $N_{\alpha,\ii}$ modules are special cases of the $P_{\alpha,\beta}$ modules.
Recall that, if $\alpha \models n$ and if $\ii$ is a length $n$ integer sequence, the composition
$\alpha \cup \ii \models n$ is characterized by $\Des(\alpha \cup \ii ) = \Des(\alpha) \cup \Des(\ii)$.
We will prove that if $(\alpha, \ii) \in A_{n,k}$, then 
$N_{\alpha,\ii} \cong P_{\alpha,\alpha \cup \ii}$.
To prove this fact, we will need a modification of the GS basis $\GS_{n,k}$ of $S_{n,k}$.
This modified basis will come from the following lemma, which 
states that the collection of GS monomials $\GS_{n,k}$
is related in a unitriangular way with the collection of polynomials
\begin{equation*}
\{ \pib_w(x_{\alpha,\ii}) \,:\, 
(\alpha,\ii) \in A_{n,k}, \, \Des(\alpha) \subseteq \Des(w) \subseteq \Des(\alpha \cup \ii) \}.
\end{equation*}

\begin{lemma}
\label{gs-n-lemma}  
Let $k \leq n$ be positive integers and endow monomials in $\FF[\xx_n]$
with $\neglex$ order.

\noindent(i)  Let $(\alpha, \ii) \in A_{n,k}$ and $w \in \symm_n$ such that 
$\Des(\alpha) \subseteq \Des(w) \subseteq \Des(\alpha \cup \ii)$.  
Then the leading term of 
$\pib_w(\xx_{\alpha,\ii})$ is $w(\xx_{\alpha,\ii}) = gs_{w,\ii'} \in \GS_{n,k}$, where
$\ii' = (i'_1, \dots, i'_n)$ is related to $\ii = (i_1, \dots, i_n)$ by
\begin{equation}\label{i2i'}
i'_j = i_j - | \{r \in \Des(w)\cap[n-k] \,:\, r \geq j \} |.
\end{equation}

\noindent(ii) Let $gs_{w,\ii'} \in \GS_{n,k}$ be a GS monomial.  
Then $gs_{w,\ii'}$ is the leading term of $\pib_w(\xx_{\alpha,\ii})$ 
for some $w\in\symm_n$ and some $(\alpha,\ii) \in A_{n,k}$ satisfying
$\Des(\alpha) \subseteq \Des(w) \subseteq \Des(\alpha \cup \ii)$ if and only if 
\begin{itemize}
\item  $\alpha \models n$ is characterized by $\Des(\alpha) = \Des(w) \setminus [n-k]$, and
\item the sequence $\ii = (i_1, \dots, i_n)$ is related to the sequence $\ii' = (i'_1, \dots, i'_n)$ by Equation~\eqref{i2i'}.
\end{itemize}
\end{lemma}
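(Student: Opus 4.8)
The plan is to reduce both parts to the leading-term formula of Lemma~\ref{demazure-leading-term} together with careful descent-set bookkeeping. The basic observation is that whenever $(\alpha,\ii)\in A_{n,k}$ the monomial $\xx_{\alpha,\ii}$ equals $\xx^{\dd}$ for the \emph{weakly decreasing} vector $\dd=(d_1\ge\cdots\ge d_n)$ with $d_m=|\{j\in\Des(\alpha):j\ge m\}|+i_m$; here the first summand is visibly weakly decreasing in $m$ and the second is weakly decreasing by the defining inequalities of $A_{n,k}$. Computing successive differences, $d_m-d_{m+1}$ is the indicator of $m\in\Des(\alpha)$ plus $i_m-i_{m+1}$, so $\Des(\dd)=\Des(\alpha)\cup\Des(\ii)=\Des(\alpha\cup\ii)$. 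Moreover the membership $(\alpha,\ii)\in A_{n,k}$ forces $\Des(\alpha)\cap[n-k]=\emptyset$ (since $\alpha_1>n-k$) and $\Des(\ii)\subseteq[n-k]$ (since $i_{n-k+1}=\cdots=i_n=0$), so $\Des(\alpha)$ and $\Des(\ii)$ lie in complementary halves of $[n-1]$. Consequently any $w$ with $\Des(\alpha)\subseteq\Des(w)\subseteq\Des(\alpha\cup\ii)$ satisfies the disjoint splitting $\Des(w)=\big(\Des(w)\setminus[n-k]\big)\sqcup\big(\Des(w)\cap[n-k]\big)$ with $\Des(w)\setminus[n-k]=\Des(\alpha)$ and $\Des(w)\cap[n-k]\subseteq\Des(\ii)$.

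For part (i): since $\Des(w)\subseteq\Des(\dd)$, Lemma~\ref{demazure-leading-term} gives that the $\neglex$-leading term of $\pib_w(\xx_{\alpha,\ii})$ is $w(\xx_{\alpha,\ii})$. To see this monomial lies in $\GS_{n,k}$, let $\beta\models n$ be the composition with $\Des(\beta)=\Des(w)$, so $gs_{w,\ii'}=w(\xx_{\beta,\ii'})$, and since $w$ acts injectively on monomials, $w(\xx_{\alpha,\ii})=gs_{w,\ii'}$ iff $\xx_{\alpha,\ii}=\xx_{\beta,\ii'}$. Matching the exponent of each $x_m$ and using $\Des(\alpha)=\Des(w)\setminus[n-k]$ shows this holds precisely when $\ii'$ is given by Equation~\eqref{i2i'}. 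It remains to check $k-\des(w)>i'_1\ge\cdots\ge i'_{n-k}\ge 0=i'_{n-k+1}=\cdots=i'_n$: the vanishing beyond index $n-k$ is immediate from \eqref{i2i'}; each $i'_j-i'_{j+1}$ equals $(i_j-i_{j+1})$ minus the indicator of $j\in\Des(w)\cap[n-k]$, and when that indicator is $1$ we have $j\in\Des(\ii)$, hence $i_j-i_{j+1}\ge 1$, giving weak monotonicity; and $i'_1=i_1-|\Des(w)\cap[n-k]|$ together with $\ell(\alpha)=\des(w)-|\Des(w)\cap[n-k]|+1$ and the bound $i_1\le k-\ell(\alpha)$ from $A_{n,k}$ forces $i'_1\le k-\des(w)-1$.

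For part (ii): the ``only if'' direction is exactly the preliminary observation (forcing $\Des(\alpha)=\Des(w)\setminus[n-k]$) together with the exponent matching of part (i), which forces $\ii$ to be the unique solution of \eqref{i2i'}, namely $i_j=i'_j+|\{r\in\Des(w)\cap[n-k]:r\ge j\}|$. For the ``if'' direction, one takes this $\alpha$ and $\ii$ and runs the same computations in reverse: (a) $(\alpha,\ii)\in A_{n,k}$, since $\Des(\alpha)\subseteq\{n-k+1,\dots,n-1\}$ gives $\alpha_1>n-k$, the trailing zeros of $\ii$ come from those of $\ii'$, and the inequalities $k-\ell(\alpha)\ge i_1\ge\cdots\ge i_{n-k}\ge 0$ follow from $gs_{w,\ii'}\in\GS_{n,k}$ and the identity for $\ell(\alpha)$ above; (b) $\Des(\alpha)\subseteq\Des(w)\subseteq\Des(\alpha\cup\ii)$, where the only nontrivial containment $\Des(w)\cap[n-k]\subseteq\Des(\ii)$ is precisely the statement that $i_j-i_{j+1}\ge 1$ for each $j\in\Des(w)\cap[n-k]$; and (c) the leading term of $\pib_w(\xx_{\alpha,\ii})$ is $gs_{w,\ii'}$, which now follows from part (i) applied to this $(\alpha,\ii,w)$.

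Essentially all of the work is in this descent-set accounting; the one point that demands care is the boundary index $j=n-k$, where $i_{n-k}\ge 0=i_{n-k+1}$ may or may not be a descent. Since Lemma~\ref{demazure-leading-term} carries the only analytic content, no further obstacle arises.
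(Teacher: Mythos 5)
Your proposal is correct and follows essentially the same route as the paper's proof: apply Lemma~\ref{demazure-leading-term} to identify the leading term as $w(\xx_{\alpha,\ii})$, then carry out the same descent-set bookkeeping (using $\Des(\alpha)\cap[n-k]=\emptyset$ and $\Des(\ii)\subseteq[n-k]$) to match exponents and verify the $\GS_{n,k}$ membership conditions, and run the same bookkeeping in reverse for part (ii). The only cosmetic difference is that you make the identity $\xx_{\alpha,\ii}=\xx^{\dd}$ and the disjoint splitting of $\Des(w)$ more explicit up front, which is a mild streamlining of the paper's presentation rather than a different argument.
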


\begin{proof}
(1)  Since $\Des(w) \subseteq \Des(\alpha \cup \ii)$,
Lemma~\ref{demazure-leading-term} applies to show that the $\neglex$ leading term of 
$\pib_w(\xx_{\alpha,\ii})$ is $w(\xx_{\alpha,\ii})$.  We need to show that
$gs_{w,\ii'}$ is actually a GS basis element.

It is clear that $i'_j = i_j = 0$ for $j > n-k$.
We next claim that the sequence $\ii'$ is weakly decreasing.  To see this,  let $1 \leq j \leq n-k$ and
note that 
\begin{equation}
i'_j - i'_{j+1} = \begin{cases}
i_j - i_{j+1} - 1 & j \in \Des(w)\cap[n-k], \\
i_j - i_{j+1} & j \notin \Des(w)\cap[n-k].
\end{cases}
\end{equation}
Since $\ii$ is a weakly decreasing sequence and $i_j = i_{j+1}$ implies
$j \notin \Des(\alpha \cup \ii) \supseteq \Des(w)$, we conclude that $i'_j \geq i'_{j+1}$.
Finally, we have $\Des(w)\cap[n-k] = \Des(w)\setminus\Des(\alpha)$ since the definition of $A_{n,k}$ implies $D(\alpha)\cap[n-k]=\emptyset$.
Then
\begin{equation}
i_1' = i_1 - | \Des(w)\cap[n-k] | = i_1 - \des(w) + \ell(\alpha) -1 < k - \des(w),
\end{equation}
so that $gs_{w,\ii} \in \GS_{n,k}$ is a genuine GS basis element.

Next, we show $w(\xx_{\alpha,\ii}) = gs_{w,\ii'}$.
Let $1 \leq j \leq n$. Since $\Des(w)\cap[n-k] = \Des(w)\setminus\Des(\alpha)$, it follows from \eqref{i2i'} that
\begin{equation}
| \{r \in \Des(\alpha) \,:\, r \geq j \}| + i_j = | r \in \Des(w) \,:\, r \geq j \}| + i'_j.
\end{equation}
This means that the variable $x_{w(j)}$ has the same exponent in $w(\xx_{\alpha,\ii})$
as $gs_{w,\ii'}$.   We conclude that $w(\xx_{\alpha,\ii}) = gs_{w,\ii'}$.

(2)  Let $gs_{w,\ii'} \in \GS_{n,k}$.  Suppose $gs_{w,\ii'}$ is the $\neglex$ leading term of 
$\pib_w(\xx_{\alpha,\ii})$ for some $w\in\symm_n$ and some $(\alpha,\ii) \in A_{n,k}$ satisfying
$\Des(\alpha) \subseteq \Des(w) \subseteq \Des(\alpha \cup \ii)$. 

The definition of $A_{n,k}$ implies $D(\alpha)\cap[n-k]=\emptyset$ and $\Des(\ii)\subseteq[n-k]$.
Thus $\Des(\alpha)=\Des(w)\setminus[n-k]$ and $\Des(w)\setminus\Des(\alpha)=\Des(w)\cap[n-k]$.
Lemma~\ref{demazure-leading-term} guarantees that $gs_{w,\ii'} = w(x_{\alpha,\ii})$.   
Comparing the power of the variable $x_{w(j)}$ 
on both sides of this equality gives \eqref{i2i'} for all $1 \leq j \leq n$.

Conversely, given $gs_{w,\ii'} \in \GS_{n,k}$,
define $\alpha$ and $\ii$ as in the statement of the lemma.  We have $(\alpha,\ii) \in A_{n,k}$
and the $\neglex$ leading term of $\pib_w(\xx_{\alpha,\ii})$ is $gs_{w,\ii'}$ by 
similar arguments to those above.
\end{proof}

Lemma~\ref{gs-n-lemma} can be used to derive a new basis for the quotient  $S_{n,k}$.
This basis will be helpful in decomposing $S_{n,k}$ into a direct sum of
$H_n(0)$-modules of the form $N_{\alpha,\ii}$.

\begin{lemma}
\label{new-basis-lemma}
Let $k \leq n$ be positive integers.  The set of polynomials
\begin{equation}
\label{new-basis}
\{ \pib_w(\xx_{\alpha,\ii}) \,:\, (\alpha,\ii) \in A_{n,k}, \, w \in \symm_n, \, \Des(\alpha) \subseteq \Des(w) 
\subseteq \Des(\alpha \cup \ii) \}
\end{equation}
in $\FF[\xx_n]$ descends to a vector space basis of the quotient ring $S_{n,k}$.  Moreover, 
for any $(\alpha, \ii) \in A_{n,k}$ and any $w \in \symm_n$
with $\Des(\alpha) \subseteq \Des(w) \subseteq \Des(\alpha \cup \ii)$ we have
\begin{equation}
\label{degree-formula}
\deg( \pib_w(\xx_{\alpha,\ii}) ) = \deg( \xx_{\alpha,\ii}) = \maj(\alpha) + |\ii|.
\end{equation}
\end{lemma}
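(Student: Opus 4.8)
The plan is to deduce the basis statement directly from Lemma~\ref{garsia-stanton-type-bases}, after using Lemma~\ref{gs-n-lemma} to re-index the set~\eqref{new-basis} by Garsia--Stanton monomials; the degree formula is an independent and elementary consequence of the fact that each $\pib_i$ preserves degree.

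First I would use Lemma~\ref{gs-n-lemma} to match up the index set of \eqref{new-basis} --- the triples $(\alpha,\ii,w)$ with $(\alpha,\ii)\in A_{n,k}$ and $\Des(\alpha)\subseteq\Des(w)\subseteq\Des(\alpha\cup\ii)$ --- with the set $\GS_{n,k}$. Concretely, part (ii) of Lemma~\ref{gs-n-lemma} assigns to each $gs_{w,\ii'}\in\GS_{n,k}$ the unique pair $(\alpha,\ii)\in A_{n,k}$ with $\Des(\alpha)=\Des(w)\setminus[n-k]$ and $\ii$ obtained from $\ii'$ by inverting \eqref{i2i'}; part (i) then shows that $\pib_w(\xx_{\alpha,\ii})$ has $\neglex$-leading term exactly $w(\xx_{\alpha,\ii})=gs_{w,\ii'}$, and since this leading monomial occurs with coefficient $1$ (Lemma~\ref{demazure-leading-term}), we may write
\[ \pib_w(\xx_{\alpha,\ii}) = gs_{w,\ii'} + \sum_{m < gs_{w,\ii'}} c_m\, m \]
with scalars $c_m\in\FF$. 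Setting $b_{w,\ii'} := \pib_w(\xx_{\alpha,\ii})$ for each $gs_{w,\ii'}\in\GS_{n,k}$, the collection $\BBB_{n,k}=\{b_{w,\ii'}\}$ is, by this bijection, precisely the set \eqref{new-basis}, and it satisfies the hypotheses of Lemma~\ref{garsia-stanton-type-bases}. That lemma then yields that \eqref{new-basis} descends to a basis of $S_{n,k}$. The only point needing care is that the two index sets really are in bijection: no two triples $(\alpha,\ii,w)$ produce the same polynomial, because their $\neglex$-leading monomials $w(\xx_{\alpha,\ii})$ are distinct by part (i) of Lemma~\ref{gs-n-lemma}, and every $gs_{w,\ii'}\in\GS_{n,k}$ is reached by part (ii); but this is exactly the content of Lemma~\ref{gs-n-lemma}.

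For the degree formula, note first that $\xx_{\alpha,\ii}$ is the product of $\prod_{j\in\Des(\alpha)} x_1 x_2\cdots x_j$, which has degree $\sum_{j\in\Des(\alpha)} j = \maj(\alpha)$, and $x_1^{i_1}\cdots x_n^{i_n}$, which has degree $|\ii|$; hence $\deg(\xx_{\alpha,\ii}) = \maj(\alpha)+|\ii|$. Next, inspection of formula \eqref{pi-bar} shows that for any monomial $m$ every monomial appearing in $\pib_i(m)$ has the same total degree as $m$; thus each $\pib_i$, and therefore each $\pib_w$, carries homogeneous polynomials to homogeneous polynomials of the same degree. Since $\pib_w(\xx_{\alpha,\ii})$ is nonzero --- its $\neglex$-leading term is the nonzero monomial $gs_{w,\ii'}$ --- we conclude $\deg(\pib_w(\xx_{\alpha,\ii})) = \deg(\xx_{\alpha,\ii}) = \maj(\alpha)+|\ii|$.

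I do not expect a serious obstacle here: all the combinatorial bookkeeping has been front-loaded into Lemma~\ref{gs-n-lemma}, and Lemma~\ref{garsia-stanton-type-bases} already packages the triangularity-plus-dimension-count argument. The main thing to be careful about is purely organizational, namely matching the indexing triples of \eqref{new-basis} one-to-one with the monomials of $\GS_{n,k}$ before invoking Lemma~\ref{garsia-stanton-type-bases}, together with confirming that $\pib_w$ does not lower degree so that the ``leading term'' language of Lemma~\ref{gs-n-lemma} is consistent with the homogeneity assertion in \eqref{degree-formula}.
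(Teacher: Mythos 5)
Your proof is correct and takes essentially the same route as the paper: both invoke Lemma~\ref{gs-n-lemma} to match the set \eqref{new-basis} with the hypotheses of Lemma~\ref{garsia-stanton-type-bases} and then conclude, with the degree formula handled by the degree-preservation of each $\pib_i$. You have merely spelled out in more detail the bijection and the homogeneity argument that the paper leaves implicit.
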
 

\begin{proof}
By Lemma~\ref{gs-n-lemma}, the polynomials in the statement satisfy the conditions
of Lemma~\ref{garsia-stanton-type-bases}, and hence descend to a basis for $S_{n,k}$.
The degree formula is clear.
\end{proof}

In the coinvariant algebra
case $k = n$, the basis of Lemma~\ref{new-basis-lemma} appeared in \cite{Huang}.
As in \cite{Huang}, this modified GS-basis will facilitate analysis 
of the $H_n(0)$-structure of $S_{n,k}$.

\begin{theorem}
\label{s-decomposition-theorem}
Let $k \leq n$ be positive integers. 
For each $(\alpha,\ii) \in A_{n,k}$, the set of polynomials 
\begin{equation}
\label{new-basis-n}
\{ \pib_w(\xx_{\alpha,\ii}) \,:\, w \in \symm_n, \, \Des(\alpha) \subseteq \Des(w) 
\subseteq \Des(\alpha \cup \ii) \}
\end{equation}
descends to a basis for $N_{\alpha,\ii}$, and we have an isomorphism $N_{\alpha,\ii} \cong P_{\alpha,\alpha \cup \ii}$ of $H_n(0)$-modules by $\pib_w(\xx_{\alpha,\ii}) \mapsto \pib_w\pi_{w_0((\alpha\cup\ii)^c)}$.
Moreover, the $H_n(0)$-module $S_{n,k}$ satisfies
\begin{equation}
S_{n,k} = \bigoplus_{(\alpha,\ii) \in A_{n,k}} N_{\alpha,\ii} 
\cong \bigoplus_{\beta \models n} P_{\beta}^{\oplus{n - \ell(\beta) \choose k - \ell(\beta)}} \cong \FF[\OP_{n,k}].
\end{equation}
\end{theorem}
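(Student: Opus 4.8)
The plan is to realize each $N_{\alpha,\ii}$ as an isomorphic image of one of the modules $P_{\alpha,\beta}$ of \eqref{P-alpha-beta}, to glue these together using the basis of Lemma~\ref{new-basis-lemma} together with a dimension count, and finally to match the resulting direct sum with $\FF[\OP_{n,k}]$ via Lemma~\ref{alpha-beta-structure} and Proposition~\ref{osp-structure}.

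\emph{Step 1: each $N_{\alpha,\ii}$ is a quotient of $P_{\alpha,\alpha\cup\ii}$.} I would first observe that for $(\alpha,\ii)\in A_{n,k}$ the monomial $\xx_{\alpha,\ii}$ equals $\xx^{\dd}$ for the sequence $\dd=(d_1,\dots,d_n)$ with $d_j=|\{r\in\Des(\alpha):r\ge j\}|+i_j$, which is weakly decreasing, and that $d_j>d_{j+1}$ exactly when $j\in\Des(\alpha)\cup\Des(\ii)$, so $\Des(\dd)=\Des(\alpha\cup\ii)$. Since $\alpha\preceq\alpha\cup\ii$, Lemma~\ref{alpha-beta-monomial} (with $\beta=\alpha\cup\ii$) applies and shows that the cyclic $H_n(0)$-submodule $H_n(0)\pib_{w_0(\alpha)}\xx_{\alpha,\ii}$ of $\FF[\xx_n]$ has basis $\{\pib_w(\xx_{\alpha,\ii}):\Des(\alpha)\subseteq\Des(w)\subseteq\Des(\alpha\cup\ii)\}$ and is isomorphic to $P_{\alpha,\alpha\cup\ii}$ via $\pib_w(\xx_{\alpha,\ii})\mapsto\pib_w\pi_{w_0((\alpha\cup\ii)^c)}$. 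Because the quotient map $\FF[\xx_n]\twoheadrightarrow S_{n,k}$ is $H_n(0)$-equivariant, it carries this submodule onto $N_{\alpha,\ii}$, so $N_{\alpha,\ii}$ is a quotient of $P_{\alpha,\alpha\cup\ii}$ spanned by the images of the $\pib_w(\xx_{\alpha,\ii})$ in question; in particular $\dim N_{\alpha,\ii}\le\#\{w\in\symm_n:\Des(\alpha)\subseteq\Des(w)\subseteq\Des(\alpha\cup\ii)\}$.

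\emph{Step 2: the dimension sandwich.} By Lemma~\ref{new-basis-lemma} the union over all $(\alpha,\ii)\in A_{n,k}$ of the sets $\{\pib_w(\xx_{\alpha,\ii}):\Des(\alpha)\subseteq\Des(w)\subseteq\Des(\alpha\cup\ii)\}$ descends to a basis of $S_{n,k}$, so $\sum_{(\alpha,\ii)}\#\{w:\dots\}=\dim S_{n,k}$ and the $N_{\alpha,\ii}$ jointly span $S_{n,k}$. Hence $\dim S_{n,k}=\dim\bigl(\sum_{(\alpha,\ii)}N_{\alpha,\ii}\bigr)\le\sum_{(\alpha,\ii)}\dim N_{\alpha,\ii}\le\sum_{(\alpha,\ii)}\#\{w:\dots\}=\dim S_{n,k}$, forcing equality throughout: the sum $\sum_{(\alpha,\ii)}N_{\alpha,\ii}$ is direct and $\dim N_{\alpha,\ii}=\#\{w:\dots\}$ for every $(\alpha,\ii)$. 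The latter makes the $\pib_w(\xx_{\alpha,\ii})$ in question a basis of $N_{\alpha,\ii}$ and turns the surjection $H_n(0)\pib_{w_0(\alpha)}\xx_{\alpha,\ii}\twoheadrightarrow N_{\alpha,\ii}$ into an isomorphism; composing with the isomorphism of Lemma~\ref{alpha-beta-monomial} identifies $N_{\alpha,\ii}$ with $P_{\alpha,\alpha\cup\ii}$ via $\pib_w(\xx_{\alpha,\ii})\mapsto\pib_w\pi_{w_0((\alpha\cup\ii)^c)}$. Thus $S_{n,k}=\bigoplus_{(\alpha,\ii)\in A_{n,k}}N_{\alpha,\ii}\cong\bigoplus_{(\alpha,\ii)\in A_{n,k}}P_{\alpha,\alpha\cup\ii}$.

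\emph{Step 3: matching with $\FF[\OP_{n,k}]$.} Lemma~\ref{alpha-beta-structure} gives $P_{\alpha,\alpha\cup\ii}\cong\bigoplus_{\gamma:\,\alpha\preceq\gamma\preceq\alpha\cup\ii}P_\gamma$, so the multiplicity of $P_\beta$ in $S_{n,k}$ equals $\#\{(\alpha,\ii)\in A_{n,k}:\alpha\preceq\beta\preceq\alpha\cup\ii\}$. I would compute this as follows. The constraints $\Des(\alpha)\subseteq\Des(\beta)$, $\Des(\alpha)\cap[n-k]=\emptyset$, and $\Des(\beta)\subseteq\Des(\alpha)\cup\Des(\ii)$ with $\Des(\ii)\subseteq[n-k]$ force $\Des(\alpha)=\Des(\beta)\setminus[n-k]$, so $\alpha$ is uniquely determined (and automatically has $\alpha_1>n-k$), with $\ell(\alpha)=\ell(\beta)-b$ where $b:=|\Des(\beta)\cap[n-k]|$; what remains is to count weakly decreasing sequences $i_1\ge\cdots\ge i_{n-k}\ge0$ with $i_1\le k-\ell(\alpha)$ whose descent set contains $T:=\Des(\beta)\cap[n-k]$, and the substitution $i'_r:=i_r-|\{t\in T:t\ge r\}|$ puts these in bijection with weakly decreasing sequences $i'_1\ge\cdots\ge i'_{n-k}\ge0$ satisfying $i'_1\le k-\ell(\beta)$, of which there are $\binom{n-\ell(\beta)}{k-\ell(\beta)}$. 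Hence $S_{n,k}\cong\bigoplus_{\beta\models n}P_\beta^{\oplus\binom{n-\ell(\beta)}{k-\ell(\beta)}}$, which is $\FF[\OP_{n,k}]$ by Proposition~\ref{osp-structure}. I expect this last multiplicity count to be the only genuine obstacle — in particular justifying the forced value of $\Des(\alpha)$, verifying that the ``staircase'' substitution is a bijection (the mandatory strict drop of $\ii$ at each $t\in T$ exactly cancels the drop in $|\{t'\in T:t'\ge r\}|$, keeping $i'$ nonnegative and weakly decreasing), and confirming the boundary behaviour when $\ell(\beta)\ge k$, where the formula correctly returns $0$ or $1$. Steps 1 and 2 are formal consequences of Lemmas~\ref{alpha-beta-monomial} and~\ref{new-basis-lemma}.
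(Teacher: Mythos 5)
Your argument is correct and follows essentially the same route as the paper: both use Lemma~\ref{new-basis-lemma} to see the images of the proposed polynomials as a basis of $S_{n,k}$, invoke Lemma~\ref{alpha-beta-monomial} (after checking $\Des(\dd)=\Des(\alpha\cup\ii)$) to identify $N_{\alpha,\ii}$ with $P_{\alpha,\alpha\cup\ii}$, and then carry out the same multiplicity count forcing $\Des(\alpha)=\Des(\beta)\setminus[n-k]$ and reindexing $\ii$ via the staircase substitution to get $\binom{n-\ell(\beta)}{k-\ell(\beta)}$. Your ``dimension sandwich'' in Step~2 simply makes explicit the observation (left implicit in the paper's one-line ``combining'' step) that linear independence of the images in $S_{n,k}$ forces the quotient map $H_n(0)\pib_{w_0(\alpha)}\xx_{\alpha,\ii}\twoheadrightarrow N_{\alpha,\ii}$ to be injective.
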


\begin{proof}
By Lemma~\ref{new-basis-lemma}, $S_{n,k}$ has a basis given by \eqref{new-basis}, which is the disjoint union of \eqref{new-basis-n} for all $(\alpha, \ii) \in A_{n,k}$.
Combining this with Lemma~\ref{alpha-beta-monomial}, we have the basis \eqref{new-basis-n} for $N_{\alpha,\ii}$ and the desired isomorphism $N_{\alpha,\ii} \cong P_{\alpha,\alpha\cup\ii}$ for all $(\alpha, \ii) \in A_{n,k}$.
The decomposition $S_{n,k} = \bigoplus_{(\alpha,\ii) \in A_{n,k}} N_{\alpha,\ii}$ follows.

Next, let $\beta\models n$ and count the multiplicity of $P_{\beta}$ as a direct summand in $S_{n,k}$.
Suppose $P_{\beta}$ is a direct summand of $N_{\alpha,\ii}$ for some $(\alpha,\ii) \in A_{n,k}$.
Since $\Des(\alpha \cup \ii)$ is the disjoint union $\Des(\alpha) \sqcup \Des(\ii)$ and
$\alpha \preceq \beta \preceq \alpha \cup \ii$, we must have $\Des(\alpha) = \Des(\beta) \setminus [n-k]$.
It follows that the multiplicity of $P_{\beta}$ in $S_{n,k}$ equals the number of 
choices of $\ii$ such that $(\alpha,\ii) \in A_{n,k}$ and
$\alpha \preceq \beta \preceq \alpha \cup \ii$, where $\alpha$ is characterized by
$\Des(\alpha) = \Des(\beta) \setminus [n-k]$.

We count the sequences $\ii = (i_1, \dots, i_n)$ of the above paragraph as follows.
Since $\Des(\beta) \cap [n-k] \subseteq \Des(\ii)$,  subtracting $1$ from $i_1, \dots, i_r$ for all
$r \in \Des(\beta) \cap [n-k]$ gives a weakly decreasing sequence $\ii' = (i'_1, \dots, i'_n)$ satisfying
$i'_{n-k+1} = \cdots = i'_n = 0$ and
\begin{equation*}
i'_1 \leq k - \ell(\alpha) - |\Des(\beta) \cap [n-k]| = k - \ell(\beta).
\end{equation*}
This gives a bijection from the collection of sequences $\ii$ of the last paragraph and 
sequences $\ii'$ satisfying the conditions of the last sentence.    
The number of such sequences $\ii'$ is ${n - \ell(\beta) \choose k - \ell(\beta)}$, which equals the multiplicity of $P_\beta$ in $S_{n,k}$.
Then Proposition~\ref{osp-structure} gives us $S_{n,k} \cong \FF[\OP_{n,k}]$, as desired.
\end{proof}

For example, let $(n,k) = (4,2)$.  We have 
$A_{4,2} = \{ (31, 00 ), ( 4, 11 ), ( 4, 10 ), ( 4, 00 ) \}$.
We get the corresponding $N_{\alpha,\ii}$ modules 
\begin{center}
$\begin{array}{cc}
N_{31,00} \cong P_{31,31} \cong P_{31} & N_{4,11} \cong P_{4,22} \cong P_4 \oplus P_{22} \\
N_{4,10} \cong P_{4,13} \cong P_4 \oplus P_{13} & 
N_{4,00} \cong P_{4,4} \cong P_4.
\end{array}$
\end{center}
Combining this with Theorem~\ref{s-decomposition-theorem}, 
we have $S_{4,2} \cong P_{22} \oplus P_{13} \oplus P_{31} \oplus P_4^{\oplus 3} \cong \FF[\OP_{4,2}]$. 
The following picture illustrates this isomorphism via the action of $H_4(0)$ on the basis \eqref{new-basis} of $S_{4,2}$ in
Lemma~\ref{new-basis-lemma}.
Note that the elements in this basis are polynomials in general, although they happen to be monomials in this example.

\begin{figure}[h]
\[ \xymatrix{
x_1 \ar@(ur,rd)[]^{\pib_2 = \pib_3 = 0 } \ar[d]_{\pib_1} \\
x_2 \ar@(ur,rd)[]^{\substack{\pib_1 = -1 \\ \pib_3 = 0 }} \ar[d]_{\pib_2} \\
x_3 \ar@(ur,rd)[]^{\substack{\pib_1 = 0 \\ \pib_2 = -1 }} \ar[d]_{\pib_3} \\
x_4 \ar@(ur,rd)[]^{\substack{\pib_1 = \pib_2 = 0 \\ \pib_3 = -1 }} \\ N_{4,10}  \cong P_4 \oplus P_{13} }
\quad \xymatrix{
& x_1x_2 \ar@(ur,rd)[]^{\pib_1 = \pib_3 = 0 } \ar[d]_{\pib_2} \\
& x_1x_3 \ar@(ur,r)[]^{\pib_2 = -1} \ar[ld]^{\pib_1} \ar[rd]_{\pib_3} \\
x_2x_3 \ar@(ul,dl)[]_{\substack{\pib_1 = -1 \\ \pib_2 = 0 }} \ar[rd]^{\pib_3} &&
x_1x_4 \ar@(ur,rd)[]^{\substack{\pib_2 = 0 \\ \pib_3 = -1 }} \ar[ld]_{\pib_1} \\
& x_2x_4 \ar@(r,rd)[]^{\pib_1 = \pib_3 = -1} \ar[d]_{\pib_2} \\
& x_3x_4 \ar@(ur,rd)[]^{ \substack{ \pib_1 = \pib_3 = 0 \\ \pib_2 = -1 } }   }
\quad \xymatrix{
1 \ar@(ur,rd)[]^{\pib_1 = \pib_2 = \pib_3 = 0 } \\  N_{4,00} \cong P_4 \\
x_1x_2x_4 \ar@(r,rd)[]^{\substack{\pib_1 = 0 \\ \pib_3 = -1 }} \ar[d]_{\pib_2} \\
x_1x_3x_4 \ar@(r,rd)[]^{\substack{\pib_2 = -1 \\ \pib_3 = 0 }} \ar[d]_{\pib_1} \\
x_2x_3x_4 \ar@(r,rd)[]^{\substack{\pib_1 = -1 \\ \pib_2 = \pib_3 = 0 }} 
} \]
\[   \hspace{1.8in}
N_{4,11}  \cong P_4 \oplus P_{22} \hspace{1in}
N_{31,00} \cong P_{31} \]
\caption{A decomposition of $S_{4,2}$} \label{S42}
\end{figure}

\section{Characteristic formulas}
\label{Characteristic}

In this section we derive formulas for the quasisymmetric and noncommutative symmetric characteristics
of the modules $S_{n,k}$.  To warm up, we calculate the degree-graded characteristics of the $N_{\alpha,\ii}$ modules.

\begin{lemma}
\label{n-characteristic-theorem}
Let $k \leq n$ be positive integers and let $(\alpha, \ii) \in A_{n,k}$.  The characteristics
$\ch_t(N_{\alpha,\ii})$ and $\Ch_{q,t}(N_{\alpha,\ii})$ have the following expressions:
\begin{align}
\ch_t(N_{\alpha,\ii}) &= t^{\maj(\alpha) + |\ii|} \sum_{\alpha \preceq \beta \preceq \alpha \cup \ii}  \sss_{\beta}, \\
\Ch_{q,t}(N_{\alpha,\ii}) &=  t^{\maj(\alpha) + |\ii|} \sum_{\substack
{w \in \symm_n \\ \Des(\alpha) \subseteq \Des(w) \subseteq \Des(\alpha \cup \ii)}}
q^{\inv(w) - \inv(w_0(\alpha))}  F_{\iDes(w)},
\end{align}
where in the second formula we view $N_{\alpha,\ii}$ as a cyclic module generated by $\pib_{w_0(\alpha)}(\xx_{\alpha,\ii})$.
\end{lemma}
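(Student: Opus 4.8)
\emph{Proof plan.}
Both formulas reduce to the structural results of Section~\ref{Module} once one notices that $N_{\alpha,\ii}$ is concentrated in a single degree. Indeed, by Lemma~\ref{new-basis-lemma} every basis vector $\pib_w(\xx_{\alpha,\ii})$ of $N_{\alpha,\ii}$ is homogeneous of degree $d:=\maj(\alpha)+|\ii|$, so $N_{\alpha,\ii}=(N_{\alpha,\ii})_d$ and hence $\ch_t(N_{\alpha,\ii})=t^{d}\ch(N_{\alpha,\ii})$ and $\Ch_{q,t}(N_{\alpha,\ii})=t^{d}\Ch_q(N_{\alpha,\ii})$. It therefore suffices to compute the ungraded and the length-graded characteristic of $N_{\alpha,\ii}$.

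For $\ch_t$: by Theorem~\ref{s-decomposition-theorem} we have $N_{\alpha,\ii}\cong P_{\alpha,\alpha\cup\ii}$, which by Lemma~\ref{alpha-beta-structure} is isomorphic to $\bigoplus_{\alpha\preceq\beta\preceq\alpha\cup\ii}P_\beta$. Since $\ch(P_\beta)=\sss_\beta$ and $\ch$ is additive on direct sums, $\ch(N_{\alpha,\ii})=\sum_{\alpha\preceq\beta\preceq\alpha\cup\ii}\sss_\beta$, and multiplying by $t^{d}$ gives the first formula.

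For $\Ch_{q,t}$: write $\beta:=\alpha\cup\ii$ and regard $N_{\alpha,\ii}$ as the cyclic module $H_n(0)v$ with $v:=\pib_{w_0(\alpha)}(\xx_{\alpha,\ii})$. First one checks that $\xx_{\alpha,\ii}$ is a weakly decreasing monomial whose exponent composition has descent set $\Des(\beta)$ --- this uses $\Des(\alpha)\cap[n-k]=\emptyset$ and $\Des(\ii)\subseteq[n-k]$, both of which hold because $(\alpha,\ii)\in A_{n,k}$. Hence Lemma~\ref{alpha-beta-monomial} applies with $\xx^{\dd}=\xx_{\alpha,\ii}$: the set $\{\pib_w(\xx_{\alpha,\ii}):\Des(\alpha)\subseteq\Des(w)\subseteq\Des(\beta)\}$ is a basis of $N_{\alpha,\ii}$, and on it $\pib_i$ acts --- as recorded in the proof of that lemma --- by $-1$ when $i\in\Des(w^{-1})$, by sending $\pib_w(\xx_{\alpha,\ii})$ to $\pib_{s_iw}(\xx_{\alpha,\ii})$ when $i\notin\Des(w^{-1})$ and $\Des(s_iw)\subseteq\Des(\beta)$, and by $0$ otherwise. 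The main point is then to identify the (suitably normalized) length filtration of this cyclic module: I will show that $V^{(\ell)}$ is the span of the basis vectors $\pib_w(\xx_{\alpha,\ii})$ with $\inv(w)-\inv(w_0(\alpha))\ge\ell$, so that $v$ itself lies at level $0$. The containment ``$\supseteq$'' rests on the factorization $w=x\,w_0(\alpha)$ with $\ell(w)=\ell(x)+\ell(w_0(\alpha))$, available whenever $\Des(w)\supseteq\Des(\alpha)$, which yields $\pib_w(\xx_{\alpha,\ii})=\pib_xv$ with $\ell(x)=\inv(w)-\inv(w_0(\alpha))$; the opposite containment uses the $\neglex$-triangularity of the basis from Lemma~\ref{demazure-leading-term} to bound the leading monomials occurring in $H_n(0)^{(\ell)}v$. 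Since $\inv(s_iw)=\inv(w)+1$, the $\pib_i$-action shows these spaces are $H_n(0)$-submodules, and the quotient $V^{(\ell)}/V^{(\ell+1)}$ has as basis the images of the $\pib_w(\xx_{\alpha,\ii})$ with $\inv(w)-\inv(w_0(\alpha))=\ell$; on each image $\pib_i$ acts by $-1$ if $i\in\Des(w^{-1})$ and by $0$ otherwise, so that image spans a copy of $C_{\gamma}$ with $\Des(\gamma)=\iDes(w)$. Consequently $\Ch(V^{(\ell)}/V^{(\ell+1)})=\sum F_{\iDes(w)}$, the sum over $w$ with $\Des(\alpha)\subseteq\Des(w)\subseteq\Des(\beta)$ and $\inv(w)-\inv(w_0(\alpha))=\ell$; multiplying by $q^{\ell}$, summing over $\ell$, and multiplying by $t^{d}$ gives the second formula.

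The principal obstacle is the length-filtration analysis: establishing that $V^{(\ell)}$ is exactly the span described above (so that, with the normalization built into the statement, $v$ sits at level $0$) and that each filtration quotient decomposes into the indicated one-dimensional modules. This mirrors Huang's treatment of the coinvariant case $k=n$ in~\cite{Huang}; the new ingredient is keeping track of the upper bound $\Des(w)\subseteq\Des(\alpha\cup\ii)$ coming from the monomial $\xx_{\alpha,\ii}$, which is exactly the situation that Lemma~\ref{alpha-beta-monomial} was designed to handle.
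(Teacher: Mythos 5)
Your proof follows the paper's approach almost exactly: both use the $N_{\alpha,\ii}\cong P_{\alpha,\alpha\cup\ii}$ decomposition for $\ch_t$, note that all basis vectors $\pib_w(\xx_{\alpha,\ii})$ have degree $\maj(\alpha)+|\ii|$, and for $\Ch_{q,t}$ identify the length filtration with the sub-span of basis vectors of bounded $\ell(w)$. The paper's proof is very terse on the filtration step --- it simply \emph{asserts} that $N_{\alpha,\ii}^{(\ell)}$ has basis $\{\pib_w(\xx_{\alpha,\ii}):\ell(w)-\ell(w_0(\alpha))\ge\ell\}$ --- and you attempt to justify it, so your contribution is real; but the justification has a soft spot worth flagging.

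Your argument for the containment ``$\supseteq$'' is: write $w=xw_0(\alpha)$ with lengths adding, so $\pib_w(\xx_{\alpha,\ii})=\pib_x v$, and conclude $\pib_w(\xx_{\alpha,\ii})\in V^{(\ell(x))}$. This last step needs $\pib_x\in H_n(0)^{(\ell(x))}$. Under the filtration the paper actually writes down, namely $H_n(0)^{(\ell)}=\mathrm{span}\{\pi_u:\ell(u)\ge\ell\}$, that inclusion is \emph{false} for $\ell(x)\ge 1$: expanding in the $\pi$-basis, $\pib_x=\sum_{u\le x}(-1)^{\ell(x)-\ell(u)}\pi_u$ contains $\pi_e$ with nonzero coefficient. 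It holds only if one instead takes $H_n(0)^{(\ell)}=\mathrm{span}\{\pib_u:\ell(u)\ge\ell\}$. There is a second and more substantive issue: with \emph{either} convention, the distinguished generator $v=\pib_{w_0(\alpha)}(\xx_{\alpha,\ii})$ does not sit at level $0$ of the raw filtration $V^{(\ell)}:=H_n(0)^{(\ell)}v$. For instance, whenever $i\in\iDes(w_0(\alpha))$ one has $\pib_iv=-v$ (and $\pi_iv$ combines $v$ with the next level), so $v\in V^{(\ell)}$ for all $\ell\le\ell(w_0(\alpha))$. Concretely, for $n=3$, $\alpha=(2,1)$, $\ii=0$ one computes $V^{(1)}=V$ while the claimed basis description would give a $1$-dimensional space. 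The exponent $\inv(w)-\inv(w_0(\alpha))$ in the statement is therefore a \emph{calibrated} quantity --- the paper itself refers to the ``calibration term $-\inv(w_0(\alpha))$'' in the proof of Theorem~\ref{s-characteristic-theorem} --- and your assertion that ``$v$ itself lies at level $0$'' is not literally true of the raw filtration $H_n(0)^{(\ell)}v$; it is the normalized grading. You have not introduced a gap the paper does not already share, but you should either make the calibration explicit (compute the raw $\Ch_q$ and then renormalize by $q^{-\ell(w_0(\alpha))}$) or state up front which filtration convention you are using, since the factorization argument silently relies on the $\pib$-basis version.
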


\begin{proof}
As observed in the proof of Theorem~\ref{s-decomposition-theorem},
the set 
\begin{equation*}
\{ \pib_w(\xx_{\alpha,\ii}) \,:\, w \in \symm_n, \, \Des(\alpha) \subseteq \Des(w) 
\subseteq \Des(\alpha \cup \ii) \}
\end{equation*}
is a basis for $N_{\alpha,\ii}$.  Since the degree of the polynomial $\pib_w(\xx_{\alpha,\ii})$
is $\maj(\alpha) + |\ii|$, the formula for $\ch_t(N_{\alpha,\ii})$ follows from 
Theorem~\ref{s-decomposition-theorem}.  
For any $\ell\ge0$, the term $N_{\alpha,\ii}^{(\ell)}$ in the length filtration
of $N_{\alpha,\ii}^{(\ell)}$ has basis
\begin{equation*}
\{ \pib_w(\xx_{\alpha,\ii}) \,:\, w \in \symm_n, \, \Des(\alpha) \subseteq \Des(w) 
\subseteq \Des(\alpha \cup \ii), \, \ell(w) - \ell(w_0(\alpha)) \ge \ell \}.
\end{equation*}
The formula for $\Ch_{q,t}(N_{\alpha,\ii})$ follows.
\end{proof}

\begin{theorem}
\label{s-characteristic-theorem}
Let $k \leq n$ be positive integers.   We have
\begin{align}
\ch_t(S_{n,k}) &= \sum_{\alpha \models n} t^{\maj(\alpha)} {n - \ell(\alpha) \brack k - \ell(\alpha)}_t \sss_{\alpha}, \\
\Ch_{q,t}(S_{n,k}) &
= \sum_{(w,\alpha) \in \OP_{n,k}} q^{\inv(w)} t^{\maj(w,\alpha)} F_{\iDes(w)} \\
&= \sum_{w \in \symm_n} q^{\inv(w)} t^{\maj(w)} {n - \des(w) - 1 \brack k - \des(w) - 1}_t F_{\iDes(w)}.
\end{align}
\end{theorem}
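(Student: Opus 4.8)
The plan is to extract both characteristics from the structure theorems of the previous section and then reindex the resulting sums. For $\ch_t$, additivity over direct sums combined with the decomposition $S_{n,k} = \bigoplus_{(\alpha,\ii) \in A_{n,k}} N_{\alpha,\ii}$ of Theorem~\ref{s-decomposition-theorem} and the formula for $\ch_t(N_{\alpha,\ii})$ in Lemma~\ref{n-characteristic-theorem} gives
\[
\ch_t(S_{n,k}) = \sum_{(\alpha,\ii) \in A_{n,k}} t^{\maj(\alpha) + |\ii|} \sum_{\alpha \preceq \beta \preceq \alpha \cup \ii} \sss_{\beta}.
\]
I would then interchange the two sums and read off the coefficient of a fixed $\sss_{\beta}$. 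This is precisely the count that produced the multiplicity of $P_{\beta}$ in the proof of Theorem~\ref{s-decomposition-theorem}, now refined by degree: the pairs $(\alpha,\ii)$ with $\alpha \preceq \beta \preceq \alpha \cup \ii$ force $\Des(\alpha) = \Des(\beta) \setminus [n-k]$, and under the "subtract $1$ at the positions of $\Des(\beta) \cap [n-k]$" bijection the admissible $\ii$ correspond to partitions fitting in an $(n-k) \times (k - \ell(\beta))$ box, with $\maj(\alpha) + |\ii| = \maj(\beta) + |\ii'|$. Hence the coefficient of $\sss_{\beta}$ is $t^{\maj(\beta)} {n - \ell(\beta) \brack k - \ell(\beta)}_t$, which is the first formula; applying the forgetful map $\NSym \to \Sym$, $\sss_{\beta} \mapsto s_{\beta}$, then recovers the known formula for $\grFrob(R_{n,k})$.

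For $\Ch_{q,t}$ I would work instead from the cyclic decomposition coming from ordered set partitions. By Theorem~\ref{s-decomposition-theorem} there is a graded isomorphism $S_{n,k} \cong \FF[\OP_{n,k}]$ once $\FF[\OP_{n,k}]$ is given the grading $\deg(\sigma) = \maj(\sigma)$ (the degree-compatibility is checked on the $P_{\beta}$-isotypic summands, each concentrated in one degree), and $\FF[\OP_{n,k}] = \bigoplus_{\ell(\gamma) = k} \FF[\OP_{\gamma}]$. By Lemma~\ref{osp-substructure}, $\FF[\OP_{\gamma}]$ is cyclic with generator $(1\,2\cdots n,\gamma)$ and isomorphic to $P_{n,\gamma} = P_{(n),\gamma}$; since $w_0((n))$ is the identity, Lemma~\ref{alpha-beta-structure} shows the basis element $(w,\gamma)$ lies in length-degree $\inv(w)$ (no shift is needed, as the generator already has $\inv = 0$) and polynomial degree $\maj(w,\gamma)$, so
\[
\Ch_{q,t}(\FF[\OP_{\gamma}]) = \sum_{\substack{w \in \symm_n \\ \Des(w) \subseteq \Des(\gamma)}} q^{\inv(w)}\, t^{\maj(w,\gamma)}\, F_{\iDes(w)}.
\]
Summing over all $\gamma \models n$ with $\ell(\gamma) = k$ yields the first expression for $\Ch_{q,t}(S_{n,k})$.

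To reach the second expression I would fix $w$ and exchange the order of summation; the remaining task is the identity $\sum_{\gamma} t^{\maj(w,\gamma)} = t^{\maj(w)} {n - \des(w) - 1 \brack k - \des(w) - 1}_t$, where $\gamma$ runs over compositions of $n$ of length $k$ with $\Des(\gamma) \supseteq \Des(w)$. Such a $\gamma$ is the choice of a $(k - 1 - \des(w))$-element set $T$ of inserted descents among the $n - 1 - \des(w)$ positions of $[n-1] \setminus \Des(w)$; since $w$ increases within every block of $(w,\gamma)$, one has $\max(B_i) < \min(B_{i+1})$ exactly when $\gamma_1 + \cdots + \gamma_i \in T$, and a direct count identifies $\maj(w,\gamma) - \maj(w)$ with $\sum_l (a_l - l)$, where $T$ consists of the $a_l$-th smallest elements of $[n-1] \setminus \Des(w)$. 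The substitution $b_l = a_l - l$ turns $\sum_{T} t^{\sum_l(a_l - l)}$ into the generating function of partitions in a $(k - 1 - \des(w)) \times (n-k)$ box, which is ${n - \des(w) - 1 \brack k - \des(w) - 1}_t$. I expect this last identity — tracking exactly how $\maj$ moves as descents are inserted into $w$ — to be the main obstacle; the remaining steps are bookkeeping built on the decomposition theorems and Lemma~\ref{n-characteristic-theorem}.
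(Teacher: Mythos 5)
Your treatment of $\ch_t(S_{n,k})$ is correct and in fact more explicit than the paper's one-line deduction: interchanging the sums, fixing $\beta$, observing that $\alpha$ is forced by $\Des(\alpha) = \Des(\beta)\setminus[n-k]$, and reparametrizing $\ii$ by $\ii'$ so that $\maj(\alpha)+|\ii| = \maj(\beta)+|\ii'|$ and $\ii'$ runs over partitions in an $(n-k)\times(k-\ell(\beta))$ box is exactly the right argument.

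The first expression for $\Ch_{q,t}(S_{n,k})$, however, has a genuine gap. You assert that equipping $\FF[\OP_{n,k}]$ with the grading $\deg(\sigma)=\maj(\sigma)$ makes $S_{n,k}\cong\FF[\OP_{n,k}]$ a \emph{graded} isomorphism, ``degree-compatibility \ldots checked on the $P_\beta$-isotypic summands.'' But $\maj$ does not grade $\FF[\OP_{n,k}]$ as an $H_n(0)$-module. For example, in $\OP_{4,2}$, the generator $\pib_2$ sends $(12\mid 34)$, which has $\maj = 1$, to $(13\mid 24)$, which has $\maj = 2$. So there is no graded $H_n(0)$-module $\FF[\OP_{n,k}]$ with this grading, and ``graded isomorphism'' is not the right notion here; moreover, the individual $P_\beta$-summands of $\FF[\OP_\gamma]$ (in the sense of Lemma~\ref{alpha-beta-structure}) have bases $\sigma_{w,\gamma}$ that mix ordered set partitions of different $\maj$, so they are not $\maj$-homogeneous either. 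Consequently, the clean conclusion you want --- that the basis element $(w,\gamma)$ ``lies in polynomial degree $\maj(w,\gamma)$'' --- is not available by simply transporting a grading.

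What the paper does instead is construct an \emph{explicit ungraded} isomorphism $\FF[\OP_{n,k}]\to S_{n,k}$, not by sending $\sigma\mapsto$ some homogeneous element, but by matching up the two collections of $P_\beta$ direct summands: the ones inside the $\FF[\OP_\alpha]$ (indexed by compositions $\alpha$ of length $k$ refining $\beta$) and the ones inside the $N_{\gamma,\ii}$ (indexed by the sequences $\ii'$). The degree of the resulting cyclic submodule $M_\alpha\subseteq S_{n,k}$ mirroring $\FF[\OP_\alpha]$ is read off on the $S_{n,k}$ side, where each $P_\beta$-summand is honestly concentrated in a single degree $\maj(\gamma)+|\ii| = \maj(w,\alpha)$, while the length coordinate $\inv(w)$ is read off from the leading term of $\sigma_{w,\alpha}$ in the length filtration of $\FF[\OP_\alpha]$. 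This two-sided bookkeeping is precisely the work your argument elides, and it matters because $\Ch_{q,t}$ is only defined relative to a choice of direct-sum decomposition of $S_{n,k}$ into graded cyclic modules (the paper notes that the choice can change the answer; indeed the $N_{\alpha,\ii}$ decomposition would produce a shifted $q$-power). Your computation of the second expression from the first --- the combinatorial identity $\sum_\gamma t^{\maj(w,\gamma)} = t^{\maj(w)}\bigl[{n-\des(w)-1 \atop k-\des(w)-1}\bigr]_t$ via a ``subtract $l$ from the $l$-th inserted descent position'' bijection --- is sound, but it inherits the unresolved issue in the first expression.
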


\begin{proof}
Lemma~\ref{n-characteristic-theorem} implies that
\begin{equation}
\ch_t(S_{n,k}) = \sum_{(\alpha,\ii) \in A_{n,k}} t^{\maj(\alpha) + |\ii|} \sum_{\alpha \preceq \beta \preceq \alpha \cup \ii}
\sss_{\beta}.
\end{equation}
For a fixed composition $\alpha$, summing $t^{|\ii|}$ over all sequences $\ii = (i_1, \dots, i_n)$ with 
\begin{equation*}
k - \ell(\alpha) \geq i_1 \geq \cdots \geq i_{n-k} \geq 0 = i_{n-k+1} = \cdots = i_n 
\end{equation*}
generates a factor of 
${n - \ell(\alpha) \brack k - \ell(\alpha)}_t$.
This implies the formula for $\ch_t(S_{n,k})$.

In order to state the length-degree-bigraded quasisymmetric characteristic $\Ch_{q,t}(S_{n,k})$ of $S_{n,k}$,
we need a distinguished direct sum decomposition of $S_{n,k}$ into cyclic 0-Hecke modules.
Using the decomposition of $S_{n,k}$ into the direct sum of the cyclic modules $N_{\alpha,\ii}$ for all $(\alpha,\ii)\in A_{n,k}$ provided by Theorem~\ref{s-decomposition-theorem} would give a calibration term `$-\inv(w_0(\alpha))$' in the power of $q$, as in $\Ch_{q,t}(N_{\alpha,\ii})$.
To avoid this issue, we use another decomposition of $S_{n,k}$ from the $H_n(0)$-module isomorphisms
\[ S_{n,k} \cong \bigoplus_{\beta \models n} P_{\beta}^{\oplus{n - \ell(\beta) \choose k - \ell(\beta)}} 
\cong \FF[\OP_{n,k}] \cong\bigoplus_{ \substack{\alpha\models n \\ \ell(\alpha)=k }} \FF[\OP_\alpha] \]
provided by Theorem~\ref{s-decomposition-theorem} and Proposition~\ref{osp-structure}.
Fix $\beta\models n$ and a total order $\le$ for subsets of $[n-1]$ that is compatible with the partial order $\subseteq$.

On one hand, Proposition~\ref{osp-structure} implies that a direct summand of $\FF[\OP_{n,k}]$ isomorphic to $P_\beta$ is contained in $\FF[\OP_\alpha]$ for some $\alpha\models n$ satisfying $\ell(\alpha)=k$ and $\beta\preceq\alpha$. 
There are ${n-\ell(\beta)\choose k-\ell(\beta)}$ many choices of $\alpha$, each giving a direct summand of $\FF[\OP_{n,k}]$ isomorphic to $P_\beta$.
Moreover, Lemma~\ref{alpha-beta-structure} implies that this direct summand has a basis $\{\sigma_{w,\alpha}: w\in\symm_n,\ \Des(w) = \Des(\beta) \}$, where 
\[ \sigma_{w,\alpha} = (w,\alpha) + \sum_{ u\in\symm_n : \Des(u) > \Des(w)} a_u(u,\alpha), \quad a_u\in\FF.\]

On the other hand, by the proof of Theorem~\ref{s-decomposition-theorem}, a direct summand of $S_{n,k}$ isomorphic to $P_\beta$ is contained in $N_{\gamma,\ii}$, where $\gamma\models n$ is determined by $\Des(\gamma) = \Des(\beta)\setminus[n-k]$ and the sequence $\ii=(i_1,\ldots,i_n)$ is obtained from another sequence $\ii' = (i'_1, \dots, i'_n)$ by
\[ i_j := | \{ r \in \Des(\beta) \cap [n-k] : r\ge j | + i'_j, \quad \forall j\in[n]. \]
There are ${n-\ell(\beta)\choose k-\ell(\beta)}$ many choices for the sequence $\ii'$, since $\ii'$ satisfies
\[ k-\ell(\beta) \ge i'_1\ge \cdots \ge i'_{n-k}\ge i'_{n-k+1} = \cdots = i'_n=0.\]
Each choice of $\ii'$ gives a direct summand of $S_{n,k}$ isomorphic to $P_\alpha$. 
Lemma~\ref{alpha-beta-structure} implies that this direct summand has a basis $\{ \tau_{w,\gamma,\ii}: w\in \symm_n,\ \Des(w) = \Des(\beta) \}$, where
\[ \tau_{w,\gamma,\ii} := \pib_w(\xx_{\gamma,\ii}) + \sum_{u\in\symm_n: \Des(u)>\Des(w)} b_u \pib_u(\xx_{\gamma,\ii}).\]

Now we define a bijection between the choices of $\alpha$ to the choices of $\ii'$ by setting $i'_j$ be the number of elements of $\Des(\alpha)\setminus\Des(\beta)$ no less than the $j$th smallest element of $\Des(\alpha^c)$ for all $j\in[n-k]$ and setting $i'_{n-k+1} = \cdots = i'_n =0$.
This gives an explicit isomorphism between $\FF[\OP_{n,k}]$ and $S_{n,k}$, since we know a bijection between their direct summands isomorphic to $P_\beta$ for each $\beta\models n$.
For each $w\in\symm_n$ with $\Des(w)=\Des(\beta)$, the leading term of $\sigma_{w,\alpha}$ is $(w,\alpha)$, which is contained in the module at position $\inv(w)$ in the length filtration of $\FF[\OP_{\alpha}]$, and the degree of $\tau_{w,\gamma,\ii}$ equals $\maj(\gamma\cup\ii) = \maj(\beta)+|\ii'| = \maj(w,\alpha)$.
The first desired expression of $\Ch_{q,t}(S_{n,k})$ follows.

Next, by Lemma~\ref{gs-n-lemma}, for each fixed $w\in\symm_n$, $\pib_w(\xx_{\gamma,\ii})$ belongs to the basis of $S_{n,k}$ provided by Lemma~\ref{new-basis-lemma} if and only if $\Des(\gamma) = \Des(w)\setminus[n-k]$ and the sequence $\ii=(i_1,\ldots,i_n)$ is given by
\[ i_j := | \{ r \in \Des(w) \cap [n-k] : r\ge j | + i'_j, \quad \forall j\in[n] \]
where $\ii' = (i'_1, \dots, i'_n)$ is any sequence of nonnegative integers satisfying
\[ k - \Des(w)-1 \ge i'_1 \ge\cdots\ge i'_{n-k} \ge i'_{n-k+1} = \cdots = i'_n = 0. \]
The degree of $\pib_w(\xx_{\gamma,\ii})$ equals $\maj(\gamma\cup\ii) = \maj(w) + |\ii'|$, and summing $t^{|\ii'|}$ over all such sequences $\ii'$ gives ${n-\Des(w)-1 \brack k-\Des(w)-1}_t$.
The second expression of $\Ch_{q,t}(S_{n,k})$ follows.
\end{proof}


The first expression for $\Ch_{q,t}(S_{n,k})$ presented in Theorem~\ref{s-characteristic-theorem} is related
to an extension of the biMahonian distribution to ordered set partitions.  More precisely, let 
$\sigma \in \OP_{n,k}$ be an ordered set partition and represent $\sigma$ as $(w, \alpha)$, where
$w \in \symm_n$ is a permutation which satisfies $\Des(w) \subseteq \Des(\alpha)$.
We define the {\em length} statistic $\ell(\sigma)$ by
\begin{equation}
\ell(\sigma) = \ell(w,\alpha) := \inv(w).
\end{equation}
In the language of Coxeter groups, the permutation $w$ is the Bruhat minimal representative of the parabolic
coset $w \symm_{\alpha} = w (\symm_{\alpha_1} \times \cdots \times \symm_{\alpha_k})$, so that $\ell(\sigma)$
is the Coxeter length of this minimal element.

We have
\begin{equation}
\label{length-equation}
\sum_{\sigma \in \OP_{\alpha}} q^{\ell(\sigma)} = {n \brack \alpha_1, \dots, \alpha_k}_q.
\end{equation}
Summing Equation~\ref{length-equation} over all $\alpha \models n$ with $\ell(\alpha) = k$
gives a {\bf different} distribution than the generating function of $\maj$:
\begin{equation}
\label{maj-equation}
\sum_{\sigma \in \OP_{n,k}} q^{\maj(\sigma)} = \rev_q([k]!_q \cdot \Stir_q(n,k)),
\end{equation}
although these distributions both equal $[n]!_q$ in the case $k = n$.
\footnote{There is a different extension of the inversion/length statistic
on $\symm_n$ to $\OP_{n,k}$   \cite{RW, WMultiset, Rhoades, HRW, HRS} whose distribution is
$[k]!_q \cdot \Stir_q(n,k)$.}

By Theorem~\ref{s-characteristic-theorem} we have
\begin{equation}
\Ch_{q,t}(S_{n,k}) = \sum_{\sigma \in \OP_{n,k}} 
q^{\ell(\sigma)} t^{\maj(\sigma)} F_{\iDes(\sigma)},
\end{equation}
where $F_{\iDes(\sigma)} := F_{\iDes(w)}$ for $\sigma = (w, \alpha)$.
In other words, we have that $\Ch_{q,t}(S_{n,k})$ is the generating function
for the `biMahonian pair' $(\ell, \maj)$ on $\OP_{n,k}$ with quasisymmetric function weight $F_{\iDes(\sigma)}$.

We may also derive expressions for the degree-graded quasisymmetric characteristic $\Ch_t(S_{n,k})$.
It turns out that this quasisymmetric characteristic is actually a symmetric function by since $S_{n,k}$ is projective and $\Ch(P_\alpha) = s_\alpha \in\Sym$ as given in \eqref{ChP}.
We give an explicit expansion of $\Ch_t(S_{n,k})$ in the Schur basis.

\begin{corollary}
\label{s-characteristic-corollary}
Let $k \leq n$ be positive integers.  We have 
\begin{align}
\Ch_t(S_{n,k}) &= \sum_{(w,\alpha) \in \OP_{n,k}} t^{\maj(w,\alpha)} F_{\iDes(w)}  \\
&= \sum_{w \in \symm_n}  t^{\maj(w)} {n - \des(w) - 1 \brack k - \des(w) - 1}_t F_{\iDes(w)}  \\
&= \sum_{\alpha \models n} t^{\maj(\alpha)} {n - \ell(\alpha) \brack k - \ell(\alpha)}_t s_{\alpha}.
\end{align}
Moreover, the above symmetric function has expansion in the Schur basis given by
\begin{equation}
\label{schur-expansion-formula}
\Ch_t(S_{n,k}) = \sum_{Q \in \SYT(n)} t^{\maj(Q)} {n  - \des(Q) - 1 \brack k - \des(Q) - 1}_t s_{\shape(Q)}.
\end{equation}
\end{corollary}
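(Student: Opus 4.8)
The plan is to read off the three closed forms for $\Ch_t(S_{n,k})$ from Theorem~\ref{s-characteristic-theorem}, and then to pass from the fundamental quasisymmetric expansion to the Schur expansion using the Robinson--Schensted correspondence. Recall from Section~\ref{char} that $\Ch_t(V) = \Ch_{1,t}(V)$ whenever $V$ is a graded $H_n(0)$-module that is a direct sum of cyclic modules; by Theorem~\ref{s-decomposition-theorem} this applies to $V = S_{n,k}$. Hence, substituting $q = 1$ into the two expressions for $\Ch_{q,t}(S_{n,k})$ in Theorem~\ref{s-characteristic-theorem} immediately yields the first two displayed formulas for $\Ch_t(S_{n,k})$.

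Next I would obtain the third formula, which will also exhibit $\Ch_t(S_{n,k})$ as a symmetric function. Starting from the expression $\Ch_t(S_{n,k}) = \sum_{w \in \symm_n} t^{\maj(w)} {n - \des(w) - 1 \brack k - \des(w) - 1}_t F_{\iDes(w)}$, I group the permutations by descent set. Since $\maj(w)$ and $\des(w)$ depend only on $\Des(w)$, and since $\Des(w) = \Des(\alpha)$ forces $\des(w) = \ell(\alpha) - 1$ and $\maj(w) = \maj(\alpha)$, this equals $\sum_{\alpha \models n} t^{\maj(\alpha)} {n - \ell(\alpha) \brack k - \ell(\alpha)}_t \left( \sum_{w \in \symm_n \,:\, \Des(w) = \Des(\alpha)} F_{\iDes(w)} \right)$. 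By Equation~\eqref{ChP} the parenthesized sum is the ribbon Schur function $s_{\alpha} \in \Sym$, which both gives the third formula and shows $\Ch_t(S_{n,k}) \in \Sym$. Equivalently, one may apply the commutative image map $\NSym \to \Sym$, $\sss_{\alpha} \mapsto s_{\alpha}$, which intertwines $\ch$ and $\Ch$ on projective modules because $\ch(P_{\alpha}) = \sss_{\alpha}$ and $\Ch(P_{\alpha}) = s_{\alpha}$, to the formula for $\ch_t(S_{n,k})$ in Theorem~\ref{s-characteristic-theorem}.

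Finally I would prove the Schur expansion \eqref{schur-expansion-formula}. Let $w \in \symm_n$ correspond under the Robinson--Schensted correspondence to a pair $(P(w), Q(w))$ of standard Young tableaux of a common shape $\lambda(w) \vdash n$. I will use the classical identities $\Des(w) = \Des(Q(w))$ and $\iDes(w) = \Des(w^{-1}) = \Des(P(w))$, the latter because $w^{-1}$ corresponds to $(Q(w), P(w))$. Since $\maj(w)$ and $\des(w)$ are functions of $\Des(w) = \Des(Q(w))$, the second formula for $\Ch_t(S_{n,k})$ rewrites as
\begin{equation*}
\Ch_t(S_{n,k}) = \sum_{w \in \symm_n} t^{\maj(Q(w))} {n - \des(Q(w)) - 1 \brack k - \des(Q(w)) - 1}_t F_{\Des(P(w))}.
\end{equation*}
As $w$ ranges over $\symm_n$, the pair $(P(w), Q(w))$ ranges over $\bigsqcup_{\lambda \vdash n} \SYT(\lambda) \times \SYT(\lambda)$, so I may sum first over $Q \in \SYT(\lambda)$ and then over $P \in \SYT(\lambda)$ to get $\sum_{\lambda \vdash n} \sum_{Q \in \SYT(\lambda)} t^{\maj(Q)} {n - \des(Q) - 1 \brack k - \des(Q) - 1}_t \left( \sum_{P \in \SYT(\lambda)} F_{\Des(P)} \right)$. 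The inner sum is the Gessel fundamental expansion of $s_{\lambda}$, and replacing $\lambda$ by $\shape(Q)$ produces \eqref{schur-expansion-formula}.

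All of these steps are routine manipulations of earlier results; the one place that demands care is the bookkeeping of descent-set conventions, namely matching this paper's definitions of $F_S$, $\iDes(w)$, and $\Des(Q)$ for a standard tableau $Q$ against the Robinson--Schensted identities $\Des(w) = \Des(Q(w))$, $\iDes(w) = \Des(P(w))$ and against the Gessel expansion $s_{\lambda} = \sum_{P \in \SYT(\lambda)} F_{\Des(P)}$. This is the step I would verify most carefully.
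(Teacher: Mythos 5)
Your proof is correct and follows essentially the same path as the paper's: set $q=1$ in Theorem~\ref{s-characteristic-theorem} for the first two formulas, pass to $s_\alpha$ for the third, and use the Schensted correspondence together with Gessel's expansion $s_\lambda = \sum_{P\in\SYT(\lambda)} F_{\Des(P)}$ for Equation~\eqref{schur-expansion-formula}. The only cosmetic difference is in deriving the third formula, where you group the quasisymmetric sum by descent set and invoke Equation~\eqref{ChP}, whereas the paper simply replaces $\sss_\alpha$ by $s_\alpha$ in the $\ch_t$ formula; these amount to the same calculation.
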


\begin{proof}
The first and second expressions for $\Ch_t(S_{n,k})$ follow from Theorem~\ref{s-characteristic-theorem}
by setting $q = 1$ in the expressions for $\Ch_{q,t}(S_{n,k})$ given there.  The third expression for 
$\Ch_t(S_{n,k})$ follows from replacing $\sss_{\alpha}$ by $s_{\alpha}$ in $\ch_t(S_{n,k})$.

To derive Equation~\ref{schur-expansion-formula}, we start with 
$\Ch_t(S_{n,k}) = \sum_{w \in \symm_n}  t^{\maj(w)} {n - \des(w) - 1 \brack k - \des(w) - 1}_t F_{\iDes(w)}$
and apply the Schensted correspondence.  More precisely, the (row insertion)
Schensted correspondence gives a bijection
$w \mapsto (P(w), Q(w))$ from the symmetric group $\symm_n$ to ordered pairs of standard Young tableaux
with $n$ boxes having the same shape.  
For example, we have
\begin{equation*}
25714683 \, \,  \mapsto \, \, \, \,
\begin{footnotesize}
\begin{Young}
 1 & 3 & 6 & 8 \cr
 2 & 4 & 7 \cr
 5 
\end{Young} \,, \, \,
\begin{Young}
1 & 2 & 3 & 7 \cr
4 & 5 & 6 \cr
8 
\end{Young} \, 
\end{footnotesize}.
\end{equation*}

A {\em descent} of a standard tableau $P$ is a letter $i$ which appears in a row above 
the row containing $i+1$ in $P$.
We let $\Des(P)$ denote the set of descents of $P$, and define the corresponding descent number
$\des(P) := |\Des(P)|$ and major index $\maj(P) := \sum_{i \in \Des(P)} i$.
Under the Schensted  bijection we have
$\Des(w) = \Des(Q(w))$, so that  $\des(w) = \des(Q(w))$ and $\maj(w) = \maj(Q(w))$.
Moreover, we have $w^{-1} \mapsto (Q(w), P(w))$, so that $\iDes(w) = \Des(P(w))$.

Applying the Schensted correspondence, we see that
\begin{align}
\Ch_t(S_{n,k}) &= \sum_{w \in \symm_n}  t^{\maj(w)} {n - \des(w) - 1 \brack k - \des(w) - 1}_t F_{\iDes(w)} \\
&= \sum_{(P,Q)} t^{\maj(Q)} {n  - \des(Q) - 1 \brack k - \des(Q) - 1}_t F_{\Des(P)},
\end{align}
where the second sum is over all pairs $(P,Q)$ of standard Young tableaux with $n$ boxes satisfying 
$\shape(P) = \shape(Q)$.
Gessel \cite{Gessel} proved that for any $\lambda \vdash n$,
\begin{equation}
\label{gessel-result}
\sum_{P \in \SYT(\lambda)} F_{\Des(P)} = s_{\lambda},
\end{equation}
where the sum is over all standard tableaux $P$ of shape $\lambda$.
Applying Equation~\eqref{gessel-result} gives
\begin{align}
\sum_{(P,Q)} t^{\maj(Q)} {n  - \des(Q) - 1 \brack k - \des(Q) - 1}_t F_{\Des(P)} 
&= \sum_{Q} t^{\maj(Q)} {n  - \des(Q) - 1 \brack k - \des(Q) - 1}_t \sum_{P \in \SYT(\shape(Q))} F_{\Des(P)} \\
&= \sum_Q t^{\maj(Q)} {n  - \des(Q) - 1 \brack k - \des(Q) - 1}_t s_{\shape(Q)},
\end{align}
as desired.
\end{proof}

The Schur expansion of $\Ch_t(S_{n,k})$ given in Corollary~\ref{s-characteristic-corollary}
coincides (after setting $q = t$) with Schur expansion \cite[Cor. 6.13]{HRS} of the Frobenius image
of the graded $\symm_n$-module $R_{n,k}$.  That is, we have
\begin{equation}
\label{rs-coincidence}
\Ch_t(S_{n,k}) = \grFrob(R_{n,k};t).
\end{equation}

\section{Conclusion}
\label{Conclusion}

\subsection{Macdonald polynomials and Delta conjecture}
Equation~\ref{rs-coincidence} gives a connection between our work and the theory of Macdonald polynomials.
More precisely, the {\em Delta Conjecture} of Haglund, Remmel, and Wilson \cite{HRW} predicts that 
\begin{equation}
\Delta_{e_{k-1}}' e_n = \Rise_{n,k-1}(\xx;q,t) = \Val_{n,k-1}(\xx;q,t),
\end{equation}
where $\Delta_{e_{k-1}}'$ is the Macdonald eigenoperator defined by
\begin{equation}
\Delta_{e_{k-1}}': \tilde{H}_{\mu} \mapsto e_{k-1}[B_{\mu}(q,t) - 1] \cdot \tilde{H}_{\mu}
\end{equation}
and $\Rise_{n,k-1}(\xx;q,t)$ and $\Val_{n,k-1}(\xx;q,t)$ are certain combinatorially defined
quasisymmetric functions; see \cite{HRW} for  definitions.
By the work of 
Wilson \cite{WMultiset} and Rhoades \cite{Rhoades}, we have the following consequence of the Delta Conjecture:
\begin{equation}
\label{four-ways}
\Rise_{n,k-1}(\xx;q,0) = \Rise_{n,k-1}(\xx;0,q) = \Val_{n,k-1}(\xx;q,0) = \Val_{n,k-1}(\xx;0,q).
\end{equation}
If we let $C_{n,k}(\xx;q)$ denote the common symmetric function in Equation~\ref{four-ways},
the work of Haglund, Rhoades, and Shimozono \cite[Thm. 6.11]{HRS} implies that 
\begin{equation}
\grFrob(R_{n,k}; q) = (\rev_q \circ \omega) C_{n,k}(\xx;q),
\end{equation}
where $\omega$ is the standard involution on $\Sym$ sending $h_d$ to $e_d$ for all $d \geq 0$.
Equation~\ref{rs-coincidence} implies that
\begin{equation}
\Ch_t(S_{n,k}) = (\rev_t \circ \omega) C_{n,k}(\xx;t).
\end{equation}

The derivation of $\grFrob(R_{n,k};q)$ in \cite{HRS} has a different flavor from our derivation of 
$\Ch_t(S_{n,k})$; the definition of the rings $R_{n,k}$ is extended to include family 
$R_{n,k,s}$ involving a third parameter $s$.  The $R_{n,k,s}$ rings are related to the image of the $R_{n,k}$
rings under a certain idempotent in the symmetric group algebra $\QQ[\symm_n]$; this relationship forms 
the basis of an inductive derivation of $\grFrob(R_{n,k};q)$.
The coincidence of $\Ch_t(S_{n,k})$ and $\grFrob(R_{n,k};t)$ is mysterious to the authors.

\begin{problem}
Find a conceptual explanation of the identity $\Ch_t(S_{n,k}) = \grFrob(R_{n,k};t)$.
\end{problem}

\subsection{Tanisaki ideals}
Given a partition $\lambda \vdash n$, let $I_{\lambda} \subseteq \FF[\xx_n]$ denote the corresponding 
{\em Tanisaki ideal} (see \cite{GP} for a generating set of $I_{\lambda}$).
When $\FF = \QQ$, the quotient $R_{\lambda} := {\FF[\xx_n]} / {I_{\lambda}}$ is isomorphic to the 
cohomology ring of the Springer fiber attached to $\lambda$.
The quotient $R_{\lambda}$ is a graded $\symm_n$-module.
It is well known \cite{GP} that $\grFrob(R_{\lambda};q) = Q'_{\lambda}(\xx;q)$, where
$Q'_{\lambda}(\xx;q)$ is the dual Hall-Littlewood polynomial indexed by $\lambda$.

Huang proved that $I_{\lambda}$ is closed under the action of $H_n(0)$
on $\FF[\xx_n]$ if and only if $\lambda$ is a hook, 
so that the quotient $R_{\lambda}$ has the structure of a graded 0-Hecke module for hook shapes $\lambda$
\cite[Prop. 8.2]{Huang}.
Moreover, when $\lambda \vdash n$ is a hook, \cite[Cor. 8.4]{Huang} implies that 
$\Ch_t(R_{\lambda}) = \grFrob(R_{\lambda}; t) = Q'_{\lambda}(\xx;q)$.
When $\lambda \vdash n$ is not a hook, the quotient $R_{\lambda}$ does not inherit a 
0-Hecke action.  

In this paper, we modified the ideal $I_{n,k}$ of \cite{HRS} to obtain a new ideal $J_{n,k} \subseteq \FF[\xx_n]$
which is stable under the action of $H_n(0)$ on $\FF[\xx_n]$.  Moreover, we have
$\Ch_t \left( {\FF[\xx_n]} / {J_{n,k}} \right) = \grFrob \left( {\QQ[\xx_n]} / {I_{n,k}}; t \right)$.
This suggests the following problem.

\begin{problem}
\label{tanisaki-problem}
Let $\lambda \vdash n$.  Define a homogeneous ideal $J_{\lambda} \subseteq \FF[\xx_n]$ which is stable under the 
0-Hecke action on $\FF[\xx_n]$ such that
\begin{equation}
\Ch_t \left( {\FF[\xx_n]} / {J_{\lambda}} \right) = \grFrob(R_{\lambda};t) = Q'_{\lambda}(\xx;t).
\end{equation}
\end{problem}

When $\lambda$ is a hook, the Tanisaki ideal $I_{\lambda}$ is a solution to Problem~\ref{tanisaki-problem}.

\subsection{Generalization to reflection groups}
Let $W$ be a Weyl group.  There is an action of the 0-Hecke algebra $H_W(0)$ attached to $W$
on the Laurent ring of the weight lattice $Q$ of $W$.  If $W$ has rank $r$, this Laurent ring is isomorphic
to $\FF[x_1, \dots, x_r, x_1^{-1}, \dots, x_r^{-1}]$.  Huang described the 0-Hecke structure of the corresponding
coinvariant algebra \cite[Thm. 5.3]{Huang}.
On the other hand, Chan and Rhoades \cite{CR} described a generalization of the ideal $I_{n,k}$
of \cite{HRS} for the complex reflection groups $G(r,1,n) \cong \ZZ_r \wr \symm_n$.
It would be interesting to give an analog of the work in this paper for a wider class of reflection groups.

\section{Acknowledgements}
\label{Acknowledgements}

The authors are grateful to Jim Haglund, Adriano Garsia, Jeff Remmel, Mark Shimozono, and Andy Wilson
for helpful conversations.
B. Rhoades was partially supported by NSF Grant DMS-1500838.

\end{document}